\newcommand{\cA}{\mathcal{A}}
\newcommand{\cP}{\mathcal{P}}
\newcommand{\cI}{\mathcal{I}}
\newcommand{\xr}{x^{r}}
\newcommand{\yr}{y^{r}}
\newcommand{\lr}{\lambda^{r}}
\newcommand{\xrt}{x^{r_t}}
\newcommand{\yrt}{y^{r_t}}
\newcommand{\lrt}{\lambda^{r_t}}
\newcommand{\xb}{\bar{x}}
\newcommand{\yb}{\bar{y}}
\newcommand{\lb}{\bar{\lambda}}
\newcommand{\xrn}{x^{r+1}}
\renewcommand{\st}{{\rm s.t.}}
\newcommand{\yrn}{y^{r+1}}
\newcommand{\LA}{\mathcal{L}}
\newcommand{\qadmm}{\texttt{ADMM-Q}}
\newcommand{\admmq}{\texttt{ADMM-Q}}
\newcommand{\iqadmm}{\texttt{I-ADMM-Q}}
\newcommand{\admmr}{\texttt{ADMM-R}}
\newcommand{\admms}{\texttt{ADMM-S}}
\newtheorem{theorem}{Theorem}[section]
\newtheorem{proposition}[theorem]{Proposition}
\newtheorem{lemma}[theorem]{Lemma}
\newtheorem{definition}[theorem]{Definition}
\newtheorem{remark}[theorem]{Remark}
\newtheorem{example}[theorem]{Example}
\newtheorem{assumption}[theorem]{Assumption}
\begin{document}

\runningauthor{T. Huang, P. Singhania, M. Sanjabi, P. Mitra, M. Razaviyayn}

\twocolumn[

\aistatstitle{Alternating Direction Method of Multipliers for Quantization}

\aistatsauthor{Tianjian Huang$^*$\;\; Prajwal Singhania$^\dagger$\;\; Maziar Sanjabi$^\ddag$ \;\; Pabitra Mitra$^\dagger$ \;\; Meisam Razaviyayn$^*$\\
\hspace{-0.15in}tianjian@usc.edu\;\;\;\;prajwal1210@gmail.com\;\;\; maziars@fb.com\;\;\;\; pabitra@gmail.com \;\;\;\quad  razaviya@usc.edu\bigskip}
\aistatsaddress{$^*$University of Southern California\\
$^\dagger$Indian Institute of Technology Kharagpur\\
$^\ddag$Facebook AI} 
]

\begin{abstract}
Quantization of the parameters of machine learning models, such as  deep neural networks, requires solving constrained optimization problems, where the constraint set is formed by the Cartesian product of many simple discrete sets. For such optimization problems, we study the performance of the Alternating Direction Method of Multipliers for Quantization (\admmq) algorithm, which is a variant of the widely-used ADMM method applied to our discrete optimization problem. We establish the convergence of the iterates of  \admmq{}  to certain \textit{stationary points}. In addition, our results shows that the Lagrangian function of ADMM converges monotonically. To the best of our knowledge, this is the first analysis of an ADMM-type method for problems with discrete variables/constraints. Based on our theoretical insights, we develop a few variants of  \admmq{} that can handle inexact update rules, and have improved performance via the use of ``soft projection'' and ``injecting randomness'' to the algorithm. We empirically evaluate the efficacy of our proposed approaches on two problem: 1) solving quantized quadratic optimization problems and 2) training neural networks. Our numerical experiments shows that \admmq{} outperforms other competing algorithms. 
\end{abstract}

\section{Introduction}


The fields of machine learning and artificial intelligence have experienced significant advancements in recent years. Despite this rapid growth, the extreme energy consumption of many existing machine learning models prevents their use in low-power devices.
As a solution, \textit{quantized} and \textit{binarized} training of these models have been proposed in recent years \cite{courbariaux2015binaryconnect, courbariaux2016binarized, rastegari2016xnor, 1312.6199}. This procedure requires training  a machine learning model that has low training/test error, and at the same time, low power/storage requirement. More precisely, the parameters of the machine learning model must lie in a discrete set (e.g. the weights of the neural network should be binary). The goal is to improve the energy and storage efficiency of the model by simplifying the required storage/computation in the inference phase.

To obtain accurate quantized models, a wide range of training techniques  have been proposed.
Among them, Alternating Direction Method of Multipliers (ADMM) has  recently gained popularity and resulted in training highly accurate machine learning models with super low power consumption \cite{ye2018progressive, ye2019progressive, yuan2019tp, yuan2019ultra, leng2018extremely, lin2019toward, zhang2018systematic, liu2020autocompress, ren2019admm, li2019admm}. Despite this empirical success, the theoretical understanding of  ADMM  for solving discrete optimization problems, such as training binarized neural networks, is almost non-existent. As a first step toward better understanding the behavior of this algorithm in solving discrete problems, in this paper we aim at \textit{studying the behavior of the ADMM algorithm in discrete optimization through answering the following simple yet fundamental questions}:

    \textbf{Brief Problem Description.}   Assume ADMM algorithm  is applied to a nonconvex discrete optimization problem such as training binarized/quantized neural networks. 
    \begin{itemize}
        \item Is  ADMM  guaranteed to improve the objective function over the iterates? Can we end up at a point that is worse than the initial point?
        \item What can we say about the ``limit points'' of the iterates generated by the ADMM algorithm in this discrete context?
        \item Can ADMM tolerate inexact, randomized, or stochastic computations?
        \item Is ADMM better than simple algorithms such as projected gradient descent when applied to this discrete problem? 
    \end{itemize}


The answer to the above fundamental questions is non-trivial. This lack of understanding is due to the non-monotonic behavior of the objective function through ADMM iterates as well as the highly fragile relations between the primal and dual variables in  this discrete optimization setting. 
In this paper, we (partially) answer the above questions by first showing that the \admmq{} algorithm, which is a variant of ADMM in discrete setting, indeed improves the objective over iterations. We analyze the limit points of the iterates generated by \admmq{} and show that every limit point of the iterates satisfies certain \textit{stationarity property}. 
Then, we extend our analysis to inexact and randomized update rules that happen in many practical problems such as training binarized neural networks. 
Finally, we evaluate the performance of \admmq{} and its extensions in our numerical experiments. The goal of our numerical experiments is not to obtain the best performance in a particular application or existing benchmark problems, but instead to better understand the behavior of \admmq~method. Notice that \admmq~has already been used in other papers and its efficiency (combined with other training heuristics) has been established in the literature  for different problems \cite{leng2018extremely, lin2019toward, yuan2019tp, yuan2019ultra}.
Moreover, to obtain a better understanding of \admmq, we avoid using heuristics  such as  Straight-through Estimators,
scaling factor, not binarizing last layer, playing with the architecture, which has been used in other papers \cite{bengio2013estimating, rastegari2016xnor, darabi2018bnn+, tang2017train}. 
While these heuristics (combined with exact tuning of many parameters) can significantly improve the performance of the method, they make the scientific study of the core \admmq~algorithm almost impossible by bringing a lot of other not well-understood approaches to the table. Thus, in our numerical experiments, instead of aiming for the best possible performance, obtained by using multiple heuristics, we only focus on the empirical performance of the core quantization algorithm. 


\subsection{State of the art} \label{sec:StateOfArt}
This paper studies the behavior of the ADMM algorithm when applied to nonconvex discrete optimization problems. This is closely tied to the previous studies on the ADMM algorithm and training quantized machine learning models. Here we briefly review some of the existing works in each of these two categories:

\textbf{Quantized machine learning models.} In recent years, there have been numerous works on the quantization of machine learning models---specifically neural networks. 
One of the first works towards this was BinaryConnect \cite{courbariaux2015binaryconnect} which used the ``Straight Through Estimator'' (STE)~\cite{bengio2013estimating} to provide a ``from-scratch'' training method with binary weights. BinaryNet~\cite{courbariaux2016binarized} extended upon this idea to binarize both weights and activations, replacing complex convolutions with simpler bit-wise operations and significantly reducing the computational complexity. 
These works performed very well on smaller datasets like MNIST, SVHN and CIFAR-10, and provided an important direction for compression of neural networks. However, their performance on ImageNet~\cite{deng2009imagenet} classification was  poor. XNOR-Net~\cite{rastegari2016xnor} was one of the first works to improve binarized CNNs for ImageNet classification by using scaling factors, that trade-off compression with accuracy. 
DoReFa-Net~\cite{zhou2016dorefa} further extended the idea of binarization (using the sign function) to gradients as well. They also generalized the method to create networks with arbitrary bit-widths for weights, activations and gradients. ABC-Net \cite{lin2017towards} improved upon the ideas from XNOR-Net by using multiple binary weights to approximate the full precision weights (instead of scaling factors) and using multiple binary activations. 
These changes showed  that performance like that of XNOR-Net can be achieved without the scaling factors. \cite{tang2017train} introduced seemingly small but impacting changes to improve accuracy, one of which was the use of a regularization function: $|1-W^2|$ that carried on to further works. BNN+~\cite{darabi2018bnn+} brought about yet another performance boost by careful regularization strategies and replacing the plain STE with a ``SignSwish'' activation, a modified version of the Swish-like activation~\cite{ramachandran2017searching}. \cite{yin2019understanding} provide key theoretical justification to the use of STE by showing a positive correlation between the true and the estimated ``coarse'' gradient obtained through STE chain rule. 

\textbf{ADMM algorithm.} ADMM is an optimization algorithm that combines the decomposability of dual ascent with the the superior convergence guarantees of the method of multipliers. The algorithm, which is believed to be first introduced by \cite{glowinski1975approximation} 
and \cite{gabay1976dual}, can be shown to be equivalent to the Douglas-Rachford splitting algorithm \cite{douglas1956numerical}. \cite{boyd2011distributed} provides a comprehensive overview of the method. Recently, ADMM has sparked the interests of many researchers due to its simplicity, theoretical convergence rates, and parallelization capabilities. 
The extensibility of ADMM to inexact proximal updates and non-convex problems make it appealing for a lot of problems in machine learning. 
\cite{hong2016convergence}~is perhaps the first work that extended the analysis of ADMM to nonconvex problems and showed its convergence to first-order stationary points. This analysis is later strengthened in~\cite{hong2018gradient} by showing the convergence of ADMM iterates to second-order stationary points. 
Another interesting work by \cite{wang2019global} analyzed the convergence of ADMM for nonconvex and possibly nonsmooth objectives and showed that ADMM, applied to many statistical problems, is guaranteed to converge. In the optimization society, the behavior of ADMM when applied to problems with nonconvex objective functions have also been studied in other regimes such as multiaffine constraints~\cite{gao2020admm}, dynamically changing convex constraints~\cite{zhang2020online}, finite-sum objective functions, inexact and asynchronous update rules~\cite{hong2017distributed, zhang2018proximal}, to name just a few. {\color{black} 
 \cite{NIPS2014_5612} generalizes ADMM to Bregman ADMM (BADMM), which allows the choice of different Bregman divergences to exploit the structure of problems.} ADMM has also been used as heuristics to solve mixed-integer quadratic programming. \cite{takapoui2020simple} proposed an ADMM based algorithm approximately solving convex quadratic functions over the intersection of affine and separable constraints. The Deep Learning community has been no exception to this increased interest in ADMM. 
\cite{wang2019admm} provided global convergence guarantees for an ADMM-based optimizer for deep neural networks. \cite{ye2018progressive} used ADMM to devise an effective weight-pruning technique in DNNs for better compression.

{\color{black} With the rising interest in quantization for neural network compression, several works have tried  ADMM-based approaches. \cite{leng2018extremely} were among the first to use an ADMM formulation for weight quantization (not activations) and demonstrated extremely superior results on ImageNet classification. \cite{zhang2018systematic} proposed a systematic DNN weight pruning framework using ADMM. \cite{ye2019progressive, lin2019toward, zhang2018systematic} extended the work~\cite{zhang2018systematic} by proposing a progressive multi-step approach that not only leads to a  better performance, but also can be applied to weight binarization. TP-ADMM \cite{yuan2019tp} used powerful practical improvements to break the training procedure into optimized stages and extend the formulation for binarizing both weights and activations with the state of the art results. \cite{liu2020autocompress} proposed an automatic structured pruning framework, adopting ADMM based algorithm, which boosted the compression ratio to an even higher level. Several works investigated the implementation of ADMM based weight pruning algorithms on hardware level. \cite{yuan2019ultra, ren2019admm} explored the idea of algorithm-hardware co-design framework  using ADMM. \cite{li2019admm} showed ADMM based weight pruning achieved significant
storage/memory reduction and speedup in mobile devices with negligible
accuracy degradation.  In spite of these promising empirical results, the theoretical understanding of ADMM with respect to quantization is still close to non-existent. } 

\section{Problem Formulation}


Consider the following discrete optimization problem:
\begin{equation}
\label{eq:original}
\begin{aligned}
&\underset{x}{\text{min}}
 \;\;f(x), 
\quad \st
\;\;\;\; x \in \mathcal{A}=\left\{a_1, a_2, \ldots, a_n\right\}\subseteq \mathbb{R}^d
\end{aligned}
\end{equation}
where $\mathcal{A}$ is a discrete subset of $\mathbb{R}^d$. One approach for solving this  problem is to sweep across all values in~$\mathcal{A}$ and find the optimum point. While this approach results in finding the global optimal solution(s), it is not  practical  in the quantization procedures of machine learning models. 
In particular, in this application, the  set $\mathcal{A}$ is a discrete grid defined over the space of neural network parameters. Hence, $n = |\mathcal{A}|$ is exponential in the dimension~$d$ and it is computationally impossible to sweep over all values of $\cA$. 
While the size of the set $\mathcal{A}$ can be  very large, we make an assumption that the projection to the set $\cA$ can be done efficiently. To state our assumption clearly, let us formally define the projection operator followed by two clarifying examples.

\begin{definition}
For any finite set $\mathcal{A}$, the projection of a point $x$, defined as $\mathcal{P}_{\mathcal{A}}(x)$, is a  point $x_p = \arg\min_{a\in \mathcal{A}} \|x-a\|^2$. If the set~$\arg\min_{a\in \mathcal{A}} \|x-a\|^2$ is non-singleton, we choose an element in the set with the smallest lexicographical value\footnote{We can break the tie in different ways. We can also pick one of the points in the set~$\arg\min_{a\in \mathcal{A}} \|x-a\|^2$ uniformly at random. This choice will make our results to hold with probability one.}.
\end{definition}

\begin{assumption}
\label{assumption: 2.2}
Projection to the set~$\mathcal{A}$ can be done in a computationally efficient manner.  
\end{assumption}

\begin{example} \label{Ex:Binary}
Suppose~$\cA = \{-1,+1\}^d$ in \eqref{eq:original} with $|\cA| = 2^d$. One can verify that $\cP_\cA (x) = {\rm sign}(x) = (\bar{x}_1,\ldots,\bar{x}_d) \in \mathbb{R}^d$ where $\bar{x}_i = +1$ if $x_i \geq 0$ and $\bar{x}_i = -1$ if $x_i<0$. Thus, despite the exponential size of the set $\cA$, the projection operator can be  computed efficiently.
\end{example}

\begin{example}\label{Ex:Quantized}
Assume $\cA = \{x \in  \mathbb{Z}^d\; | \; a\leq x \leq b\}$ with $a,b \in \mathbb{R}^d$ and $\mathbb{Z}$ being the set of integer numbers. Due to the Cartesian product structure of the set~$\cA$, one can verify that $\cP_\cA(x) = (\bar{x}_1,\ldots,\bar{x}_d)$ with $\bar{x}_i = b_i$ if $x_i>b_i$, $\bar{x}_i = a_i$ if $x_i<a_i$, and  $\bar{x}_i = {\rm round}(x_i)$ if $a_i \leq x_i \leq b_i$. Thus, the projection operator can be computed efficiently despite the exponential size of $\cA$.
\end{example}

The above two examples are the constraint sets that appear in the quantization/binarization of machine learning models. Next, we describe the ADMM algorithm for solving optimization problem~\eqref{eq:original}.

\section{Alternating Direction Method of Multipliers for Quantization (\admmq)}
\subsection{Review of ADMM} \label{subsec:ADMMReview}
ADMM
aims at solving linearly constrained optimization problems of the form
\begin{align}
    \min_{w,z} \;\;  h(w) + g(z)\quad \quad 
    \st \quad  Aw + Bz = c, \nonumber
\end{align}
where $w\in \mathbb{R}^{d_1}, z\in \mathbb{R}^{d_2}$, $c \in \mathbb{R}^{k}$, $A \in \mathbb{R}^{k \times d_1}$, and $B \in \mathbb{R}^{k \times d_2}$. By forming the augmented Lagrangian function 
\begin{multline*}
\mathcal{L}(w,z,\lambda) \triangleq h(w) + g(z) + \langle \lambda , Aw + Bz - c\rangle  \\+ \frac{\rho}{2} \|Aw + Bz - c\|_2^2,
\end{multline*}
each iteration of ADMM applies  alternating minimization to the primal variables and  gradient ascent  to the dual variables. More precisely, at iteration~$r$, ADMM uses the update rules:
\begin{align}
    & \textrm{Primal Update:} &&w^{r+1}  = \arg\min_{w} \mathcal{L}(w,z^r,\lambda^r),\\
    & &&z^{r+1}  = \arg\min_{z} \mathcal{L}(w^{r+1},z,\lambda^r)\nonumber\\
    & \textrm{Dual Update:} &&\lambda^{r+1}  = \lambda^r + \rho \left( Aw^{r+1} + Bz^{r+1} - c\right).  \nonumber
\end{align}
As discussed in section~\ref{sec:StateOfArt},  this algorithm has been well-studied for continuous optimization. Next, we discuss how this algorithm can be used in the discrete optimization problem~\eqref{eq:original}.

\subsection{Description of \admmq}
\label{subsec: admmq_form}
In order to apply ADMM algorithm to the quantization problem~\eqref{eq:original}, we first re-write~\eqref{eq:original} as
\begin{equation} \label{eq:ReformulationForADMM}
\min_{x} \quad f(x) + \mathcal{I}_\cA (y) \quad \quad  \st \quad x = y,
\end{equation}
where $\cI_\cA (y) = 0 $ if $y \in \cA$, and $\cI_\cA (y) = +\infty $ if $y \notin \cA$. 
Following the steps of regular ADMM in section~\ref{subsec:ADMMReview}, we can update the primal and dual variables alternatingly. The resulting algorithm, which is called Alternating Direction Method of Multipliers for Quantization (\admmq), is summarized in Algorithm~\ref{alg: ADMM exact}. The details of the derivation of this algorithm can be found in appendix~\ref{Apx:ExactAdmmDetails}. 
Step~\ref{xupdate} in this algorithm requires solving an unconstrained optimization problem. In our setting, as we will see later, when $\rho$ is chosen large enough, the function $\mathcal{L}(x,y^{r+1},\lambda^r)$ is strongly convex in $x$. Thus solving this problem is assumed to be possible for now. We later relax step~\ref{xupdate} to inexact update rule.
\begin{algorithm}[]
	\caption{\qadmm} 
	\label{alg: ADMM exact}
	\begin{algorithmic}[1]
	    \State {\textbf{Input}}: Constant~$\rho>0$; initial points $x^0 = y^0 \in \cA$, $\lambda^0  \in \mathbb{R}^d$
		\For{$r = 0,1,2,\ldots$}
		\State \textbf{Update $y$}: \quad  $y^{r+1}=\mathcal{P}_\mathcal{A}(x^r+\rho^{-1}\lambda^r)$ \label{step:ADMMQYupdate}
		\State \textbf{Update $x$}: \quad  $x^{r+1}=\arg\min_x \mathcal{L}(x, y^{r+1},\lambda^r)$ \label{xupdate}
		\State \textbf{Update $\lambda$}:\quad  $\lambda^{r+1}=\lambda^r+\rho(x^{r+1}-y^{r+1})$
		\EndFor

	\end{algorithmic}
\end{algorithm}

\subsection{Convergence Analysis of \admmq} 
\label{sec:ADMMQ}
In order to analyze the behavior of~\admmq, we make the following assumptions on $f$:
\begin{assumption}
\label{assumption: lowerbdd}
The function $f$ is lower bounded on $\mathcal{A}$. That is, $-\infty < f_{\min} \triangleq \min_{a\in\mathcal{A}} f(a)$.
\end{assumption}
\begin{assumption}
\label{assumption: lipschitz}
The function f is differentiable and its gradient is $L_f$--Lipschitz, i.e., 
$$\| \nabla f(x) - \nabla f(y)\|\leq  L_f \| x - y \|, \; \forall x, y \in \mathbb{R}^d. $$
\end{assumption}
\begin{assumption}
\label{assumption: hessianbddbelow}
There exist a constant $\mu\geq 0$ such that $f$ is $\mu$-weakly convex, i.e. $f(x)+\frac{\mu}{2}\|x\|^2$ is convex.
\end{assumption}
 When $f$ is twice continuously differentiable, it is easy to verify that $\mu\leq L_f$. However, defining these two constants separately will allow us to get tighter bounds for the cases that these two constants are different.
Let us also state a few useful lemmas that will help us understand the behavior of \admmq. The proofs of these lemmas are relegated to appendix~\ref{sec:ProofsofADMMQ}.
\begin{lemma}
\label{lemma: lowerbound}
If $\rho\geq L_f$, we have
$ 
\mathcal{L}(\xr,\yr,\lr)\geq f(\yr) \geq  f_{\min},  \quad \forall r\geq 1.$
\end{lemma}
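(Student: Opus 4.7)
The plan is to eliminate the dual variable $\lambda^r$ from the Lagrangian by exploiting the first-order optimality condition of the exact $x$-update, and then invoke the descent lemma to compare the Lagrangian value at $(x^r, y^r, \lambda^r)$ against $f(y^r)$.

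First, I would observe that the projection step~\ref{step:ADMMQYupdate} guarantees $y^r \in \mathcal{A}$ for every $r \geq 1$, so the indicator term vanishes and the augmented Lagrangian collapses to
\begin{equation*}
\mathcal{L}(x^r, y^r, \lambda^r) = f(x^r) + \langle \lambda^r, x^r - y^r \rangle + \frac{\rho}{2}\|x^r - y^r\|^2.
\end{equation*}
Next, I would write the optimality condition for step~\ref{xupdate} at iteration $r$, namely $\nabla f(x^r) + \lambda^{r-1} + \rho(x^r - y^r) = 0$, and combine it with the dual update $\lambda^r = \lambda^{r-1} + \rho(x^r - y^r)$ to obtain the key identity $\lambda^r = -\nabla f(x^r)$, valid for all $r \geq 1$. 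This gives a clean handle on $\lambda^r$ that converts the linear term in the Lagrangian into a first-order Taylor-type expression.

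Substituting this identity yields
\begin{equation*}
\mathcal{L}(x^r, y^r, \lambda^r) = f(x^r) + \langle \nabla f(x^r), y^r - x^r \rangle + \frac{\rho}{2}\|x^r - y^r\|^2.
\end{equation*}
Then I would apply the descent lemma (available via Assumption~\ref{assumption: lipschitz}) in the form $f(y^r) \leq f(x^r) + \langle \nabla f(x^r), y^r - x^r \rangle + \frac{L_f}{2}\|x^r - y^r\|^2$ and rearrange to conclude
\begin{equation*}
\mathcal{L}(x^r, y^r, \lambda^r) \geq f(y^r) + \frac{\rho - L_f}{2}\|x^r - y^r\|^2.
\end{equation*}
Choosing $\rho \geq L_f$ makes the quadratic remainder non-negative, yielding $\mathcal{L}(x^r, y^r, \lambda^r) \geq f(y^r)$, and since $y^r \in \mathcal{A}$, Assumption~\ref{assumption: lowerbdd} gives $f(y^r) \geq f_{\min}$.

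I do not anticipate a serious obstacle here. The one point requiring a little care is the exactness of the $x$-update, which is what allows the clean identity $\lambda^r = -\nabla f(x^r)$; this is legitimate in the current setting because, as noted in Section~\ref{subsec: admmq_form}, for $\rho \geq L_f \geq \mu$ the subproblem in step~\ref{xupdate} is strongly convex and admits a unique stationary point. Once this identification is in place, the remainder is a direct application of the descent lemma and the tight balance between $\rho$ and $L_f$.
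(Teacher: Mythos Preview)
Your proof is correct and follows essentially the same route as the paper: the paper isolates the identity $\lambda^r = -\nabla f(x^r)$ as a separate preliminary lemma and then, exactly as you do, substitutes it into the Lagrangian and invokes the descent lemma (Assumption~\ref{assumption: lipschitz}) together with Assumption~\ref{assumption: lowerbdd}. Your version is slightly more explicit in displaying the nonnegative remainder $\tfrac{\rho-L_f}{2}\|x^r-y^r\|^2$, but the argument is the same.
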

\begin{lemma}\label{lemma: decrease}
Define $\sigma(\rho) \triangleq \rho-\mu$.  We have 
\begin{multline}
    \mathcal{L}(x^{r+1},y^{r+1},\lambda^{r+1})-\mathcal{L}(x^r,y^r,\lambda^r)\\
    \leq (\rho^{-1}L_f^2-\frac{\sigma(\rho)}{2})\left\|x^{r+1}-x^r\right\|^2.
\end{multline}
\end{lemma}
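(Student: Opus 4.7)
The plan is to split the one-step change of $\mathcal{L}$ into three pieces corresponding to the three updates in Algorithm~\ref{alg: ADMM exact}, namely
\[
\Delta_y := \mathcal{L}(x^r,y^{r+1},\lambda^r) - \mathcal{L}(x^r,y^r,\lambda^r),
\]
\[
\Delta_x := \mathcal{L}(x^{r+1},y^{r+1},\lambda^r) - \mathcal{L}(x^r,y^{r+1},\lambda^r),
\]
\[
\Delta_\lambda := \mathcal{L}(x^{r+1},y^{r+1},\lambda^{r+1}) - \mathcal{L}(x^{r+1},y^{r+1},\lambda^r),
\]
and bound each term separately. First, I would note that the $y$-update in Step~\ref{step:ADMMQYupdate} is exactly a minimizer of $\mathcal{L}(x^r,\cdot,\lambda^r)$, because the only $y$-dependent terms in $\mathcal{L}$ are $\mathcal{I}_{\mathcal{A}}(y) + \frac{\rho}{2}\|x^r-y+\rho^{-1}\lambda^r\|^2$ up to constants, and minimizing this over $y$ yields the projection $\mathcal{P}_{\mathcal{A}}(x^r+\rho^{-1}\lambda^r)$. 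Hence $\Delta_y \le 0$.

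Next, for $\Delta_x$ I would use strong convexity of $\mathcal{L}(\cdot,y^{r+1},\lambda^r)$. Under Assumption~\ref{assumption: hessianbddbelow}, the function $f(x)+\frac{\rho}{2}\|x-y^{r+1}+\rho^{-1}\lambda^r\|^2$ (the $x$-dependent part of $\mathcal{L}$) is $\sigma(\rho)$-strongly convex whenever $\rho>\mu$. Since $x^{r+1}$ is its exact minimizer, the standard strong-convexity descent inequality gives
\[
\Delta_x \;\le\; -\tfrac{\sigma(\rho)}{2}\|x^{r+1}-x^r\|^2.
\]
For $\Delta_\lambda$, I would simply plug in the dual update: $\Delta_\lambda = \langle \lambda^{r+1}-\lambda^r, x^{r+1}-y^{r+1}\rangle = \rho^{-1}\|\lambda^{r+1}-\lambda^r\|^2$.

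The main obstacle, and the step that justifies the role of the Lipschitz assumption, is controlling $\|\lambda^{r+1}-\lambda^r\|$ by $\|x^{r+1}-x^r\|$. The standard trick is to read off an expression for $\lambda^{r+1}$ from the optimality condition of Step~\ref{xupdate}:
\[
0 = \nabla f(x^{r+1}) + \lambda^r + \rho(x^{r+1}-y^{r+1}) = \nabla f(x^{r+1}) + \lambda^{r+1},
\]
so $\lambda^{r+1} = -\nabla f(x^{r+1})$ (and similarly $\lambda^r=-\nabla f(x^r)$ for $r\ge 1$). Assumption~\ref{assumption: lipschitz} then yields $\|\lambda^{r+1}-\lambda^r\|\le L_f\|x^{r+1}-x^r\|$, whence $\Delta_\lambda \le \rho^{-1}L_f^2\|x^{r+1}-x^r\|^2$. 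Summing the three bounds $\Delta_y+\Delta_x+\Delta_\lambda$ gives exactly the claimed inequality. I would also briefly flag the boundary case $r=0$, where one should verify that the identity $\lambda^{r}=-\nabla f(x^{r})$ holds from the first $x$-update onward so that the telescoped estimate is valid for all $r\ge 0$ (possibly by reindexing or by checking the first iteration separately).
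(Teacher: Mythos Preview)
Your proposal is correct and follows essentially the same route as the paper: the paper also splits the one-step change into the $y$-update piece (nonpositive by optimality of the projection), the $x$-update piece (bounded via $\sigma(\rho)$-strong convexity of $\mathcal{L}(\cdot,y^{r+1},\lambda^r)$), and the dual piece $\rho^{-1}\|\lambda^{r+1}-\lambda^r\|^2$, which is controlled using the identity $\lambda^{r}=-\nabla f(x^{r})$ (their Lemma~\ref{lemma:lambda_r}) together with Assumption~\ref{assumption: lipschitz}. Your remark about the $r=0$ boundary case is well taken; the paper's own proof implicitly relies on $\lambda^r=-\nabla f(x^r)$, which is guaranteed only for $r\ge 1$.
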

This lemma states that by choosing $\rho$ large enough so that
$
        \rho^{-1} L_f^2-\frac{\sigma(\rho)}{2}<0,
$
we ensure the decrease of the augmented Lagrangian function at each iteration\footnote{When $f$ is convex,  $\mu= 0$ and hence~$\sigma(\rho) = \rho$.  Thus choosing $\rho>\sqrt{2}L_f$ suffices to ensure the decrease of the augmented Lagrangian function. For the general nonconvex twice differentiable functions, choosing $\rho>2L_f$ will imply that $\rho^{-1} L_f^2-\frac{\sigma(\rho)}{2}<0$, and hence  the decrease is guaranteed by Lemma~\ref{lemma: decrease}.}.
This property combined with Lemma~\ref{lemma: lowerbound} implies that $f(y^r) \leq \mathcal{L} (x^r,y^r,\lambda^r)\leq \mathcal{L} (x^0,y^0,\lambda^0) =f(y^0)$. That is, \admmq{} cannot output a point  worse than the initial point.
Next, we use these lemmas to analyze the limitting behavior of the iterates of \admmq.  To do that, let us first define the following stationarity concept.

\begin{definition}
\label{def: stationarity}
We say a point $\xb$ is a $\rho-$stationary point of the optimization problem~\eqref{eq:original} if
\begin{equation}
\nonumber
\bar{x} \in \arg\min_{a\in\mathcal{A}}\|a-(\xb-\rho^{-1}\nabla f(\xb))\|.
\end{equation}
\end{definition}
In other words, the point~$\bar{x}$ cannot be locally improved using projected gradient descent with step-size $\rho^{-1}$.   Unlike the usual definitions of stationarity for convex constraints, our definition of stationarity  depends on the constant $\rho$. Denoting the set of $\rho$-stationary solutions with $\mathcal{T}_\rho$,  it is easy to see that $\mathcal{T}_{\rho_1}\subseteq\mathcal{T}_{\rho_2}$ when $\rho_1\leq \rho_2$. Thus, in general we would want to have $\rho$ as small as possible. The following lemma justifies the definition of $\rho$-stationary.
{
\color{black}
\begin{lemma}
\label{lemma: optimality}
Assume $x^\star$ is an optimal solution to problem~\eqref{eq:original}, then $x^\star$ is a $\rho$-stationary point for any $\rho\geq L_f$.
\end{lemma}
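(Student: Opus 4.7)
The plan is to unpack the definition of $\rho$-stationarity and reduce it to a standard descent-lemma argument, using only global optimality of $x^\star$ over $\mathcal{A}$ and the $L_f$-Lipschitz gradient hypothesis (Assumption~\ref{assumption: lipschitz}).

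First I would rewrite the $\rho$-stationarity condition equivalently as: for every $a \in \mathcal{A}$,
\[
\bigl\| x^\star - (x^\star - \rho^{-1}\nabla f(x^\star)) \bigr\|^2 \;\leq\; \bigl\| a - (x^\star - \rho^{-1}\nabla f(x^\star)) \bigr\|^2.
\]
Expanding both sides and canceling the common $\rho^{-2}\|\nabla f(x^\star)\|^2$ term, this reduces to showing that for every $a \in \mathcal{A}$,
\[
\langle \nabla f(x^\star),\, a - x^\star \rangle + \frac{\rho}{2}\|a - x^\star\|^2 \;\geq\; 0.
\]
So the task boils down to verifying this inequality.

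Next I would invoke the descent lemma, which follows directly from Assumption~\ref{assumption: lipschitz}: for all $a \in \mathbb{R}^d$,
\[
f(a) \;\leq\; f(x^\star) + \langle \nabla f(x^\star),\, a - x^\star \rangle + \frac{L_f}{2}\|a - x^\star\|^2.
\]
Rearranging gives
\[
\langle \nabla f(x^\star),\, a - x^\star\rangle + \frac{L_f}{2}\|a-x^\star\|^2 \;\geq\; f(a) - f(x^\star).
\]
Since $x^\star$ is a global optimum of \eqref{eq:original} and $a \in \mathcal{A}$, the right-hand side is nonnegative. Finally, since $\rho \geq L_f$ we have $\frac{\rho}{2}\|a-x^\star\|^2 \geq \frac{L_f}{2}\|a-x^\star\|^2$, so the desired inequality holds, completing the proof.

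There is no real obstacle here; the only subtlety worth flagging is that the definition of $\rho$-stationarity tacitly uses the tie-breaking convention from Definition~2.1 when $x^\star$ is not the unique minimizer of $\|a - (x^\star - \rho^{-1}\nabla f(x^\star))\|$ over $\mathcal{A}$. Under the lexicographic tie-breaking, one should interpret "$\bar{x} \in \arg\min$" in Definition~\ref{def: stationarity} as membership in the full argmin set rather than equality to the selected representative; with the randomized tie-breaking mentioned in the footnote, the statement holds with probability one. The argument above establishes membership in the argmin, which is all that is needed.
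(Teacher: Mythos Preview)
Your proof is correct and follows essentially the same approach as the paper: both reduce $\rho$-stationarity to the inequality $\langle \nabla f(x^\star),\, a - x^\star \rangle + \frac{\rho}{2}\|a - x^\star\|^2 \geq 0$ for all $a \in \mathcal{A}$, and then verify it via the descent lemma combined with global optimality of $x^\star$. The only cosmetic difference is that the paper phrases this as a contradiction argument, whereas you argue directly.
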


Our $\rho$-stationarity definition (Definition~\ref{def: stationarity}) is a natural extension of the continuous setting. It is also closely related to stationarity defined in proximal gradient methods, e.g., see~\cite{drusvyatskiy2018error, kadkhodaie2014linear}, in particular when the proximal operator is associated with an indicator function. Note that, our $\rho$-stationary definition is a non-trivial necessary condition for optimality. Also when ${\rho<L_f}$, such stationary points may not exist. See Example~\ref{example:nonexist} in appendix~\ref{sec:ProofsofADMMQ}. 
{
\color{black}
\begin{remark}
    Because of the fact that Definition~\ref{def: stationarity} is a natural extension of the continuous case, it is straightforward to prove the convergence of Projected Gradient Descent (PGD)\footnote{Each step of PGD comprises of performing a gradient step and then projecting to the feasible set.} algorithm to such stationary set; see appendix~\ref{sec:PGDConvergence} for more details.
\end{remark}
}
\begin{theorem} \label{Thm:ADMMQConvergence}
Assume that $f$ satisfies  Assumptions~\ref{assumption: lowerbdd}, \ref{assumption: lipschitz} and \ref{assumption: hessianbddbelow}. Assume further that $\rho$ is chosen large enough so that $\rho^{-1} L_f^2-\frac{\sigma(\rho)}{2}<0$. Let $(\xb, \yb, \lb)$ be a limit point of the \admmq~algorithm. Then $\xb$ is a $\rho$--stationary point of the optimization problem~\eqref{eq:original}.
\end{theorem}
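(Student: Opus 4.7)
The plan is to show that at any limit point $(\bar{x},\bar{y},\bar{\lambda})$ of the \admmq{} iterates we have (i) $\bar{x}=\bar{y}$, (ii) $\bar{\lambda}=-\nabla f(\bar{x})$, and (iii) $\bar{y}$ achieves the projection of $\bar{x}+\rho^{-1}\bar{\lambda}$ onto $\mathcal{A}$. Once these three facts are in hand, substituting $\bar{\lambda}=-\nabla f(\bar{x})$ into the projection condition and using $\bar{x}=\bar{y}$ gives exactly $\bar{x}\in\arg\min_{a\in\mathcal{A}}\|a-(\bar{x}-\rho^{-1}\nabla f(\bar{x}))\|$, which is the desired $\rho$-stationarity.

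First, I would combine Lemma~\ref{lemma: lowerbound} and Lemma~\ref{lemma: decrease}. Since $\rho^{-1}L_f^2-\sigma(\rho)/2<0$, the sequence $\{\mathcal{L}(x^r,y^r,\lambda^r)\}$ is monotonically decreasing and bounded below by $f_{\min}$, hence convergent. Telescoping the decrease bound yields
\begin{equation*}
\sum_{r=0}^{\infty}\|x^{r+1}-x^r\|^2<\infty,
\end{equation*}
so in particular $\|x^{r+1}-x^r\|\to 0$. Next, the first-order optimality condition of the strongly convex $x$-subproblem in step~\ref{xupdate} gives $\nabla f(x^{r+1})+\lambda^r+\rho(x^{r+1}-y^{r+1})=0$, which combined with the dual update simplifies to the identity $\lambda^{r+1}=-\nabla f(x^{r+1})$. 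Using $L_f$-Lipschitz continuity of $\nabla f$, $\|\lambda^{r+1}-\lambda^r\|\le L_f\|x^{r+1}-x^r\|\to 0$, and therefore $\|x^{r+1}-y^{r+1}\|=\rho^{-1}\|\lambda^{r+1}-\lambda^r\|\to 0$ as well.

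Now take any convergent subsequence $(x^{r_k},y^{r_k},\lambda^{r_k})\to(\bar{x},\bar{y},\bar{\lambda})$. The three convergences established above immediately yield $\bar{x}=\bar{y}$ and, by continuity of $\nabla f$, $\bar{\lambda}=-\nabla f(\bar{x})$. What remains is to pass to the limit in the $y$-update. Since $\mathcal{P}_\mathcal{A}$ is not continuous (tie-breaking), I would avoid invoking continuity directly and instead use the defining inequality of the projection: for every $a\in\mathcal{A}$,
\begin{equation*}
\|y^{r_k+1}-(x^{r_k}+\rho^{-1}\lambda^{r_k})\|^2\le \|a-(x^{r_k}+\rho^{-1}\lambda^{r_k})\|^2.
\end{equation*}
Both sides are continuous in their arguments, and $y^{r_k+1}\to\bar{y}$ (using $\|y^{r+1}-x^{r+1}\|\to 0$ and $x^{r_k}\to\bar{x}$). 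Taking $k\to\infty$ preserves the inequality, so $\bar{y}\in\arg\min_{a\in\mathcal{A}}\|a-(\bar{x}+\rho^{-1}\bar{\lambda})\|$. Substituting $\bar{y}=\bar{x}$ and $\bar{\lambda}=-\nabla f(\bar{x})$ gives the $\rho$-stationarity of $\bar{x}$.

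The main obstacle is the last step: the projection operator is discontinuous on the boundary cases where multiple elements of $\mathcal{A}$ tie, so one cannot simply write $\bar{y}=\mathcal{P}_\mathcal{A}(\bar{x}+\rho^{-1}\bar{\lambda})$ by continuity. The remedy, as outlined, is to carry the \emph{inequality} characterization of the projection through the limit, since inequalities between continuous functions of the iterates are preserved under taking limits even when the argmin-selecting map is not. Everything else reduces to routine applications of the three lemmas and Lipschitz continuity.
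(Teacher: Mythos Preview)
Your proposal is correct and follows essentially the same strategy as the paper: establish $\|x^{r+1}-x^r\|\to 0$ from Lemmas~\ref{lemma: lowerbound} and~\ref{lemma: decrease}, identify $\lambda^r=-\nabla f(x^r)$ from the $x$-optimality condition plus the dual update, and then pass the projection inequality to the limit along a subsequence. The one noteworthy difference is that the paper exploits finiteness of $\mathcal{A}$ to refine the subsequence so that $y^{r_t+1}\equiv\hat y$ is constant, and then reads off $\hat y=\bar x$ from the dual update in the limit; you instead derive $\|x^{r+1}-y^{r+1}\|=\rho^{-1}\|\lambda^{r+1}-\lambda^r\|\to 0$ directly and take limits in the inequality form of the projection, which is slightly cleaner and only uses that $\mathcal{A}$ is closed (to ensure $\bar y\in\mathcal{A}$) rather than finite.
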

{
\color{black}
\begin{remark}
    The previous convergence results for non-convex ADMM, \cite{li2015global}, do not apply to our specific setting. For our problem~\eqref{eq:original}, the stationarity notion defined in equation~(4) of~\cite{li2015global} is satisfied for every feasible point since the sub-differential set of every feasible point contains~$0$ (when the feasible set is discrete and finite size). Thus, any feasible point is a stationary point according to the stationary notion in~\cite{li2015global} (see equation~(4)). Thus the convergence results and inequalities in~\cite[Theorem 1]{li2015global} would be vacuous in our setting. 
\end{remark}

}
}



{\color{black}
\begin{remark}
The convergence results presented in \cite{wang2019global} do not apply to our setting. This is due to the fact that \cite{wang2019global}~uses Lipschitz sub-minimization paths assumption (Assumption A3). If we specialize their assumption to our setting,  their assumption requires that the mappings $H(u) = \arg\min_y f(x) + \mathcal{I}_{\mathcal{A}} (y) \;\textrm{s.t.} \; y=u$ 
 and  $F(u) = \arg\min_x f(x)\;\textrm{s.t.} \; y=u $  are well-defined and Lipschitz continuous.  Clearly, Both of these assumptions do not hold in our setting due to non-convexity (and disconnected nature) of the set $\mathcal{A}$. Moreover, regarding global convergence, Theorem~1 and 2 in \cite{wang2019global}~use KL condition (after introducing indicator functions). These assumptions also do not hold in our setting.
\end{remark}}




While Theorem~\ref{Thm:ADMMQConvergence} establishes the convergence of~\admmq,  this algorithm is far from its inexact version implemented in practice. Next, we analyze the inexact version of \admmq~which is used most often in practice and in particular in training binarized neural networks. 
\section{Inexact \qadmm~(\iqadmm)}


Updating the variable~$x$ in~\qadmm~requires finding the minimizer of $\mathcal{L}(\cdot, y^{r+1}, \lambda^r)$; see step~\ref{xupdate} in Algorithm~\ref{alg: ADMM exact}.  
Although~$\mathcal{L}(\cdot, y^{r+1}, \lambda^r)$ is strongly convex when $\rho>\mu$, finding the exact minimizer might not be practically possible.  In practice, we apply iterative methods such as (stochastic) gradient descent to obtain an approximate solution $x^{r+1} \approx \arg\min_x \mathcal{L}(x ,y^{r+1}, \lambda^r)$. In this section, we show that \admmq{} algorithm converges under such an inexact update rule. More precisely, instead of the exact update rule in step~\ref{xupdate} of Algorithm~\ref{alg: ADMM exact}, we choose a $\gamma$--approximate point~$x^{r+1}$ that satisfies 
\begin{equation} \label{eq:InexactUpdate}
\|x^{r+1}-x^{r+1}_\star\|\leq \gamma \min\;\{\|x^{r+1}-y^{r+1}\|,\|x^{r+1}-x^r\| \},
\end{equation}
for some positive constant $\gamma$. Here  $x_\star^{r+1} \triangleq \arg\min_x \mathcal{L}(x ,y^{r+1}, \lambda^r)$ is the exact minimizer. 
The  resulting inexact ADMM algorithm, dubbed $\iqadmm$, is summarized in Algorithm~\ref{alg: inexact_admm}. Notice that when $\gamma = 0$, this inexact algorithm reduces to the exact \admmq{} algorithm.

\begin{algorithm}[]
 	\caption{\iqadmm  
 	} 
	\label{alg: inexact_admm}
	\begin{algorithmic}[1]
	    \State {\textbf{Input}}: Constants $\rho, \gamma>0$; initial points $x^0 = y^0 \in \cA$, $\lambda^0  \in \mathbb{R}^d$
		\For{$r = 0,1,2,\ldots $} 
		\State \textbf{Update $y$}: $y^{r+1}=\mathcal{P}_\mathcal{A}(x^r+\rho^{-1}\lambda^r)$
		\State \textbf{Update $x$} by finding a point $x^{r+1}$ satisfying~\eqref{eq:InexactUpdate} 
		\State \textbf{Update $\lambda$}: $\lambda^{r+1}=\lambda^r+\rho(x^{r+1}-y^{r+1})$
		\EndFor
	\end{algorithmic}
\end{algorithm}

Similar inexactness measures have previously been used in the literature; see, e.g., \cite{li2018federated,reddi2016aide}. Notice that since $\mathcal{L}(x,y,\lambda)$ is strongly convex in $x$, gradient descent algorithm requires only $O(\log (1/\gamma))$ iterations to find a $\gamma$-approximate solution. Hence, in practice, we do not need to run many iterations of gradient descent. Next, we present our convergence result for \iqadmm.

\begin{theorem}
Assume that $f$ satisfies  Assumptions~\ref{assumption: lowerbdd}, \ref{assumption: lipschitz} and \ref{assumption: hessianbddbelow}. Also assume that  the iterates of  \iqadmm{} are bounded, and the constant $\rho$ and $\gamma$ are chosen such that 
\begin{equation}
    \nonumber
    \frac{2L_f^2+8(\rho+L_f)^2\gamma^2}{\rho}+ \frac{\gamma^2(\rho+L_f)-(1-\gamma)^2\sigma(\rho)}{2}
    <0,
    \label{eq: inexact_cond_gamma}
\end{equation}
with $\sigma(\rho) = \rho-\mu$. Then, for any limit point $(\xb, \yb, \lb)$ of the iterates, $\xb$ is a $\rho$--stationary point of~\eqref{eq:original}.
\end{theorem}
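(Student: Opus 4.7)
The plan is to closely mirror the proof of Theorem~\ref{Thm:ADMMQConvergence}, treating the inexact optimality error in~\eqref{eq:InexactUpdate} as a perturbation that has to be absorbed into both the descent lemma and the lower bound on the augmented Lagrangian. Concretely, I will produce inexact analogues of Lemmas~\ref{lemma: lowerbound} and \ref{lemma: decrease}, combine them to show $\sum_r\|x^{r+1}-x^r\|^2<\infty$, and then pass to the limit along a convergent subsequence to recover $\rho$-stationarity.

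First I would use the first-order optimality of the exact minimizer, $\nabla f(x_\star^{r+1})+\lambda^r+\rho(x_\star^{r+1}-y^{r+1})=0$, together with the dual update $\lambda^{r+1}-\lambda^r=\rho(x^{r+1}-y^{r+1})$, Lipschitz continuity of $\nabla f$, and the inexactness bound $\|x^{r+1}-x_\star^{r+1}\|\leq\gamma\|x^{r+1}-x^r\|$ to obtain an estimate of the form $\|\lambda^{r+1}-\lambda^r\|^2 \leq (2L_f^2+8(\rho+L_f)^2\gamma^2)\|x^{r+1}-x^r\|^2$. This is the inexact counterpart of the identity $\lambda^{r+1}=-\nabla f(x^{r+1})$ that holds in the exact case, and it reproduces the first term of the hypothesis after dividing by $\rho$.

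Next I would decompose $\mathcal{L}(x^{r+1},y^{r+1},\lambda^{r+1})-\mathcal{L}(x^r,y^r,\lambda^r)$ into its $\lambda$, $y$, and $x$ increments. The $\lambda$-piece equals $\rho^{-1}\|\lambda^{r+1}-\lambda^r\|^2$ and is controlled by the estimate above. The $y$-piece is nonpositive, since $y^{r+1}$ globally minimizes $\mathcal{L}(x^r,\cdot,\lambda^r)$ by the definition of the projection. For the $x$-piece I would use $(\rho-\mu)$-strong convexity of $\mathcal{L}(\cdot,y^{r+1},\lambda^r)$ to bound $\mathcal{L}(x_\star^{r+1},y^{r+1},\lambda^r)-\mathcal{L}(x^r,y^{r+1},\lambda^r)\leq -\tfrac{\sigma(\rho)}{2}\|x_\star^{r+1}-x^r\|^2$, the reverse triangle inequality $\|x_\star^{r+1}-x^r\|\geq(1-\gamma)\|x^{r+1}-x^r\|$, and $(\rho+L_f)$-smoothness to pay a cost $\tfrac{\rho+L_f}{2}\|x^{r+1}-x_\star^{r+1}\|^2\leq\tfrac{(\rho+L_f)\gamma^2}{2}\|x^{r+1}-x^r\|^2$ for moving from the exact minimizer to $x^{r+1}$. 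Summing the three contributions reproduces exactly the coefficient in the theorem's hypothesis, so $\mathcal{L}$ decreases monotonically; combined with a lower bound obtained by repeating the argument of Lemma~\ref{lemma: lowerbound} with an extra $O(\gamma)$ error, this yields $\sum_r\|x^{r+1}-x^r\|^2<\infty$, hence $\|x^{r+1}-x^r\|\to 0$, and then $\|x^{r+1}-x_\star^{r+1}\|\to 0$ and $\|x^{r+1}-y^{r+1}\|\to 0$ via the inexactness bound and the $\lambda$-estimate.

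Finally, along a convergent subsequence $(x^{r_k},y^{r_k},\lambda^{r_k})\to(\bar x,\bar y,\bar\lambda)$ (guaranteed by the boundedness assumption on the iterates), the $y$-update passes to the limit as $\bar y=\mathcal{P}_\mathcal{A}(\bar x+\rho^{-1}\bar\lambda)$; the inexact first-order relation for the $x$-update together with continuity of $\nabla f$ gives $\bar\lambda=-\nabla f(\bar x)$; and the vanishing primal residual gives $\bar x=\bar y$, so that $\bar x=\mathcal{P}_\mathcal{A}(\bar x-\rho^{-1}\nabla f(\bar x))$, which is precisely $\rho$-stationarity. The main obstacle will be keeping the constants in the inexact descent sufficiently tight: the $\gamma$-error enters twice, once by inflating the dual residual and once by eroding the strong-convexity decrement, and both must be absorbed simultaneously. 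The precise interplay of $\rho$, $L_f$, $\mu$, and $\gamma$ captured by the hypothesis is what makes this cancellation possible, and verifying it quantitatively is essentially the entire technical content of the proof.
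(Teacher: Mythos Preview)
Your overall architecture---inexact descent lemma, inexact lower bound, then subsequence argument---matches the paper's proof. But the specific estimate you write for the dual increment is not obtainable in the one-step form you claim, and as a consequence your assertion that $\mathcal{L}$ decreases monotonically is incorrect.

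The issue is that $\lambda^r = -\nabla f(x^r) + e^r$ with $e^r \triangleq \nabla_x\mathcal{L}(x^r,y^r,\lambda^{r-1})$, and $\|e^r\|\leq(\rho+L_f)\|x^r-x_\star^r\|$ is bounded via the inexactness condition~\eqref{eq:InexactUpdate} \emph{at step $r$}, which gives $\gamma\|x^r-x^{r-1}\|$, not $\gamma\|x^{r+1}-x^r\|$. Hence $\|\lambda^{r+1}-\lambda^r\|^2$ picks up a contribution $4(\rho+L_f)^2\gamma^2\|x^{r}-x^{r-1}\|^2$ from the previous step that cannot be folded into $\|x^{r+1}-x^r\|^2$. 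The one-step descent therefore reads
\[
\mathcal{L}^{r+1}-\mathcal{L}^r \;\leq\; \alpha\,\|x^{r+1}-x^r\|^2 + \beta\,\|x^r-x^{r-1}\|^2,
\]
with $\alpha = \tfrac{2L_f^2+4(\rho+L_f)^2\gamma^2}{\rho}+\tfrac{\gamma^2(\rho+L_f)-(1-\gamma)^2\sigma(\rho)}{2}$ and $\beta=\tfrac{4(\rho+L_f)^2\gamma^2}{\rho}>0$. In general $\mathcal{L}$ is \emph{not} monotone; instead you telescope the inequality over $r=0,\ldots,T$ and use the lower bound on $\mathcal{L}$ to obtain $(\alpha+\beta)\sum_r\|x^{r+1}-x^r\|^2$ bounded below, and the theorem's hypothesis is exactly $\alpha+\beta<0$. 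Once you correct this, your conclusion $\|x^{r+1}-x^r\|\to 0$ (hence $e^{r}\to 0$, $x^{r+1}-y^{r+1}\to 0$, $\lambda^{r}+\nabla f(x^r)\to 0$) follows as you outline.

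One smaller point: ``the $y$-update passes to the limit'' needs care because $\mathcal{P}_\mathcal{A}$ is discontinuous. The paper uses finiteness of $\mathcal{A}$ to make $y^{r_t}$ and $y^{r_t+1}$ eventually constant along a refined subsequence, then passes to the limit in the \emph{inequalities} $\|y^{r_t+1}-(x^{r_t}+\rho^{-1}\lambda^{r_t})\|\leq\|a_i-(x^{r_t}+\rho^{-1}\lambda^{r_t})\|$ rather than in the projection operator itself. You should do the same.
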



One can verify that the inequality above always holds for $\rho = 6L_f$ and $\gamma \leq 0.1$. However, depending on various trade-offs, we may choose different values of $\gamma$ and $\rho$. 

\vspace{0.1cm}

\begin{remark}
    In practice, checking  condition~\eqref{eq:InexactUpdate} may  be impossible since $x_\star^{r+1}$ is not known exactly. To resolve this issue, notice that the  strong convexity of  $\mathcal{L}(\cdot, y^{r+1},\lambda^r)$ implies that $\sigma(\rho) \|x - x_\star^{r+1}\| \leq \|\nabla_x \mathcal{L}(x,y,\lambda) \|$. Hence, we can use the following checkable sufficient condition instead of \eqref{eq:InexactUpdate}:
\begin{multline*}
\| \nabla_x \mathcal{L}(x^{r+1},y^{r+1},\lambda^r) \| \\
\leq \rho \gamma \min\;\{\|x^{r+1}-y^{r+1}\|,\|x^{r+1}-x^r\| \}.
\end{multline*}

\end{remark}


\section{Injecting Randomness to the Algorithm}


The analyses in the previous sections only show that the algorithm converges to a stationary solution of the form defined in Definition \ref{def: stationarity}. 
As mentioned earlier,  our stationary set  includes more points as $\rho$ increase. Thus, to obtain a  point satisfying stronger stationary condition, we need to pick the smallest possible~$\rho$. 
However, reducing the value of $\rho$ beyond certain value results in instability and divergence in \qadmm, as suggested by our theory and  numerical experiments. 
Another approach that has been utilized in practice to escape  spurious stationary solutions is the use of randomness/noise in the algorithm~\cite{jin2017escape, lu2019snap, xu2018first, allen2018neon2, barazandeh2018behavior, lu2019pa}. 
In order to inject randomness to our algorithm, we propose the following step at each iteration~$r$: draw a set of (potentially correlated) Bernoulli random variables $m^r= \{m^r_1, m^r_2, \ldots, m^r_d\}$. 
Each $m_i^r$, corresponds to the coordinate $i$ in vector $y$ with ${\text{Prob}(y_i^r=1) = p_i^r>0}$. Then, we update $y_i$ in iteration $r$ if and only if $m_i^r=1$. This variant of \admmq, which we denote by \admmr, is presented in Algorithm~\ref{alg: ADMM random}. 
The convergence result of this algorithm is stated in Theorem~\ref{thm:ConvRandom}. The proof of this result follows the same steps as in the ones in Theorem~\ref{Thm:ADMMQConvergence}, and hence we omit the proof here.
 
\begin{algorithm}[]
	\caption{\admmr} 
	\label{alg: ADMM random}
	\begin{algorithmic}[1]
	    \State {\textbf{Input}}: Constants $\rho, \gamma>0$; initial points $x^0 = y^0 \in \cA$, $\lambda^0  \in \mathbb{R}^d$; the sequence $\{p_i^r\}_{i,r}\geq \alpha>0$.
		\For{$r = 0,1,2,\ldots $} 		
		\State \textbf{Generate $m$}: $m^r= \{m^r_1, m^r_2, \ldots, m^r_d\}$
		\State \textbf{Compute $\hat{y}$}: $\hat{y}^{r+1}=\mathcal{P}_\mathcal{A}(x^r+\rho^{-1}\lambda^r)$
		\State \textbf{Update $y$}: $y_i^{r+1}=m_i^r\hat{y_i}^{r+1}+(1-m_i^r) y_i^r$, \; $\forall~i = 1,\ldots,d$
		\State \textbf{Update $x$}: $x^{r+1}=\arg\min_x \mathcal{L}(x, y^{r+1},\lambda^r)$
		\State \textbf{Update $\lambda$}: $\lambda^{r+1}=\lambda^r+\rho(x^{r+1}-y^{r+1})$
		\EndFor
	\end{algorithmic}
\end{algorithm}


\begin{theorem}
\label{thm:ConvRandom}
    Assume that the constraint set~$\mathcal{A}$ in~\eqref{eq:original} is a Cartesian product of simple coordinate-wise sets of scalers. Then, under the same set of assumptions as in Theorem~\ref{Thm:ADMMQConvergence}, every iterate of the \admmr ~algorithm is a $\rho$-stationary point of~\eqref{eq:original}.
\end{theorem}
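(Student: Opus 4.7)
The plan is to mirror the proof of Theorem~\ref{Thm:ADMMQConvergence} step by step, taking care of the two modifications introduced by \admmr: the partial/random $y$-update, and the consequent need to argue that the limit satisfies a projection identity despite the random skipping. The Cartesian product assumption on $\mathcal{A}$ is what lets the first modification go through.

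First I would reverify that Lemmas~\ref{lemma: lowerbound} and~\ref{lemma: decrease} hold for \admmr's iterates. Lemma~\ref{lemma: lowerbound} uses only the first-order optimality of the $x$-update together with the $L_f$-smoothness of $f$ to relate $\lambda^{r+1}$ to $-\nabla f(x^{r+1})$, which is oblivious to how $y^{r+1}$ was formed; the bound $\mathcal{L}(x^r,y^r,\lambda^r)\geq f(y^r)\geq f_{\min}$ is unchanged. For Lemma~\ref{lemma: decrease}, the delicate piece is the ``$y$-descent'' inequality $\mathcal{L}(x^r,y^{r+1},\lambda^r)\leq \mathcal{L}(x^r,y^r,\lambda^r)$. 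Under the Cartesian product assumption, the function
\[
y\mapsto \mathcal{I}_\mathcal{A}(y)+\langle \lambda^r,x^r-y\rangle+\tfrac{\rho}{2}\|x^r-y\|^2
\]
is separable across coordinates. For each $i$, the partial update replaces $y_i^r$ either with $y_i^r$ itself (if $m_i^r=0$) or with $\hat y_i^{r+1}=\mathcal{P}_{\mathcal{A}_i}((x^r+\rho^{-1}\lambda^r)_i)$ (if $m_i^r=1$), i.e., the per-coordinate minimizer; in either case the per-coordinate summand does not increase. Summing over $i$ yields the required descent; the $x$- and $\lambda$-pieces of the proof of Lemma~\ref{lemma: decrease} are identical to the original.

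With both lemmas in hand, the telescoping and summability arguments from the proof of Theorem~\ref{Thm:ADMMQConvergence} give $\|x^{r+1}-x^r\|\to 0$, and therefore $\|\lambda^{r+1}-\lambda^r\|=\rho\|x^{r+1}-y^{r+1}\|\to 0$ and $\bar x=\bar y$ at any limit point. Passing to the limit in the $x$-update's first-order condition yields $\bar\lambda=-\nabla f(\bar x)$, exactly as in the exact case.

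The only genuinely new step is to show that $\bar y=\mathcal{P}_\mathcal{A}(\bar x+\rho^{-1}\bar\lambda)$ along a convergent subsequence. Because $p_i^r\geq \alpha>0$ for every $i$ and $r$, a second Borel--Cantelli / independent-trials argument (iterations are independent even if coordinates within an iteration are correlated) implies that, almost surely, each coordinate $i$ is updated infinitely often. Restricting to an index subsequence along which $(x^r,y^r,\lambda^r)\to(\bar x,\bar y,\bar\lambda)$ and then, for each $i$, extracting a further subsequence on which $m_i^r=1$, we get $y_i^{r+1}=\hat y_i^{r+1}=\mathcal{P}_{\mathcal{A}_i}((x^r+\rho^{-1}\lambda^r)_i)$. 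Since $y_i^{r+1}-y_i^{r}\to 0$ (a consequence of $\|x^{r+1}-y^{r+1}\|\to 0$ together with $\|x^{r+1}-x^r\|\to 0$), taking limits and using coordinate-wise closedness of the projection operator yields $\bar y_i=\mathcal{P}_{\mathcal{A}_i}((\bar x+\rho^{-1}\bar\lambda)_i)$. Assembling coordinates gives $\bar y=\mathcal{P}_\mathcal{A}(\bar x+\rho^{-1}\bar\lambda)$, and substituting $\bar y=\bar x$ and $\bar\lambda=-\nabla f(\bar x)$ produces $\bar x\in\arg\min_{a\in\mathcal{A}}\|a-(\bar x-\rho^{-1}\nabla f(\bar x))\|$, which is precisely $\rho$-stationarity (Definition~\ref{def: stationarity}).

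The main obstacle is the probabilistic coordinate-recurrence step: one must be careful that ``potentially correlated'' in the statement refers to correlation across $i$ within a single iteration, not across iterations $r$, so that Borel--Cantelli applies. Once that is granted, the rest is a direct port of the deterministic proof, which is why the authors omit the details.
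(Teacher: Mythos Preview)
Your proposal is correct and matches the paper's intent: the paper omits the proof entirely, stating only that it ``follows the same steps as in the ones in Theorem~\ref{Thm:ADMMQConvergence},'' and your write-up is precisely that port, with the two genuinely new ingredients (coordinate-wise separability of the $y$-subproblem under the Cartesian-product assumption to preserve Lemma~\ref{lemma: decrease}, and a Borel--Cantelli recurrence argument to recover the projection identity at the limit) correctly identified and handled. Note that the theorem as printed says ``every iterate,'' which is evidently a slip for ``every limit point''; you have proved the latter, which is the only sensible reading.
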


Notice that the convergence of this algorithm requires that the  set $\mathcal{A}$ to be of  the Cartesian product form. 
This assumption is necessary since the coordinates of $y$ is updated separately; see~\cite{powell1973search, bertsekas1997nonlinear, razaviyayn2013unified} for necessity of such an assumption in the presence of coordinate-wise update rule. Having said that, the constraint sets in the quantization context satisfy this assumption as illustrated in Example~\ref{Ex:Binary} and Example~\ref{Ex:Quantized}.



\section{\admmq{} with Soft Projection (\admms)}


Step~\ref{step:ADMMQYupdate} in \admmq{} algorithm requires projection to the discrete set~$\cA$. Such a projection is non-continuous which may result in instabilities in the algorithm. As a solution, we can use ``soft projection'' in ADMM algorithm. To obtain such soft projections, we start by replacing the indicator function $\mathcal{I}_\cA(\cdot)$ in the objective function with a soft indicator function defined below.



\begin{definition}
Given a finite set $\mathcal{A} \subseteq \mathbb{R}^d$, we define the Soft Indicator Function $\mathcal{S}_\cA: \mathbb{R}^d \mapsto \mathbb{R}$ as
$$\mathcal{S}_\cA(x)=\displaystyle{\min_{a\in\cA}}\|x-a\|_2.$$
\end{definition}
Replacing the indicator function~$\cI_\cA(\cdot)$ with the soft indicator function $\mathcal{S}_\cA$ in \eqref{eq:ReformulationForADMM}, we obtain 
\[
\min_{x} \quad f(x) + \beta \mathcal{S}_\cA (y) \quad \st \quad x = y,
\]
where $\beta>0$ is some given constant. Following the steps of ADMM, we obtain the ADMM algorithm with soft projections (\admms), which is summarized in Algorithm~\ref{alg: ADMM soft}. The details of the derivation of this algorithm is summarized in appendix~\ref{app: derivation_algo_4}.


\begin{algorithm}[]
	\caption{\admms} 
	\label{alg: ADMM soft}
	\begin{algorithmic}[1]
	    \State {\textbf{Input}}: Constant~$\rho>0$, $\beta>0$; initial points $x^0 = y^0 \in \cA$, $\lambda^0  \in \mathbb{R}^d$ 
		\For{$r = 0,1,2,\ldots $} 	
		\State \textbf{Compute}: $z^{r+1}=x^r+ \rho^{-1} \lambda^r$,  $\widetilde{z}^{r+1}=\mathcal{P}_\cA(z^{r+1})$ and $z_d=\widetilde{z}^{r+1}-z^{r+1}$
		\State \label{y_update_soft} \textbf{Update $y$}: \\ \quad\quad $y^{r+1}=\left\{
		\begin{array}{cl}
		z^{r+1} + \dfrac{\rho^{-1}\beta z_d}{\|z_d\|_2} &  ,\;\rho^{-1}\beta \leq \|z_d\|_2  \\
		\widetilde{z}^{r+1} & ,\;\rho^{-1}\beta >  \|z_d\|_2
		\end{array}
		\right.$ 
		\State \textbf{Update $x$}: $x^{r+1}=\arg\min_x \mathcal{L}(x, y^{r+1},\lambda^r)$
		\State \textbf{Update $\lambda$}: $\lambda^{r+1}=\lambda^r+\rho(x^{r+1}-y^{r+1})$
		
		\EndFor

	\end{algorithmic}
\end{algorithm} 
As shown in appendix~\ref{app: derivation_algo_4}, this algorithm coincides with \admmq{} if $\beta$ is chosen large enough. However, for small values of $\rho$, this algorithm results in a different trajectory. In this case, while the iterates of the algorithm does not necessarily converge to the set~$\cA$,  the $y$ iterates are kept close to set $\cA$. Moreover, as shown in appendix~\ref{app: derivation_algo_4}, the augmented Lagrangian function  is monotonically decreasing and it converges. Finally, we would like to mention that similar hard and soft indicators have been used before for sparse signal recovery through soft and hard thresholding operators~\cite{donoho1995noising, blumensath2008iterative}.




\section{Numerical Experiments}
\label{sec: numerical_experiments}


We empirically evaluate the performance of the proposed algorithms in the following two problems:  1) Solving quadratic optimization problems with integer constraints. 2) Training  quantized neural networks. {\color{black}The link to code is available in appendix~\ref{app:code}.} 



\subsection{Numerical Experiment on Quadratic Optimization with Integer Constraints}

In this experiment, we use the presented algorithms (\admmq~and its variants) to solve the optimization problem
\begin{equation}
    \min_{x} \;\; \frac{1}{2}x^\top Q x + b^\top x \quad \st \;\; x\in \cA \triangleq v\mathbb{Z}^d,
\end{equation}
for some given $Q\in \mathbb{R}^{d\times d}$, $b\in \mathbb{R}^{d}$, and $v\in \mathbb{Z}^+$. Here, the constraint set enforces that the solution should be an integer number which is a multiple of $v$.
We generate matrix $Q$ via the equation~$Q = \widetilde{Q}^\top \widetilde{Q} + \widetilde{q} \widetilde{q}^\top$,
where $\widetilde{Q}_{ij}\sim N(0, 1),\;\widetilde{q}_{i}\sim N(0,\sigma_{\widetilde{q}}^2),\;1\leq i, j\leq d$. Note that the Lipschitz constant of the objective function (parameter~$L_f$ in the previous sections) can be adjusted through changing $\sigma_{\widetilde{q}}^2$.
We compare the performance of projected gradient gradient descent~(PGD), GD+Proj, \admmq, \admms, and \admmr~for different values of $d$ and $\sigma_{\widetilde{q}}^2$ (see appendix~\ref{app: simulation_quad_all} for more details). The PGD algorithm is defined through the iterative update rule~$x^{r+1} = \cP_{\cA} (x^r - \rho^{-1} \nabla f(x^r))$. The ``GD+Proj'' algorithm, runs gradient descent to find the global optimum of unconstrained problem, then it projects the final solution onto the feasible set $\cA$.

For each problem instance, we run each algorithm initialized at the same random point for 30,000 iterations (except 100,000 iterations for PGD to make sure it is convergent). 
The best objective value over the last 50 iterations of the algorithm will be recorded as the result of each run. We repeated this procedure for 50 different initilizations, and compute the median, $25\%$ quartile and $75\%$ quartile over 50 runs. 
We use the best hyper-parameter for each algorithm by median, and report the median, $25\%$ quartile and $75\%$ quartile. The list of hyper parameters used can be found in appendix~\ref{app: simulation_quad_all}. 

\textbf{Results.}
We only report our results for $(v,d,\sigma_{\widetilde{q}}^2)=(8,16,30)$ here. More simulations can be found in appendix~\ref{app: simulation_quad_all}.
Figure~\ref{fig:Quantiles} shows the performance of the studied algorithms for five different problem instances. Each point on x-axis represents one problem instance; and y-axis is the final obtained objective value. As expected, \admmq~outperforms PGD and GD+Proj with   large margins. 
We also observe that both \admms{} and \admmr{} have better median final objective values than \admmq. In addition, the final objective value  has a smaller  variance in these two algorithms. More importantly, the median tends to overlap with the $25\%$ quantile, i.e., the objective of at least 25 runs are almost the same as the minimum objective over 50 runs. 
\begin{figure}[]
    \centering
    \resizebox{0.355\textwidth}{!}{%
    \includegraphics[] {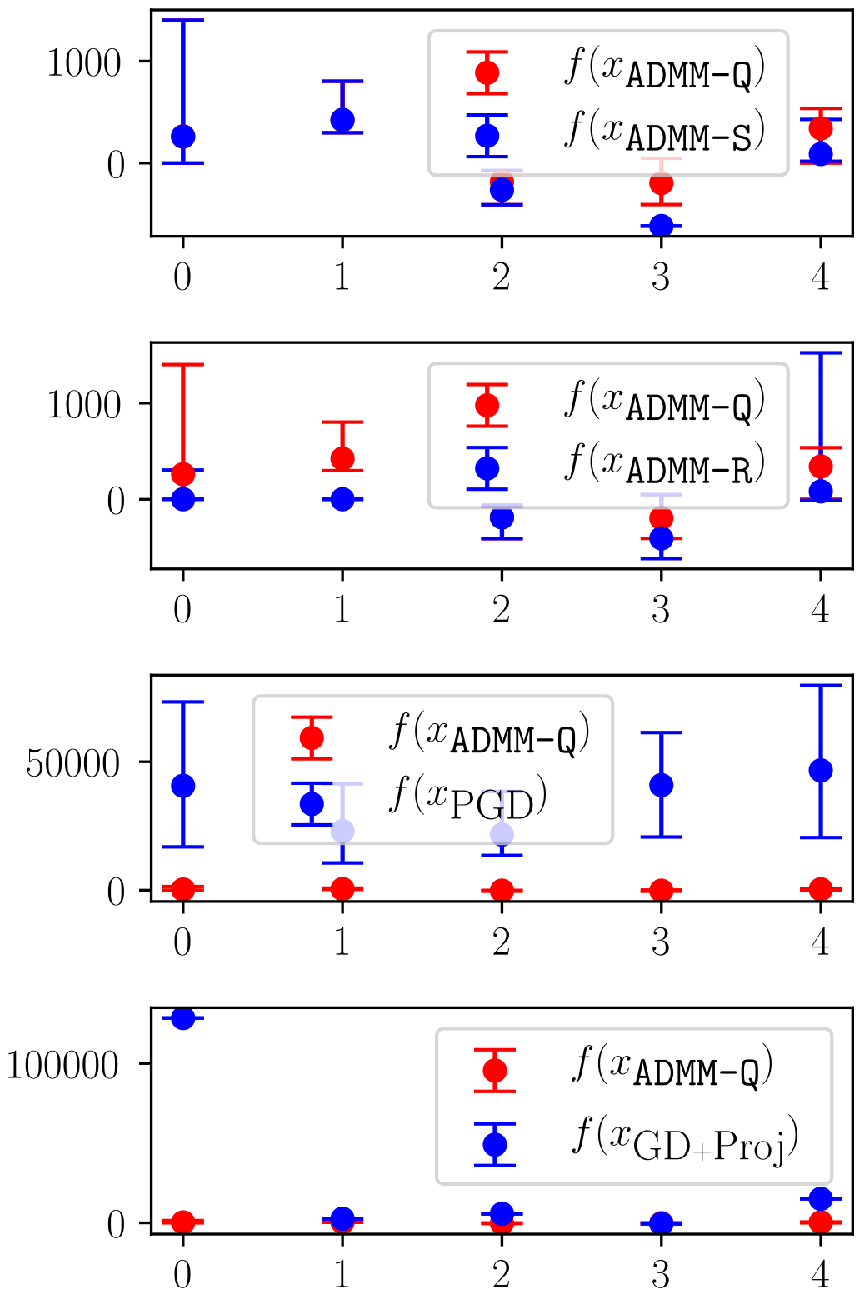}
    }
    \caption{Performance of \admmq, \admms, \admmr~ and PGD on different problem instances}
    \label{fig:Quantiles}
\end{figure}

To better understand the performance gap between different algorithm for the same initialization, we conducted one additional experiment: we generated 5 instance of $Q$ and $b$, and for each instance we ran 50 different random initialization, resulting in 250 total runs. We recorded the final  objective value by each algorithm.  Then we   computed the differences between the objective values obtained by two algorithms for the same initialization. We plot the histograms of these  differences in~Figure~\ref{fig:hist_all}. In this plot, $f(x_{\admmq})$ denotes the final objective value obtained by \admmq{} algorithm (similar notation is used for other algorithms). 
Our histogram plot suggests that \admms{} and \admmr{} outperform \admmq{} for almost all 250 runs. It also shows that PGD performs much worse than \admmq{} or its variants. We also observe that \admmr{} slightly outperforms \admms.


\begin{figure}
    \centering
    \resizebox{0.355\textwidth}{!}{%
    \includegraphics[]{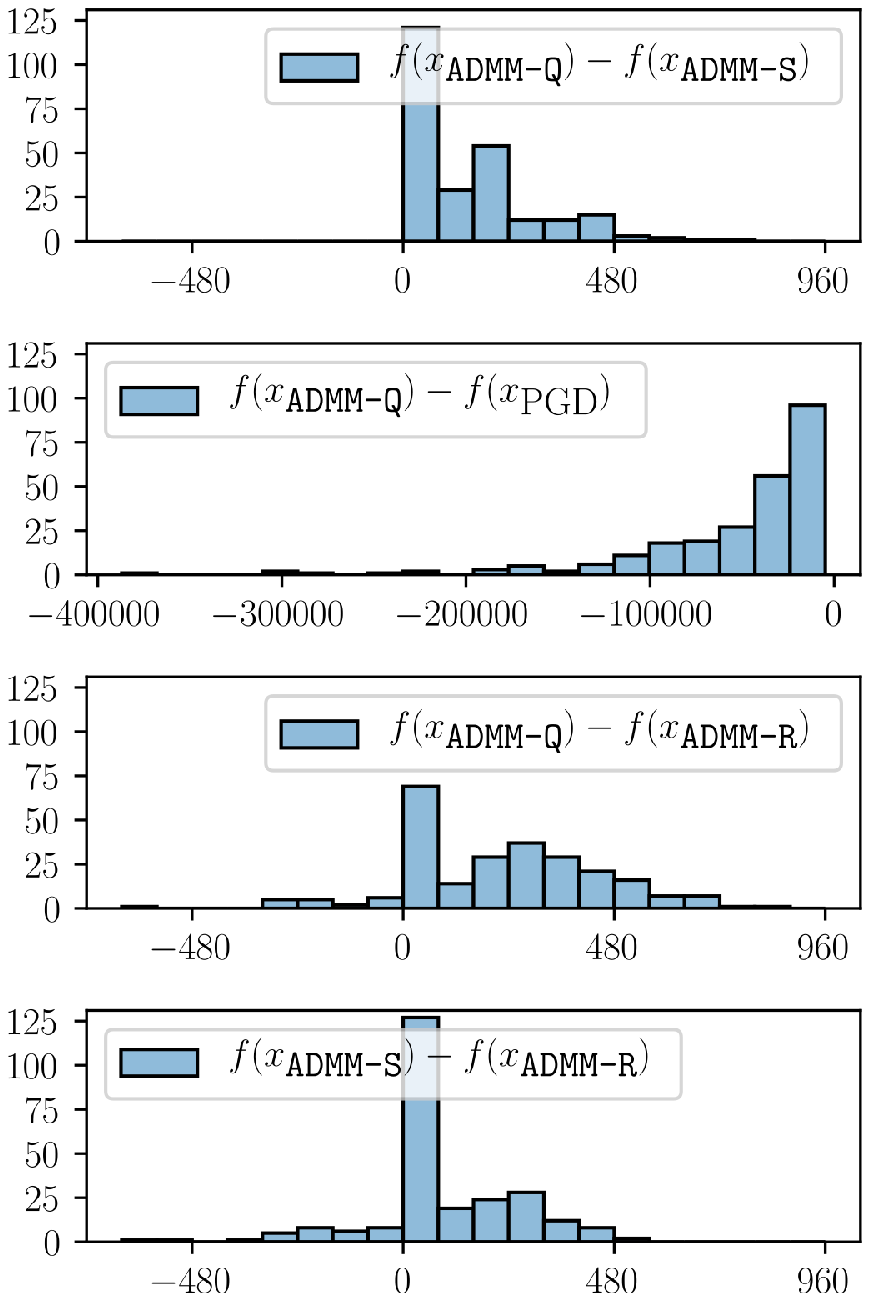}
    }
    \caption{Histogram of the difference of obtained objective values for different algorithm pairs}
    \label{fig:hist_all}
\end{figure}

{\color{black}

\subsection{Neural Network Binarization}
While  ADMM algorithm has been extremely successful in binarization and pruning of neural networks \cite{ye2018progressive, ye2019progressive, yuan2019tp, yuan2019ultra, leng2018extremely, lin2019toward, zhang2018systematic, liu2020autocompress, ren2019admm, li2019admm}, most of these works combine ADMM with other heuristics. To understand the behavior of the ADMM algorithm (independent of  other heuristics), here  we study the performance of pure \admmq~and its variants (with no additional heuristics) when used for binarizing neural networks trained on MNIST and CIFAR-10 datasets.

\subsubsection{MNIST}
\label{subsec : MNIST}

The MNIST dataset~\cite{lecun1998gradient} consists of $28\times 28$ arrays of grayscale pixel images classified into 10 handwritten digits. It includes $60,000$ training images and $10,000$ testing images. The task here is to train a binary-weighted classifier to recognized hand written digits, which can be formulated as
\begin{equation}
    \min_{W}\; \frac{1}{N} \sum_{i=1}^{N}\ell(f(W;x_i),y_i)\;\; \st\;\; W \in \left\{-1,+1\right\}^{d}
\end{equation}
where $(x_i,y_i)$ is the $i$-th training sample; $x_i$ is the input image; $y_i$ is the label; $W$ represents the weights of the network. The work in~\cite{courbariaux2015binaryconnect} used ``Straight Through Estimator'' to binarize the network and reached the accuracy level of the full-precision network. We repeat the experiment with the same network as~\cite{courbariaux2015binaryconnect} and apply \admmq{} and its variants. Similar to the quadratic case, we also compare the performance with PGD and GD+Proj. We conduct two sets of experiments: with pretraining and without pretraining. To the best of our knowledge, all the ADMM-based approaches~\cite{ye2018progressive, ye2019progressive, yuan2019tp, yuan2019ultra, leng2018extremely, lin2019toward, zhang2018systematic, liu2020autocompress, ren2019admm, li2019admm} start from a pre-trained full-precision network.  However, in order to solely study the performance of ADMM-based methods (and not additional modules around it), we avoid using pre-training in some of our experiments. We also did not use any popular heuristics and we relies on implementing our plain ADMM-based algorithms.

To remove the effect of random initialization, we run each algorithm for 5 times and record the mean and standard deviation of the testing accuracy. For algorithms with pre-training, we pre-train the model with full precision and then apply the algorithm. 
Training parameters and network structures be found in Table~\ref{tab:training_para_mnist} and  Table~\ref{tab:net-arch-mnist} in the appendix. Adam optimizer is used for all algorithms. Note that in step~\ref{step:ADMMQYupdate} of algorithm~\ref{alg: ADMM exact}, it is required to solve a minimization problem, which is not always tractable in practice. Thus, here we apply 5 epochs of Adam update on $W$.

\textbf{Results.} Table~\ref{tab:Mnist-simple-network} shows that plain \admmq~and its variants have comparable results with BinaryConnect~\cite{courbariaux2015binaryconnect}. {\color{black} Binarizing the weights saves the storage as much as $96.78\%$ (See Table~\ref{tab:storage-saving}).} One substantial difference between \cite{courbariaux2015binaryconnect} and the proposed work is that we do not use any heuristics and the proposed algorithm enjoys theoretical guarantees. Note that for \admmq{} without pre-training, we fix a value of $\rho$ and keep it until the end of the training process. We observed that one can indeed use ``scheduling'' for  parameter~$\rho$, i.e., increasing it gradually, to shorten the training time. It is worth mentioning that pre-training (with non-binarized weights) in fact further improves the performance of ADMM-based methods (see appendix~\ref{app: simulation_nn_all}). 

\begin{table}[]
\centering
\begin{tabular}{@{}lc@{}}
\toprule
Algorithm      & Accuracy \\ \midrule
{\color{black}BinaryConnect\tablefootnote{BinaryConnect~\cite{courbariaux2015binaryconnect}}}   &     $98.71\%$     \\
Full Precision & $98.87\pm 0.04\%$  \\
GD+Proj   &     $74.92\pm 4.83\%$     \\
\midrule
PGD   & $92.73\pm 0.23\%$         \\ 
\admmq   & $98.21 \pm 0.16\%$         \\ 
\admmr   & \textcolor{black}{$97.78 \pm 0.23\%$}         \\ 
\admms   & $98.21 \pm 0.07\%$         \\ 
\bottomrule
\end{tabular}
\caption{Testing accuracies for MNIST dataset 
}
\label{tab:Mnist-simple-network}
\end{table}

\subsubsection{CIFAR-10}
\label{subsec : CIFAR}

The CIFAR-10 dateset~\cite{krizhevsky2009learning} is a collection of images widely used to train machine learning models. It consists of $32 \times 32$ sized RGB images classified into 10 mutually exclusive categories: airplane, automobile, bird, cat, deer, dog, frog, horse, ship, and truck. The dataset consists of 50,000 training images and 10,000 testing images. Similar to the MNIST experiments, the task is to build a binary-weighted network for classifying the images.

We repeat run our algorithms to train neural networks on CIFAR-10 dataset with and without pretraing. For this experiment, we use  Resnet-18~\cite{he2016deep} architecture. The hyper-parameters used in our experiments are summarized in Table~\ref{tab:training_para_cifar-10}. Adam optimizer is used for all algorithms; We apply 25~epochs of Adam updates on $W$ to solve the minimization problem in step~\ref{step:ADMMQYupdate} of algorithm~\ref{alg: ADMM exact}.

\paragraph{Results.} {\color{black}Table \ref{tab:CIFAR-results} shows the results of the experiments on CIFAR-10. Binarizing the weights saves the storage of up to $96.79\%$ (See Table~\ref{tab:storage-saving}).} Progressive DNN~\cite{ye2019progressive} is the state-of-the-art result which has multiple re-training heuristics involved. One thing worth mentioning here is that we do not use any heuristics. We only use diminishing step-size and increasing rho during the training procedure which is standard. We can see that without pretraining, the results of ADMM-type algorithm are much better than PGD which is consistent with the observations in MNIST and quadratic experiments. \admmr~and \admms~slightly outperform \admmq. Pretraining can further improve the performances of ADMM-type algorithms, making them comparable with the full-precision network (see appendix~\ref{app: simulation_nn_all}).

\begin{table}[]
\centering
\begin{tabular}{@{}lc@{}}
\toprule
Algorithm      & Accuracy \\ \midrule
Progressive DNN\tablefootnote{Progressive DNN~\cite{ye2019progressive}}  &     $93.53\%$     \\
Full Precision & $93.06\%$  \\
GD+Proj   &     $9.86\%$     \\\midrule
PGD   & $63.53\%$         \\ 
\admmq   & $82.74\%$         \\ 
\admmr   & $84.87\%$         \\ 
\admms   & $84.72\%$         \\ 
\bottomrule
\end{tabular}
\caption{Testing accuracies for CIFAR-10 dataset}
\label{tab:CIFAR-results}
\end{table}
}

\begin{table}[]
\color{black}
\centering
\begin{tabular}{@{}lcc@{}}
\toprule
 & Full-precision & Binary \\ \midrule
MNIST & 140.55 MB & 4.53 MB \\
CIFAR-10 & 53.53 MB & 1.72 MB \\ \bottomrule
\end{tabular}
\caption{The storage savings of binarized neural networks}
\label{tab:storage-saving}
\end{table}


\section*{Acknowledgments}
The authors would like to thank Mingyi Hong (University of Minnesota) for the fruitful discussions and invaluable comments that improved the quality of this paper.

\bibliographystyle{abbrvnat}

\bibliography{references.bib}

\begin{thebibliography}{55}
\providecommand{\natexlab}[1]{#1}
\providecommand{\url}[1]{\texttt{#1}}
\expandafter\ifx\csname urlstyle\endcsname\relax
  \providecommand{\doi}[1]{doi: #1}\else
  \providecommand{\doi}{doi: \begingroup \urlstyle{rm}\Url}\fi

\bibitem[Allen-Zhu and Li(2018)]{allen2018neon2}
Z.~Allen-Zhu and Y.~Li.
\newblock Neon2: Finding local minima via first-order oracles.
\newblock In \emph{Advances in Neural Information Processing Systems}, pages
  3716--3726, 2018.

\bibitem[Barazandeh and Razaviyayn(2018)]{barazandeh2018behavior}
B.~Barazandeh and M.~Razaviyayn.
\newblock On the behavior of the expectation-maximization algorithm for mixture
  models.
\newblock In \emph{2018 IEEE Global Conference on Signal and Information
  Processing (GlobalSIP)}, pages 61--65. IEEE, 2018.

\bibitem[Bengio et~al.(2013)Bengio, L{\'e}onard, and
  Courville]{bengio2013estimating}
Y.~Bengio, N.~L{\'e}onard, and A.~Courville.
\newblock Estimating or propagating gradients through stochastic neurons for
  conditional computation.
\newblock \emph{arXiv preprint arXiv:1308.3432}, 2013.

\bibitem[Bertsekas(1997)]{bertsekas1997nonlinear}
D.~P. Bertsekas.
\newblock Nonlinear programming.
\newblock \emph{Journal of the Operational Research Society}, 48\penalty0
  (3):\penalty0 334--334, 1997.

\bibitem[Blumensath and Davies(2008)]{blumensath2008iterative}
T.~Blumensath and M.~E. Davies.
\newblock Iterative thresholding for sparse approximations.
\newblock \emph{Journal of Fourier analysis and Applications}, 14\penalty0
  (5-6):\penalty0 629--654, 2008.

\bibitem[Boyd et~al.(2011)Boyd, Parikh, Chu, Peleato, and
  Eckstein]{boyd2011distributed}
S.~Boyd, N.~Parikh, E.~Chu, B.~Peleato, and J.~Eckstein.
\newblock Distributed optimization and statistical learning via the alternating
  direction method of multipliers.
\newblock \emph{Foundations and Trends{\textregistered} in Machine learning},
  3\penalty0 (1):\penalty0 1--122, 2011.

\bibitem[Courbariaux et~al.(2015)Courbariaux, Bengio, and
  David]{courbariaux2015binaryconnect}
M.~Courbariaux, Y.~Bengio, and J.-P. David.
\newblock Binaryconnect: Training deep neural networks with binary weights
  during propagations.
\newblock In \emph{Advances in neural information processing systems}, pages
  3123--3131, 2015.

\bibitem[Courbariaux et~al.(2016)Courbariaux, Hubara, Soudry, El-Yaniv, and
  Bengio]{courbariaux2016binarized}
M.~Courbariaux, I.~Hubara, D.~Soudry, R.~El-Yaniv, and Y.~Bengio.
\newblock Binarized neural networks: Training deep neural networks with weights
  and activations constrained to+ 1 or-1.
\newblock \emph{arXiv preprint arXiv:1602.02830}, 2016.

\bibitem[Darabi et~al.(2018)Darabi, Belbahri, Courbariaux, and
  Nia]{darabi2018bnn+}
S.~Darabi, M.~Belbahri, M.~Courbariaux, and V.~P. Nia.
\newblock Bnn+: Improved binary network training.
\newblock \emph{arXiv preprint arXiv:1812.11800}, 2018.

\bibitem[Deng et~al.(2009)Deng, Dong, Socher, Li, Li, and
  Fei-Fei]{deng2009imagenet}
J.~Deng, W.~Dong, R.~Socher, L.-J. Li, K.~Li, and L.~Fei-Fei.
\newblock Imagenet: A large-scale hierarchical image database.
\newblock In \emph{2009 IEEE conference on computer vision and pattern
  recognition}, pages 248--255. Ieee, 2009.

\bibitem[Donoho(1995)]{donoho1995noising}
D.~L. Donoho.
\newblock De-noising by soft-thresholding.
\newblock \emph{IEEE transactions on information theory}, 41\penalty0
  (3):\penalty0 613--627, 1995.

\bibitem[Douglas and Rachford(1956)]{douglas1956numerical}
J.~Douglas and H.~H. Rachford.
\newblock On the numerical solution of heat conduction problems in two and
  three space variables.
\newblock \emph{Transactions of the American mathematical Society}, 82\penalty0
  (2):\penalty0 421--439, 1956.

\bibitem[Drusvyatskiy and Lewis(2018)]{drusvyatskiy2018error}
D.~Drusvyatskiy and A.~S. Lewis.
\newblock Error bounds, quadratic growth, and linear convergence of proximal
  methods.
\newblock \emph{Mathematics of Operations Research}, 43\penalty0 (3):\penalty0
  919--948, 2018.

\bibitem[Gabay and Mercier(1976)]{gabay1976dual}
D.~Gabay and B.~Mercier.
\newblock A dual algorithm for the solution of nonlinear variational problems
  via finite element approximation.
\newblock \emph{Computers \& mathematics with applications}, 2\penalty0
  (1):\penalty0 17--40, 1976.

\bibitem[Gao et~al.(2020)Gao, Goldfarb, and Curtis]{gao2020admm}
W.~Gao, D.~Goldfarb, and F.~E. Curtis.
\newblock Admm for multiaffine constrained optimization.
\newblock \emph{Optimization Methods and Software}, 35\penalty0 (2):\penalty0
  257--303, 2020.

\bibitem[Glowinski and Marroco(1975)]{glowinski1975approximation}
R.~Glowinski and A.~Marroco.
\newblock Sur l'approximation, par {\'e}l{\'e}ments finis d'ordre un, et la
  r{\'e}solution, par p{\'e}nalisation-dualit{\'e} d'une classe de
  probl{\`e}mes de dirichlet non lin{\'e}aires.
\newblock \emph{ESAIM: Mathematical Modelling and Numerical
  Analysis-Mod{\'e}lisation Math{\'e}matique et Analyse Num{\'e}rique},
  9\penalty0 (R2):\penalty0 41--76, 1975.

\bibitem[He et~al.(2016)He, Zhang, Ren, and Sun]{he2016deep}
K.~He, X.~Zhang, S.~Ren, and J.~Sun.
\newblock Deep residual learning for image recognition.
\newblock In \emph{Proceedings of the IEEE conference on computer vision and
  pattern recognition}, pages 770--778, 2016.

\bibitem[Hong(2017)]{hong2017distributed}
M.~Hong.
\newblock A distributed, asynchronous, and incremental algorithm for nonconvex
  optimization: an admm approach.
\newblock \emph{IEEE Transactions on Control of Network Systems}, 5\penalty0
  (3):\penalty0 935--945, 2017.

\bibitem[Hong et~al.(2016)Hong, Luo, and Razaviyayn]{hong2016convergence}
M.~Hong, Z.-Q. Luo, and M.~Razaviyayn.
\newblock Convergence analysis of alternating direction method of multipliers
  for a family of nonconvex problems.
\newblock \emph{SIAM Journal on Optimization}, 26\penalty0 (1):\penalty0
  337--364, 2016.

\bibitem[Hong et~al.(2018)Hong, Razaviyayn, and Lee]{hong2018gradient}
M.~Hong, M.~Razaviyayn, and J.~Lee.
\newblock Gradient primal-dual algorithm converges to second-order stationary
  solution for nonconvex distributed optimization over networks.
\newblock In \emph{International Conference on Machine Learning}, pages
  2009--2018, 2018.

\bibitem[Jin et~al.(2017)Jin, Ge, Netrapalli, Kakade, and
  Jordan]{jin2017escape}
C.~Jin, R.~Ge, P.~Netrapalli, S.~M. Kakade, and M.~I. Jordan.
\newblock How to escape saddle points efficiently.
\newblock In \emph{Proceedings of the 34th International Conference on Machine
  Learning-Volume 70}, pages 1724--1732. JMLR. org, 2017.

\bibitem[Kadkhodaie et~al.(2014)Kadkhodaie, Sanjabi, and
  Luo]{kadkhodaie2014linear}
M.~Kadkhodaie, M.~Sanjabi, and Z.-Q. Luo.
\newblock On the linear convergence of the approximate proximal splitting
  method for non-smooth convex optimization.
\newblock \emph{Journal of the Operations Research Society of China},
  2\penalty0 (2):\penalty0 123--141, 2014.

\bibitem[Krizhevsky et~al.(2009)Krizhevsky, Hinton,
  et~al.]{krizhevsky2009learning}
A.~Krizhevsky, G.~Hinton, et~al.
\newblock Learning multiple layers of features from tiny images.
\newblock 2009.

\bibitem[LeCun et~al.(1998)LeCun, Bottou, Bengio, and
  Haffner]{lecun1998gradient}
Y.~LeCun, L.~Bottou, Y.~Bengio, and P.~Haffner.
\newblock Gradient-based learning applied to document recognition.
\newblock \emph{Proceedings of the IEEE}, 86\penalty0 (11):\penalty0
  2278--2324, 1998.

\bibitem[Leng et~al.(2018)Leng, Dou, Li, Zhu, and Jin]{leng2018extremely}
C.~Leng, Z.~Dou, H.~Li, S.~Zhu, and R.~Jin.
\newblock Extremely low bit neural network: Squeeze the last bit out with admm.
\newblock In \emph{Thirty-Second AAAI Conference on Artificial Intelligence},
  2018.

\bibitem[Li and Pong(2015)]{li2015global}
G.~Li and T.~K. Pong.
\newblock Global convergence of splitting methods for nonconvex composite
  optimization.
\newblock \emph{SIAM Journal on Optimization}, 25\penalty0 (4):\penalty0
  2434--2460, 2015.

\bibitem[Li et~al.(2019)Li, Liu, Ma, Lin, Ye, Zhang, Lin, Xu, and
  Wang]{li2019admm}
H.~Li, N.~Liu, X.~Ma, S.~Lin, S.~Ye, T.~Zhang, X.~Lin, W.~Xu, and Y.~Wang.
\newblock Admm-based weight pruning for real-time deep learning acceleration on
  mobile devices.
\newblock In \emph{Proceedings of the 2019 on Great Lakes Symposium on VLSI},
  pages 501--506, 2019.

\bibitem[Li et~al.(2018)Li, Sahu, Zaheer, Sanjabi, Talwalkar, and
  Smith]{li2018federated}
T.~Li, A.~K. Sahu, M.~Zaheer, M.~Sanjabi, A.~Talwalkar, and V.~Smith.
\newblock Federated optimization in heterogeneous networks.
\newblock \emph{arXiv preprint arXiv:1812.06127}, 2018.

\bibitem[Lin et~al.(2019)Lin, Ma, Ye, Yuan, Ma, and Wang]{lin2019toward}
S.~Lin, X.~Ma, S.~Ye, G.~Yuan, K.~Ma, and Y.~Wang.
\newblock Toward extremely low bit and lossless accuracy in dnns with
  progressive admm.
\newblock \emph{arXiv preprint arXiv:1905.00789}, 2019.

\bibitem[Lin et~al.(2017)Lin, Zhao, and Pan]{lin2017towards}
X.~Lin, C.~Zhao, and W.~Pan.
\newblock Towards accurate binary convolutional neural network.
\newblock In \emph{Advances in Neural Information Processing Systems}, pages
  345--353, 2017.

\bibitem[Liu et~al.(2020)Liu, Ma, Xu, Wang, Tang, and Ye]{liu2020autocompress}
N.~Liu, X.~Ma, Z.~Xu, Y.~Wang, J.~Tang, and J.~Ye.
\newblock Autocompress: An automatic dnn structured pruning framework for
  ultra-high compression rates.
\newblock In \emph{AAAI}, pages 4876--4883, 2020.

\bibitem[Lu et~al.(2019{\natexlab{a}})Lu, Hong, and Wang]{lu2019pa}
S.~Lu, M.~Hong, and Z.~Wang.
\newblock Pa-gd: On the convergence of perturbed alternating gradient descent
  to second-order stationary points for structured nonconvex optimization.
\newblock In \emph{International Conference on Machine Learning}, pages
  4134--4143, 2019{\natexlab{a}}.

\bibitem[Lu et~al.(2019{\natexlab{b}})Lu, Razaviyayn, Yang, Huang, and
  Hong]{lu2019snap}
S.~Lu, M.~Razaviyayn, B.~Yang, K.~Huang, and M.~Hong.
\newblock Snap: Finding approximate second-order stationary solutions
  efficiently for non-convex linearly constrained problems.
\newblock \emph{arXiv preprint arXiv:1907.04450}, 2019{\natexlab{b}}.

\bibitem[Powell(1973)]{powell1973search}
M.~J. Powell.
\newblock On search directions for minimization algorithms.
\newblock \emph{Mathematical programming}, 4\penalty0 (1):\penalty0 193--201,
  1973.

\bibitem[Ramachandran et~al.(2017)Ramachandran, Zoph, and
  Le]{ramachandran2017searching}
P.~Ramachandran, B.~Zoph, and Q.~V. Le.
\newblock Searching for activation functions.
\newblock \emph{arXiv preprint arXiv:1710.05941}, 2017.

\bibitem[Rastegari et~al.(2016)Rastegari, Ordonez, Redmon, and
  Farhadi]{rastegari2016xnor}
M.~Rastegari, V.~Ordonez, J.~Redmon, and A.~Farhadi.
\newblock Xnor-net: Imagenet classification using binary convolutional neural
  networks.
\newblock In \emph{European conference on computer vision}, pages 525--542.
  Springer, 2016.

\bibitem[Razaviyayn et~al.(2013)Razaviyayn, Hong, and
  Luo]{razaviyayn2013unified}
M.~Razaviyayn, M.~Hong, and Z.-Q. Luo.
\newblock A unified convergence analysis of block successive minimization
  methods for nonsmooth optimization.
\newblock \emph{SIAM Journal on Optimization}, 23\penalty0 (2):\penalty0
  1126--1153, 2013.

\bibitem[Reddi et~al.(2016)Reddi, Kone{\v{c}}n{\`y}, Richt{\'a}rik,
  P{\'o}cz{\'o}s, and Smola]{reddi2016aide}
S.~J. Reddi, J.~Kone{\v{c}}n{\`y}, P.~Richt{\'a}rik, B.~P{\'o}cz{\'o}s, and
  A.~Smola.
\newblock Aide: Fast and communication efficient distributed optimization.
\newblock \emph{arXiv preprint arXiv:1608.06879}, 2016.

\bibitem[Ren et~al.(2019)Ren, Zhang, Ye, Li, Xu, Qian, Lin, and
  Wang]{ren2019admm}
A.~Ren, T.~Zhang, S.~Ye, J.~Li, W.~Xu, X.~Qian, X.~Lin, and Y.~Wang.
\newblock Admm-nn: An algorithm-hardware co-design framework of dnns using
  alternating direction methods of multipliers.
\newblock In \emph{Proceedings of the Twenty-Fourth International Conference on
  Architectural Support for Programming Languages and Operating Systems}, pages
  925--938, 2019.

\bibitem[Szegedy et~al.(2013)Szegedy, Zaremba, Sutskever, Bruna, Erhan,
  Goodfellow, and Fergus]{1312.6199}
C.~Szegedy, W.~Zaremba, I.~Sutskever, J.~Bruna, D.~Erhan, I.~Goodfellow, and
  R.~Fergus.
\newblock Intriguing properties of neural networks, 2013.

\bibitem[Takapoui et~al.(2020)Takapoui, Moehle, Boyd, and
  Bemporad]{takapoui2020simple}
R.~Takapoui, N.~Moehle, S.~Boyd, and A.~Bemporad.
\newblock A simple effective heuristic for embedded mixed-integer quadratic
  programming.
\newblock \emph{International Journal of Control}, 93\penalty0 (1):\penalty0
  2--12, 2020.

\bibitem[Tang et~al.(2017)Tang, Hua, and Wang]{tang2017train}
W.~Tang, G.~Hua, and L.~Wang.
\newblock How to train a compact binary neural network with high accuracy?
\newblock In \emph{Thirty-First AAAI conference on artificial intelligence},
  2017.

\bibitem[Wang and Banerjee(2014)]{NIPS2014_5612}
H.~Wang and A.~Banerjee.
\newblock Bregman alternating direction method of multipliers.
\newblock In Z.~Ghahramani, M.~Welling, C.~Cortes, N.~D. Lawrence, and K.~Q.
  Weinberger, editors, \emph{Advances in Neural Information Processing Systems
  27}, pages 2816--2824. Curran Associates, Inc., 2014.

\bibitem[Wang et~al.(2019{\natexlab{a}})Wang, Yu, Chen, and Zhao]{wang2019admm}
J.~Wang, F.~Yu, X.~Chen, and L.~Zhao.
\newblock Admm for efficient deep learning with global convergence.
\newblock In \emph{Proceedings of the 25th ACM SIGKDD International Conference
  on Knowledge Discovery \& Data Mining}, pages 111--119, 2019{\natexlab{a}}.

\bibitem[Wang et~al.(2019{\natexlab{b}})Wang, Yin, and Zeng]{wang2019global}
Y.~Wang, W.~Yin, and J.~Zeng.
\newblock Global convergence of admm in nonconvex nonsmooth optimization.
\newblock \emph{Journal of Scientific Computing}, 78\penalty0 (1):\penalty0
  29--63, 2019{\natexlab{b}}.

\bibitem[Xu et~al.(2018)Xu, Jin, and Yang]{xu2018first}
Y.~Xu, R.~Jin, and T.~Yang.
\newblock First-order stochastic algorithms for escaping from saddle points in
  almost linear time.
\newblock In \emph{Advances in Neural Information Processing Systems}, pages
  5530--5540, 2018.

\bibitem[Ye et~al.(2018)Ye, Zhang, Zhang, Li, Xu, Yang, Yu, Tang, Fardad, Liu,
  et~al.]{ye2018progressive}
S.~Ye, T.~Zhang, K.~Zhang, J.~Li, K.~Xu, Y.~Yang, F.~Yu, J.~Tang, M.~Fardad,
  S.~Liu, et~al.
\newblock Progressive weight pruning of deep neural networks using admm.
\newblock \emph{arXiv preprint arXiv:1810.07378}, 2018.

\bibitem[Ye et~al.(2019)Ye, Feng, Zhang, Ma, Lin, Li, Xu, Wen, Liu, Tang,
  et~al.]{ye2019progressive}
S.~Ye, X.~Feng, T.~Zhang, X.~Ma, S.~Lin, Z.~Li, K.~Xu, W.~Wen, S.~Liu, J.~Tang,
  et~al.
\newblock Progressive dnn compression: A key to achieve ultra-high weight
  pruning and quantization rates using admm.
\newblock \emph{arXiv preprint arXiv:1903.09769}, 2019.

\bibitem[Yin et~al.(2019)Yin, Lyu, Zhang, Osher, Qi, and
  Xin]{yin2019understanding}
P.~Yin, J.~Lyu, S.~Zhang, S.~Osher, Y.~Qi, and J.~Xin.
\newblock Understanding straight-through estimator in training activation
  quantized neural nets.
\newblock \emph{arXiv preprint arXiv:1903.05662}, 2019.

\bibitem[Yuan et~al.(2019{\natexlab{a}})Yuan, Ma, Ding, Lin, Zhang, Jalali,
  Zhao, Jiang, Soundarajan, and Wang]{yuan2019ultra}
G.~Yuan, X.~Ma, C.~Ding, S.~Lin, T.~Zhang, Z.~S. Jalali, Y.~Zhao, L.~Jiang,
  S.~Soundarajan, and Y.~Wang.
\newblock An ultra-efficient memristor-based dnn framework with structured
  weight pruning and quantization using admm.
\newblock In \emph{2019 IEEE/ACM International Symposium on Low Power
  Electronics and Design (ISLPED)}, pages 1--6. IEEE, 2019{\natexlab{a}}.

\bibitem[Yuan et~al.(2019{\natexlab{b}})Yuan, Chen, Hu, and Peng]{yuan2019tp}
Y.~Yuan, C.~Chen, X.~Hu, and S.~Peng.
\newblock Tp-admm: An efficient two-stage framework for training binary neural
  networks.
\newblock In \emph{International Conference on Neural Information Processing},
  pages 580--588. Springer, 2019{\natexlab{b}}.

\bibitem[Zhang and Luo(2018)]{zhang2018proximal}
J.~Zhang and Z.-Q. Luo.
\newblock A proximal alternating direction method of multiplier for linearly
  constrained nonconvex minimization.
\newblock \emph{arXiv preprint arXiv:1812.10229}, 2018.

\bibitem[Zhang et~al.(2018)Zhang, Ye, Zhang, Tang, Wen, Fardad, and
  Wang]{zhang2018systematic}
T.~Zhang, S.~Ye, K.~Zhang, J.~Tang, W.~Wen, M.~Fardad, and Y.~Wang.
\newblock A systematic dnn weight pruning framework using alternating direction
  method of multipliers.
\newblock In \emph{Proceedings of the European Conference on Computer Vision
  (ECCV)}, pages 184--199, 2018.

\bibitem[Zhang et~al.(2020)Zhang, Dall'Anese, and Hong]{zhang2020online}
Y.~Zhang, E.~Dall'Anese, and M.~Hong.
\newblock Online proximal-admm for time-varying constrained convex
  optimization.
\newblock \emph{arXiv preprint arXiv:2005.03267}, 2020.

\bibitem[Zhou et~al.(2016)Zhou, Wu, Ni, Zhou, Wen, and Zou]{zhou2016dorefa}
S.~Zhou, Y.~Wu, Z.~Ni, X.~Zhou, H.~Wen, and Y.~Zou.
\newblock Dorefa-net: Training low bitwidth convolutional neural networks with
  low bitwidth gradients.
\newblock \emph{arXiv preprint arXiv:1606.06160}, 2016.

\end{thebibliography}

\clearpage

 \onecolumn

\hsize\textwidth
  \linewidth\hsize \toptitlebar {
  \begin{center}
  \Large\bfseries Supplementary Materials:\\
  Alternating Direction Method of Multipliers for Quantization
\end{center}}
 \bottomtitlebar 



\setcounter{section}{0}
\renewcommand{\thesection}{\Alph{section}}
\section{On the update rules of \admmq}\label{Apx:ExactAdmmDetails}
Consider the following optimization problem mentioned in section~\ref{subsec: admmq_form}:
\[
\min_{x} \quad f(x) + \mathcal{I}_\cA (y) \quad \quad  \st \quad x = y.
\]
Following the steps of regular ADMM in section~\ref{subsec:ADMMReview}, we have:
\[
\mathcal{L}(x,y,\lambda) \triangleq f(x) + \mathcal{I}_\cA(y) + \langle \lambda , x - y\rangle  + \frac{\rho}{2} \|x - y\|_2^2,
\]
In regular ADMM, the order of updating variables does not matter for convergence. When we extend its use to quantization, we update $y$, $x$ and $\lambda$ in sequence at each iteration, which is convenient for analyzing its convergence.
\begin{align}
    & \textrm{Primal Update:} \quad &&y^{r+1}  = \arg\min_{y} \mathcal{L}(x^r,y,\lambda^r), \quad\quad 
    x^{r+1}  = \arg\min_{x} \mathcal{L}(y,x^{r+1},\lambda^r)\nonumber\\
    & \textrm{Dual Update:}\quad &&\lambda^{r+1}  = \lambda^r + \rho ( x^{r+1} - y^{r+1}).  \nonumber
\end{align}
The update rule of $x$ and $\lambda$ is clear. We only derive the update rule of $y$ here:
\begin{equation}
    \begin{aligned}
    y^{r+1}&=\arg\min_{y} \mathcal{L}(x^r,y,\lambda^r)\\
    &=\arg\min_{y}f(x^r)+\mathcal{I}_\cA(y)+\langle \lambda^r , x^r - y\rangle  + \frac{\rho}{2} \|x^r - y\|_2^2\\
    &=\arg\min_{y}\mathcal{I}_\cA(y)+\langle \lambda^r , x^r - y\rangle  + \frac{\rho}{2} \|x^r - y\|_2^2\\
    &=\arg\min_{y}\mathcal{I}_\cA(y)+\langle \lambda^r , x^r - y\rangle  + \frac{\rho}{2} \|x^r - y\|_2^2\\
    &=\arg\min_{y}\mathcal{I}_\cA(y)+\|y-x^r-\rho^{-1}\lambda^r\|^2_2\\
    &=\cP_{\cA}(x^r+\rho^{-1}\lambda^r)
    \end{aligned}
\end{equation}

\section{Proofs in Section~\ref{sec:ADMMQ}}
\label{sec:ProofsofADMMQ}
\begin{lemma} \label{lemma:lambda_r}
For any $r\geq 1$ we have~$\lr = -\nabla_x f(x^r)$.
\end{lemma}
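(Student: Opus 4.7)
The plan is to read off the identity directly from the first-order optimality condition of the $x$-subproblem and then substitute into the dual update. This is short enough that the main ``obstacle'' is really just making sure the indexing lines up between the primal update that produces $x^{r+1}$ and the dual update that produces $\lambda^{r+1}$.

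First I would write out the augmented Lagrangian explicitly, $\mathcal{L}(x,y,\lambda) = f(x) + \mathcal{I}_\mathcal{A}(y) + \langle \lambda, x-y\rangle + \tfrac{\rho}{2}\|x-y\|_2^2$, and note that for fixed $y^{r+1}$ and $\lambda^r$ the map $x \mapsto \mathcal{L}(x, y^{r+1}, \lambda^r)$ is continuously differentiable (the indicator $\mathcal{I}_\mathcal{A}(y^{r+1})$ is a constant in $x$). Because step~\ref{xupdate} of Algorithm~\ref{alg: ADMM exact} requires $x^{r+1}$ to be an unconstrained minimizer, the first-order optimality condition gives
\begin{equation*}
\nabla_x \mathcal{L}(x^{r+1}, y^{r+1}, \lambda^r) = \nabla f(x^{r+1}) + \lambda^r + \rho\,(x^{r+1} - y^{r+1}) = 0.
\end{equation*}

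Next, I would recognize the last two terms as precisely the right-hand side of the dual update in Algorithm~\ref{alg: ADMM exact}, namely $\lambda^{r+1} = \lambda^r + \rho(x^{r+1} - y^{r+1})$. Substituting this into the displayed optimality condition yields
\begin{equation*}
\nabla f(x^{r+1}) + \lambda^{r+1} = 0,
\end{equation*}
i.e., $\lambda^{r+1} = -\nabla f(x^{r+1})$. Re-indexing $r+1 \mapsto r$ (which is valid for every $r \geq 1$ since the update has been executed at least once) gives the stated conclusion $\lambda^r = -\nabla_x f(x^r)$ for all $r \geq 1$. The only subtle point is the lower bound $r \geq 1$: the identity need not hold at $r=0$ because $\lambda^0$ is an arbitrary initialization, whereas for $r \geq 1$ the dual has been tied to the gradient through at least one primal--dual sweep.
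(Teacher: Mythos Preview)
Your proposal is correct and follows essentially the same approach as the paper: use the first-order optimality condition of the $x$-subproblem, identify $\lambda^r + \rho(x^{r+1}-y^{r+1})$ as $\lambda^{r+1}$ via the dual update, and conclude. The paper's proof is a one-line version of exactly this argument.
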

\begin{proof}
based on the algorithm updates and the optimality condition for $\xrn$ we can easily verify that:
\begin{equation}
\nonumber
    \nabla_{x} f(x^{r+1})+\underbrace{\lambda^r+\rho(x^{r+1}-y^{r+1})}_{\lambda^{r+1}} = 0.
\end{equation}
\end{proof}

\noindent\textbf{Lemma~\ref{lemma: lowerbound}.}~
If $\rho\geq L_f$, we have
$ 
\mathcal{L}(\xr,\yr,\lr)\geq f(\yr) \geq  f_{\min},  \quad \forall r\geq 1.
$
\begin{proof}
Note that based on Lemma~\ref{lemma:lambda_r}, we have
\begin{align}
    \nonumber
    \LA(\xr,\yr,\lr) &= f(\xr) + \langle\nabla f(\xr), \yr-\xr \rangle + \frac{\rho}{2}\|\xr-\yr\|^2\\
    &\geq f(\yr) \geq f_{\min}
\end{align}
where the last two inequalities are due to Assumptions \ref{assumption: lipschitz} and \ref{assumption: lowerbdd}, respectively.
\end{proof}

\noindent\textbf{Lemma~\ref{lemma: decrease}.}
Define $\sigma(\rho) \triangleq \rho-\mu$.  We have 
\begin{equation}
    \label{eq: diff}
    \mathcal{L}(x^{r+1},y^{r+1},\lambda^{r+1})-\mathcal{L}(x^r,y^r,\lambda^r)\leq \left(\rho^{-1}{L_f}^2-\frac{\sigma(\rho)}{2}\right)\left\|x^{r+1}-x^r\right\|^2.
\end{equation}

\begin{proof}
Let us re-write \eqref{eq: diff} as
\begin{equation}
\nonumber
\begin{split}
\mathcal{L}(x^{r+1},y^{r+1},\lambda^{r+1})-\mathcal{L}(x^r,y^r,\lambda^r)=\underbrace{\mathcal{L}(x^{r+1},y^{r+1},\lambda^{r+1})-\mathcal{L}(x^{r+1},y^{r+1},\lambda^r)}_{(A)}\\+ \underbrace{\mathcal{L}(x^{r+1},y^{r+1},\lambda^r)-\mathcal{L}(x^r,y^r,\lambda^r)}_{(B)}.
\end{split}
\end{equation}
\\
We want to show that $(A)+(B)\leq0$. First of all note that

\begin{equation}
\nonumber
    (A)=\langle\lambda^{r+1},x^{r+1}-y^{r+1}\rangle-\langle\lambda^r,x^{r+1}-y^{r+1}\rangle=\rho^{-1}\left\|\lambda^{r+1}-\lambda^{r}\right\|^2.
\end{equation}
\\
\noindent By the optimality condition of $x^{r+1}$, we have:
\begin{equation}
\nonumber
    \nabla_{x} f(x^{r+1})+\underbrace{\lambda^r+\rho(x^{r+1}-y^{r+1})}_{\lambda^{r+1}} = 0,
\end{equation}
showing that $\nabla_{x} f(x^{r+1})=-\lambda^{r+1}$, or $\nabla_{x} f(x^{r})=-\lambda^{r}$.

Furthermore, by the lipschitz assumption of $f(\cdot)$, we have $\left\|\nabla_{x}f(x^{r+1})-\nabla_{x}f(x^r)\right\|^2\leq {L_f}^2\left\|x^{r+1}-x^r\right\|^2$, showing that $$\left\|\lambda^{r+1}-\lambda^{r} \right\|^2\leq {L_f}^2\left\|x^{r+1}-x^r\right\|^2.$$\\
Therefore,
\[
(A)\leq\rho^{-1}{L_f}^2\left\|x^{r+1}-x^r\right\|^2.
\]

On the other hand:
\begin{equation}
\nonumber
    \begin{aligned}
        (B)&=\mathcal{L}(x^{r+1},y^{r+1},\lambda^r)-\mathcal{L}(x^r,y^r,\lambda^r)\\
        &=\mathcal{L}(x^{r+1},y^{r+1},\lambda^r)-\mathcal{L}(x^r,y^{r+1},\lambda^r)+\underbrace{\mathcal{L}(x^r,y^{r+1},\lambda^r)-\mathcal{L}(x^r,y^r,\lambda^r)}_{\leq0}\\
        &\leq \mathcal{L}(x^{r+1},y^{r+1},\lambda^r)-\mathcal{L}(x^r,y^{r+1},\lambda^r)\\
        &\leq -\frac{\sigma(\rho)}{2}\left\|x^{r+1}-x^r\right\|^2,
    \end{aligned}
\end{equation}
where $\sigma(\rho)$ is the strong convex modulus of $\mathcal{L}(\cdot, \yrn, \lr)$ (note that $\sigma(\rho) = \rho-\mu$). 
\end{proof}

{
{\color{black}
\begin{example}
\label{example:nonexist}
Consider the optimization problem $\min_{x\in \mathbb{Z}} \frac{1}{2}(x^2-x)$. It is easy to verify that a $\rho$-stationary point does not exist for ${\rho=\frac{1}{2} < L_f =1}$.
\end{example}
}
\color{black}
\noindent\textbf{Lemma~\ref{lemma: optimality}.} Assume $x^\star$ is the global optimal solution to problem~(1), then $x^\star$ is $\rho$-stationary for any $\rho\geq L_f$.
\begin{proof}
Assume the contrary that $x^\star$ is not $\rho$-stationary. By Definition~\ref{def: stationarity}, $x^\star\notin \arg\min_{a\in\mathcal{A}} \frac{\rho}{2}\|a-x^\star+\rho^{-1}\nabla f(x^\star)\|^2$. Expanding the objective and adding the constant term $f(x^\star)-\frac{1}{2\rho}\|\nabla f(x^\star)\|^2$ implies that
\begin{equation}
\nonumber
    x^\star\notin \arg\min_{a\in\mathcal{A}}~\left(\widehat{f}(a; x^\star):= f(x^\star)+\langle \nabla f(x^\star),\;a-x^\star\rangle + \frac{\rho}{2}\|a-x^\star\|^2\right).
\end{equation}
Since $\rho\geq L_f$, we have $f(a)\leq \widehat{f}(a; x^\star)$, $\forall~a$, according to the descent lemma. Moreover, there exists ${a^\star \in \arg\min_{a\in\mathcal{A}}\widehat{f}(a;x^\star)}$ by the compactness of $\mathcal{A}$. Thus, $f(a^\star)\leq\widehat{f}(a^\star;x^\star)<\widehat{f}(x^\star;x^\star)=f(x^\star)$, which contradicts the optimality of $x^\star$. 
\end{proof}
}
\noindent\textbf{Theorem~\ref{Thm:ADMMQConvergence}.}
Assume $(\xb, \yb, \lb)$ is a limit point of the \admmq~algorithm. Then $\xb$ is a $\rho$--stationary point of the optimization problem~\eqref{eq:original}.
\begin{proof}
Consider a sub-sequence $(\xrt,\yrt,\lrt)$, for $t=0,\cdots$ which converges to $(\xb, \yb, \lb)$. First of all due to decrease lemma \ref{lemma: decrease} and lower boundedness of augmented Lagrangian, Lemma \ref{lemma: lowerbound}, we know that ${\lim_{t\rightarrow \infty}\|x^{r_t+1}-\xrt\|=0}$. Thus, 
\begin{equation}
    \lim_{t\rightarrow \infty} x^{r_t+1} = \xb
\end{equation}
Now based on Lemma~\eqref{lemma:lambda_r}, we also know that
\begin{align}
    \lb = \lim_{t\rightarrow \infty} \lrt = \lim_{t\rightarrow \infty} \nabla f(\xrt) = \nabla f(\xb)\\
    \lim_{t\rightarrow \infty} \lambda^{r_t+1} =  \lim_{t\rightarrow \infty} \nabla f(x^{r_t+1}) = \nabla f(\xb) 
\end{align}
Thus, $\lim_{t\rightarrow \infty} \lambda^{r_t+1} = \lb$. 

Also, as $\mathcal{A}$ is finite, there exists a large enough $T$, such that $\yrt = \bar{y}$ for $t\geq T$. Again due to the fact that $\mathcal{A}$ is finite, we can re-fine the sub-sequence such that $y^{r_t+1} = \hat{y}$. Thus, without loss of generality assume that these two conditions hold, i.e. $\yrt = \bar{y}$ and $y^{r_t+1} = \hat{y}$ for all $t$ for an appropriately refined sub-sequence. This means that
\begin{equation}
\label{eq: y_hat1}
    \hat{y} \in \arg\min_{a}\|a-(\xrt+\rho^{-1}\lrt)\|
\end{equation}
Moreover, $\lambda^{r_t+1} = \lrt + \rho(\hat{y}-\xrt)$. Taking the $\lim_{t\rightarrow\infty}$ from both sides, we get
\begin{equation}
    \hat{y} = \xb.
\end{equation}
Combining the above with \eqref{eq: y_hat1} we can easily see that
\begin{equation}
    \|\xb-(\xrt+\rho^{-1}\lrt)\| \leq \|a_i-(\xrt+\rho^{-1}\lrt)\|,~i=0,\cdots,N
\end{equation}
Taking the limits $\lim_{t\rightarrow\infty}$ from both hand sides of the inequality for all the points $a_i$ we have
\begin{equation}
    \|\xb-(\xb+\rho^{-1}\lb)\| \leq \|a_i-(\xb+\rho^{-1}\lb)\|,~i=0,\cdots,N.
\end{equation}
Thus, 
\begin{equation}
    \xb\in \arg\min_{a\in\mathcal{A}}\|a-(\xb-\rho^{-1}\nabla f(\xb))\|,
\end{equation}
where we used the fact that $\lb = -\nabla f(\xb)$.
\end{proof}

\section{Convergence Analysis for \iqadmm}
\label{app: proof_inexact}
In order to prove the main convergence results, we need a few definitions and helper lemmas. Throughout this section we re-state all the theoretical results and prove them in the order we need them. For a reference of the steps in the algorithm see Algorithm~\ref{alg: inexact_admm}.

First, let us define:
\begin{align}
    e^r = \nabla_x\mathcal{L}(x^r,y^{r},\lambda^{r-1}) = \nabla f(x^r) + \lambda^{r-1} + \rho(x^r-y^{r}) = \nabla f(x^r) + \lambda^{r}
\end{align}


\begin{lemma} 
\label{lemma: strong-convexity}
Due to $\sigma(\rho)$-strong convexity and $(L_f+\rho)$-smoothness of $\mathcal{L}(\cdot, y^{r}, \lambda^{r-1})$, we know that
\begin{align}
    \sigma(\rho) \|x^r-x_\star^r\|\leq \|e^r\| \leq (\rho + L_f)\|x^r-x_\star^r\|
\end{align}
Moreover, due to strong convexity we also know that:
\begin{align}
    \langle e^r, x^r-x_\star^r\rangle \geq \sigma(\rho) \|x^r-x_\star^r\|^2
\end{align}
\end{lemma}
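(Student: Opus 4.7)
The plan is to view this as a direct consequence of standard convex-analytic facts, once the strong-convexity and smoothness moduli of $\mathcal{L}(\cdot, y^r, \lambda^{r-1})$ are pinned down. First I would observe that, as a function of $x$ alone,
\[
\mathcal{L}(x, y^r, \lambda^{r-1}) = f(x) + \langle \lambda^{r-1}, x - y^r\rangle + \tfrac{\rho}{2}\|x - y^r\|^2,
\]
since the $\mathcal{I}_\mathcal{A}(y^r)$ term is constant in $x$. Under Assumption~\ref{assumption: hessianbddbelow}, $f(x) + \tfrac{\mu}{2}\|x\|^2$ is convex, and the quadratic penalty contributes $\rho$ to the curvature, so $\mathcal{L}(\cdot, y^r, \lambda^{r-1})$ is $(\rho - \mu) = \sigma(\rho)$-strongly convex. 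Likewise, $L_f$-Lipschitz gradient of $f$ (Assumption~\ref{assumption: lipschitz}) combined with the $\rho$-smooth quadratic yields $(L_f + \rho)$-smoothness of $\mathcal{L}(\cdot, y^r, \lambda^{r-1})$.

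Next, I would use the optimality condition for the exact minimizer $x_\star^r = \arg\min_x \mathcal{L}(x, y^r, \lambda^{r-1})$, which gives $\nabla_x \mathcal{L}(x_\star^r, y^r, \lambda^{r-1}) = 0$. Therefore, by the definition of $e^r$,
\[
e^r = \nabla_x \mathcal{L}(x^r, y^r, \lambda^{r-1}) - \nabla_x \mathcal{L}(x_\star^r, y^r, \lambda^{r-1}).
\]
The upper bound $\|e^r\| \leq (L_f + \rho)\|x^r - x_\star^r\|$ is then immediate from the $(L_f+\rho)$-smoothness (i.e., Lipschitz continuity of $\nabla_x \mathcal{L}(\cdot, y^r, \lambda^{r-1})$).

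For the second display, I would apply the standard strong-convexity inequality $\langle \nabla F(u) - \nabla F(v), u - v\rangle \geq \sigma \|u - v\|^2$ to $F = \mathcal{L}(\cdot, y^r, \lambda^{r-1})$ with $u = x^r$ and $v = x_\star^r$; because $\nabla F(x_\star^r) = 0$, the left-hand side is exactly $\langle e^r, x^r - x_\star^r\rangle$. This gives $\langle e^r, x^r - x_\star^r\rangle \geq \sigma(\rho)\|x^r - x_\star^r\|^2$. The lower bound $\|e^r\| \geq \sigma(\rho)\|x^r - x_\star^r\|$ then follows by Cauchy--Schwarz applied to the same inner product.

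There is no real obstacle here: the lemma is essentially a restatement of the two standard first-order consequences (co-coercivity/Lipschitzness of the gradient and strong monotonicity of the gradient) for a function whose strong convexity/smoothness moduli are already identified. The only mild care required is to note that $\sigma(\rho) > 0$ is implicit in the regime $\rho > \mu$ assumed throughout (otherwise the strong-convexity inequality is trivial but uninformative), and to keep the bookkeeping straight between $e^r$ (gradient at $x^r$) and the vanishing gradient at $x_\star^r$.
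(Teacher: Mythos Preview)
Your proposal is correct and is exactly the standard argument. The paper does not actually supply a proof of this lemma at all; it simply states the two displayed inequalities as direct consequences of the $\sigma(\rho)$-strong convexity and $(L_f+\rho)$-smoothness of $\mathcal{L}(\cdot, y^r, \lambda^{r-1})$, so your write-up is precisely the routine verification the authors left implicit.
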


\begin{lemma} 
\label{lemma: lower_bound_inexact}
If $\rho\geq L_f$ and we also assume that the iterates $x^r$ stay bounded. Then there exists a non-negative number $\bar{D}$ s.t. $\|x^r-y^r\|\leq \bar{D}$. With this definition, 
\begin{align}
    \mathcal{L}(x^r, y^r, \lambda^r) \geq f_{\min} - {\gamma}(\rho+L_f)\bar{D}^2
\end{align}
\end{lemma}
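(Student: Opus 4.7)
\medskip

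\noindent\textbf{Proof plan for Lemma~\ref{lemma: lower_bound_inexact}.} The boundedness of $\|x^r-y^r\|$ is the easy half: the hypothesis gives boundedness of $\{x^r\}$, and since $y^r \in \mathcal{A}$ with $\mathcal{A}$ finite (per problem~\eqref{eq:original}), the set $\{y^r\}$ is automatically bounded; hence $\bar{D}:=\sup_r\|x^r-y^r\|<\infty$ is well defined. The real work is the lower bound on $\mathcal{L}$, and my plan is to mimic the proof of Lemma~\ref{lemma: lowerbound}, but carefully tracking the residual term arising from the inexact $x$-update.

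The first step is to use the dual update $\lambda^r = \lambda^{r-1}+\rho(x^r-y^r)$ to rewrite the definition of the inexactness residual as $e^r = \nabla f(x^r)+\lambda^r$, i.e.\ $\lambda^r = e^r - \nabla f(x^r)$. Plugging this into $\mathcal{L}(x^r,y^r,\lambda^r)$ and using $\mathcal{I}_{\mathcal{A}}(y^r)=0$ (since $y^r$ is a projection onto $\mathcal{A}$) yields
\[
\mathcal{L}(x^r,y^r,\lambda^r) = f(x^r) + \langle \nabla f(x^r),\, y^r-x^r\rangle + \frac{\rho}{2}\|x^r-y^r\|^2 + \langle e^r,\, x^r-y^r\rangle.
\]
I would then apply the $L_f$-smoothness descent inequality in the form $f(x^r) + \langle \nabla f(x^r), y^r-x^r\rangle \geq f(y^r) - \tfrac{L_f}{2}\|x^r-y^r\|^2$ and invoke the assumption $\rho \geq L_f$ to absorb the $L_f/2$ term into the $\rho/2$ term, giving
\[
\mathcal{L}(x^r,y^r,\lambda^r) \;\geq\; f(y^r) + \langle e^r,\, x^r-y^r\rangle.
\]

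The last step is to control the residual inner product. By Cauchy--Schwarz, $\langle e^r, x^r-y^r\rangle \geq -\|e^r\|\,\|x^r-y^r\|$. The right-hand inequality of Lemma~\ref{lemma: strong-convexity} gives $\|e^r\| \leq (\rho+L_f)\|x^r-x^r_\star\|$, and then the inexactness criterion~\eqref{eq:InexactUpdate} specialized to the $\|x^{r}-y^{r}\|$ branch of the minimum supplies $\|x^r-x^r_\star\| \leq \gamma\|x^r-y^r\|$. Chaining these and using $\|x^r-y^r\|\leq \bar{D}$ yields $\langle e^r, x^r-y^r\rangle \geq -\gamma(\rho+L_f)\bar{D}^2$. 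Combined with $f(y^r)\geq f_{\min}$ from Assumption~\ref{assumption: lowerbdd}, this delivers the claimed bound.

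The only mild obstacle I anticipate is the bookkeeping around the index shift between $\lambda^{r-1}$ in the definition of $e^r$ and the $\lambda^r$ that appears in $\mathcal{L}(x^r,y^r,\lambda^r)$; making sure the dual update is applied at the right iterate is what lets the $\nabla f$ and $L_f/2$ cross-terms cancel neatly, leaving a single clean residual $\langle e^r, x^r-y^r\rangle$ to bound. Everything else is a direct application of already-stated tools (smoothness descent, Cauchy--Schwarz, Lemma~\ref{lemma: strong-convexity}, and the inexact-update assumption).
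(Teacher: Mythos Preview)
Your proposal is correct and follows essentially the same route as the paper: substitute $\lambda^r = e^r - \nabla f(x^r)$ into $\mathcal{L}$, apply the descent lemma with $\rho\geq L_f$ to get $f(y^r)$, and bound the residual $\langle e^r, x^r-y^r\rangle$ via Cauchy--Schwarz, Lemma~\ref{lemma: strong-convexity}, and the inexactness criterion~\eqref{eq:InexactUpdate}. The only addition you make is the explicit justification that $\bar{D}$ exists (via boundedness of $\{x^r\}$ and finiteness of $\mathcal{A}$), which the paper leaves implicit in the lemma statement.
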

\begin{proof}
    Note that
    \begin{align}
      \mathcal{L}(x^r, y^r, \lambda^r) &=
      f(x^r) + \langle \lambda^r, x^r-y^r\rangle + \frac{\rho}{2}\|x^r-y^r\|^2\\
      & = \underbrace{f(x^r) + \langle\nabla f(x^r), y^r-x^r \rangle + \frac{\rho}{2}\|x^r-y^r\|^2}_{\geq f(y^r)}+ \langle e^r, x^r-y^r\rangle\\
      &\geq f(y^r) - \|e^r\|\|x^r-y^r\|\\
      &\geq f_{\min} - {\gamma} (\rho+L_f)\bar{D}^2
    \end{align}
    where the last inequality is due to the assumptions and Lemma \ref{lemma: strong-convexity}.
\end{proof}
Now let us prove sufficient decrease on $\mathcal{L}$ in each iteration.
\begin{lemma}
\label{lemma: decrease_inexact}
Let the assumptions of Lemma \ref{lemma: lower_bound_inexact} be true. Also, define 
\begin{equation}
    \alpha = \bigg(\frac{2L_f^2}{\rho} + \frac{4(\rho+L_f)^2{\gamma}^2}{\rho}+ \frac{{\gamma}^2(\rho+L_f)}{2}-\frac{(1-{\gamma})^2\sigma(\rho)}{2}\bigg)
\end{equation}
and $\beta = \frac{4(\rho+L_f)^2{\gamma}^2}{\rho}$. Note that $\sigma(\rho) = \rho-\mu\geq 0$. Furthermore, assume that the parameters $\rho$ and ${\gamma}$ are chosen such that $\alpha+\beta<0$. Then, have
\begin{equation}
    \lim_{r\rightarrow\infty}\|x^{r+1}-x^r\| = 0.
\end{equation}

\end{lemma}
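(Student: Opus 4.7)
The plan is to establish a ``sufficient decrease'' inequality of the form
\begin{equation*}
\mathcal{L}(x^{r+1},y^{r+1},\lambda^{r+1}) - \mathcal{L}(x^r,y^r,\lambda^r) \;\leq\; \alpha\,\|x^{r+1}-x^r\|^2 + \beta\,\|x^r-x^{r-1}\|^2 \qquad (r\geq 1),
\end{equation*}
and then telescope-sum it against the uniform lower bound on $\mathcal{L}$ from Lemma~\ref{lemma: lower_bound_inexact} to conclude $\sum_{r}\|x^{r+1}-x^r\|^2<\infty$, whence $\|x^{r+1}-x^r\|\to 0$.

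To produce the one-step inequality, I would mirror the three-term split used in Lemma~\ref{lemma: decrease}, writing the difference as $(A)+(B)+(C)$ with $(A)=\mathcal{L}(x^{r+1},y^{r+1},\lambda^{r+1})-\mathcal{L}(x^{r+1},y^{r+1},\lambda^r)$, $(B)=\mathcal{L}(x^{r+1},y^{r+1},\lambda^r)-\mathcal{L}(x^r,y^{r+1},\lambda^r)$, and $(C)=\mathcal{L}(x^r,y^{r+1},\lambda^r)-\mathcal{L}(x^r,y^r,\lambda^r)$. As in the exact case, $(C)\leq 0$ by optimality of $y^{r+1}=\mathcal{P}_\mathcal{A}(x^r+\rho^{-1}\lambda^r)$, and $(A)=\rho^{-1}\|\lambda^{r+1}-\lambda^r\|^2$ by the $\lambda$-update. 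What is new in the inexact setting is that $\nabla f(x^r)+\lambda^r = e^r$ rather than $0$; combined with Lemma~\ref{lemma: strong-convexity} and the inexactness rule~\eqref{eq:InexactUpdate} applied to the $\|x^{r}-x^{r-1}\|$ side of the min, this yields $\|e^r\|\leq(\rho+L_f)\gamma\,\|x^r-x^{r-1}\|$. Writing $\lambda^{r+1}-\lambda^r=(e^{r+1}-e^r)-(\nabla f(x^{r+1})-\nabla f(x^r))$ and applying $L_f$-smoothness together with $\|a+b+c\|^2\leq 2\|a\|^2+4\|b\|^2+4\|c\|^2$ bounds $(A)$ by $\big(\tfrac{2L_f^2}{\rho}+\tfrac{4(\rho+L_f)^2\gamma^2}{\rho}\big)\|x^{r+1}-x^r\|^2+\tfrac{4(\rho+L_f)^2\gamma^2}{\rho}\|x^r-x^{r-1}\|^2$, contributing the $\beta$ term and part of $\alpha$.

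For $(B)$, since $\mathcal{L}(\cdot,y^{r+1},\lambda^r)$ is $\sigma(\rho)$-strongly convex and $(\rho+L_f)$-smooth around its minimizer $x^{r+1}_\star$, I would use
\begin{equation*}
(B)\;\leq\;\tfrac{\rho+L_f}{2}\|x^{r+1}-x^{r+1}_\star\|^2 - \tfrac{\sigma(\rho)}{2}\|x^r-x^{r+1}_\star\|^2.
\end{equation*}
The inexactness bound $\|x^{r+1}-x^{r+1}_\star\|\leq \gamma\|x^{r+1}-x^r\|$ handles the first term, while the reverse triangle inequality gives $\|x^r-x^{r+1}_\star\|\geq(1-\gamma)\|x^{r+1}-x^r\|$ for the second; together they contribute the remaining $\tfrac{(\rho+L_f)\gamma^2-(1-\gamma)^2\sigma(\rho)}{2}$ to $\alpha$. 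Adding $(A)$, $(B)$, $(C)$ delivers the target inequality with exactly the $\alpha,\beta$ defined in the lemma statement.

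To close, I would sum the inequality from $r=1$ to $R$. The telescoping on the left yields $\mathcal{L}(x^{R+1},y^{R+1},\lambda^{R+1})-\mathcal{L}(x^1,y^1,\lambda^1)$; on the right, re-indexing $\sum_{r=1}^{R}\|x^r-x^{r-1}\|^2\leq \|x^1-x^0\|^2+\sum_{r=1}^{R}\|x^{r+1}-x^r\|^2$ combines the two sums into $(\alpha+\beta)\sum_{r=1}^{R}\|x^{r+1}-x^r\|^2$ plus a fixed constant. Using Lemma~\ref{lemma: lower_bound_inexact} to bound the left-hand side from below by $f_{\min}-\gamma(\rho+L_f)\bar D^{2}-\mathcal{L}(x^1,y^1,\lambda^1)$ (a uniform constant), and noting $-(\alpha+\beta)>0$ by hypothesis, yields $\sum_{r=1}^{\infty}\|x^{r+1}-x^r\|^2<\infty$, which forces $\|x^{r+1}-x^r\|\to 0$.

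The main obstacle is term $(A)$: in the exact ADMM proof, $\lambda^{r+1}-\lambda^r$ is directly controlled by $L_f\|x^{r+1}-x^r\|$ via the exact first-order condition, but inexactness introduces the additional errors $e^{r+1}$ and $e^r$ that must be bounded by consecutive primal differences. Bounding $\|e^r\|$ in terms of $\|x^r-x^{r-1}\|$ is what forces the two-term recursion involving both $\|x^{r+1}-x^r\|^2$ and $\|x^r-x^{r-1}\|^2$, and requires the slightly awkward re-indexing step when telescoping; careful tracking of the constants is what gives rise to the somewhat intricate condition $\alpha+\beta<0$ on $\rho$ and $\gamma$.
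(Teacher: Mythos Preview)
Your proposal is correct and follows essentially the same route as the paper: the same $(A)+(B)+(C)$ decomposition, the same use of $e^r=\nabla f(x^r)+\lambda^r$ with Lemma~\ref{lemma: strong-convexity} and the inexactness rule~\eqref{eq:InexactUpdate} to bound $(A)$, the same smoothness/strong-convexity two-term bound for $(B)$ around $x^{r+1}_\star$, and the same telescoping against Lemma~\ref{lemma: lower_bound_inexact}. The only cosmetic differences are that the paper applies $\|a+b\|^2\le 2\|a\|^2+2\|b\|^2$ twice rather than your single three-term inequality, and it telescopes from $r=0$ (absorbing the boundary term via $\beta\ge 0$) instead of your re-indexing from $r=1$; neither changes the argument or the constants.
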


\begin{proof}
Let us re-write \eqref{eq: diff} as
\begin{equation}
\nonumber
\begin{split}
\mathcal{L}(x^{r+1},y^{r+1},\lambda^{r+1})-\mathcal{L}(x^r,y^r,\lambda^r)=\underbrace{\mathcal{L}(x^{r+1},y^{r+1},\lambda^{r+1})-\mathcal{L}(x^{r+1},y^{r+1},\lambda^r)}_{(A)}\\+ \underbrace{\mathcal{L}(x^{r+1},y^{r+1},\lambda^r)-\mathcal{L}(x^r,y^r,\lambda^r)}_{(B)}.
\end{split}
\end{equation}
\\
We want to show that $(A)+(B)\leq0$.

\begin{equation}
\nonumber
    (A)=\langle\lambda^{r+1},x^{r+1}-y^{r+1}\rangle-\langle\lambda^r,x^{r+1}-y^{r+1}\rangle=\rho^{-1}\left\|\lambda^{r+1}-\lambda^{r}\right\|^2.
\end{equation}
Using our definitions, we have
\begin{align}
    (A) &= \rho^{-1}\|\lambda^{r+1}-\lambda^r\|^2\\
    & = \rho^{-1} \|\nabla f(x^{r+1})-\nabla f(x^{r})+e^r-e^{r+1}\|^2\\
    & \leq \frac{2}{\rho}\bigg(\|\nabla f(x^{r+1})-\nabla f(x^r)\|^2 + \|e^{r+1}-e^r\|^2\bigg)\\
    &\leq \frac{2}{\rho}\bigg(L_f^2\|x^{r+1}-x^r\|^2+2\|e^r\|^2 + 2\|e^{r+1}\|^2\bigg)\\
    &\leq \frac{2}{\rho}\bigg(L_f^2\|x^{r+1}-x^r\|^2+2(\rho+L_f)^2{\gamma}^2\bigg(\|x^{r+1}-x^r\|^2 + \|x^r-x^{r-1}\|^2\bigg)\bigg), 
\end{align}
where the last inequality is due to Lemma \ref{lemma: strong-convexity} and the way $x^r$ is chosen in Algorithm \ref{alg: inexact_admm}.

On the other hand:
\begin{equation}
\nonumber
    \begin{aligned}
        (B)&=\mathcal{L}(x^{r+1},y^{r+1},\lambda^r)-\mathcal{L}(x^r,y^r,\lambda^r)\\
        &=\mathcal{L}(x^{r+1},y^{r+1},\lambda^r)-\mathcal{L}(x^r,y^{r+1},\lambda^r)+\underbrace{\mathcal{L}(x^r,y^{r+1},\lambda^r)-\mathcal{L}(x^r,y^r,\lambda^r)}_{\leq 0 \text{ (due to update of $y$)}}\\
        &\leq \mathcal{L}(x^{r+1},y^{r+1},\lambda^r)-\mathcal{L}(x^r,y^{r+1},\lambda^r)\\
        & = \underbrace{\mathcal{L}(x^{r+1},y^{r+1},\lambda^r) - \mathcal{L}(x_\star^{r+1},y^{r+1},\lambda^r)}_{\leq\frac{L_f+\rho}{2}\|x^{r+1}-x_\star^{r+1}\|^2 } + \underbrace{\mathcal{L}(x_\star^{r+1},y^{r+1},\lambda^r)- \mathcal{L}(x^r,y^{r+1},\lambda^r)}_{\leq -\frac{\sigma(\rho)}{2}\|x_\star^{r+1}-x^r\|^2}\\
        &\leq \frac{L_f+\rho}{2}\|x^{r+1}-x_\star^{r+1}\|^2 -\frac{\sigma(\rho)}{2}\|x_\star^{r+1}-x^r\|^2 ,
    \end{aligned}
\end{equation}
Now note that $\|x^{r}-x_\star^{r+1}\|\geq (1-{\gamma})\|x^{r+1}-x^r\|$ and $\|x^{r+1}-x_\star^{r+1}\|\leq {\gamma} \|x^{r+1}-x^r\|$ because of the update rules of Algorithm \ref{alg: inexact_admm}. Plugging in these, we get
\begin{align}
    (B) \leq \bigg(\frac{{\gamma}^2 (\rho+L_f)}{2}-\frac{(1-{\gamma})^2\sigma(\rho)}{2}\bigg)\|x^{r+1}-x^r\|^2
\end{align}
Now combining the inequalities for $(A)$ and $(B)$, we have
\begin{align}
    &\mathcal{L}(x^{r+1},y^{r+1},\lambda^{r+1})-\mathcal{L}(x^r,y^r,\lambda^r) \\
    &\leq \underbrace{\bigg(\frac{2L_f^2}{\rho} + \frac{4(\rho+L_f)^2{\gamma}^2}{\rho}+ \frac{{\gamma}^2(\rho+L_f)}{2}-\frac{(1-{\gamma})^2\sigma(\rho)}{2}\bigg)}_{\alpha}\|x^{r+1}-x^r\|^2 + \underbrace{\frac{4(\rho+L_f)^2{\gamma}^2}{\rho}}_{\beta}\|x^r-x^{r-1}\|^2
\end{align}
Now for any $T$:
\begin{align}
    f_{\min} - {\gamma}(\rho+L_f)\bar{D}^2 
    &\leq\mathcal{L}(x^{T+1}, y^{T+1}, \lambda^{T+1})\\
    &= \mathcal{L}(x^0, y^0, \lambda^0)+ \sum_{r=0}^T \mathcal{L}(x^{r+1},y^{r+1},\lambda^{r+1})-\mathcal{L}(x^r,y^r,\lambda^r)\\
    &\leq (\alpha+\beta)\sum_{r=0}^{T-1}\|x^{r+1}-x^{r}\|^2 ~+~ \alpha \|x^{T+1}-x^T\|^2 + \mathcal{L}(x^0, y^0, \lambda^0)\\
    &\leq (\alpha+\beta) \sum_{r=0}^{T}\|x^{r+1}-x^{r}\|^2 + \mathcal{L}(x^0, y^0, \lambda^0),
\end{align}
where the last inequality is due to the fact the $\beta\geq0$. Now if the parameters are chosen appropriately such that $\alpha+\beta<0$, then the right hand side of the above inequality is decreasing as $T$ increases, while the left hand side is constant. Therefore, we have $\lim_{T\rightarrow\infty}\sum_{r=0}^T\|x^{r+1}-x^r\|^2<\infty$. Thus, $\lim_{r\rightarrow\infty}\|x^{r+1}-x^r\| = 0$.
\end{proof}

\begin{theorem}
Assume that all the assumptions of Lemma \ref{lemma: decrease_inexact} is satisfied. Then, For any limit point $(\xb, \yb, \lb)$ of the Algorithm \ref{alg: inexact_admm}, $\xb$ is a stationary solution of the problem.
\end{theorem}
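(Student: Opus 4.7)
The plan is to mimic the argument used for Theorem~\ref{Thm:ADMMQConvergence} in the exact case, but with careful tracking of the inexactness error~$e^r = \nabla f(x^r) + \lambda^r$. Start with any limit point $(\bar{x},\bar{y},\bar{\lambda})$ and pick a subsequence $(x^{r_t},y^{r_t},\lambda^{r_t}) \to (\bar{x},\bar{y},\bar{\lambda})$. Lemma~\ref{lemma: decrease_inexact} gives $\|x^{r+1}-x^r\|\to 0$, so along the subsequence we also have $x^{r_t+1}\to \bar{x}$. This is the main quantitative input that the decrease lemma has already done the heavy lifting for.

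Next, I would translate the inexactness condition into control of the gradient residual. The update rule guarantees $\|x^{r_t+1}-x_\star^{r_t+1}\|\le \gamma\|x^{r_t+1}-x^{r_t}\|$, and then Lemma~\ref{lemma: strong-convexity} upgrades this to $\|e^{r_t+1}\|\le (\rho+L_f)\gamma\|x^{r_t+1}-x^{r_t}\|\to 0$. Since $\lambda^{r_t+1} = -\nabla f(x^{r_t+1}) + e^{r_t+1}$, continuity of $\nabla f$ gives $\lambda^{r_t+1}\to -\nabla f(\bar{x})$; a parallel argument for the subsequence indices (using $\|x^{r_t}-x_\star^{r_t}\|\le \gamma\|x^{r_t}-x^{r_t-1}\|\to 0$) gives $\bar{\lambda}=\lim \lambda^{r_t}=-\nabla f(\bar{x})$ as well.

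Now I handle $y$. Because $\mathcal{A}$ is finite, after passing to a further subsequence we may assume $y^{r_t}=\bar{y}$ and $y^{r_t+1}=\hat{y}$ are constant. The dual update $\lambda^{r_t+1}=\lambda^{r_t}+\rho(x^{r_t+1}-y^{r_t+1})$ passes to the limit to give $\bar{\lambda}=\bar{\lambda}+\rho(\bar{x}-\hat{y})$, hence $\hat{y}=\bar{x}$. The $y$-update rule asserts
\[
\|\hat{y}-(x^{r_t}+\rho^{-1}\lambda^{r_t})\| \le \|a-(x^{r_t}+\rho^{-1}\lambda^{r_t})\|,\qquad\forall\,a\in\mathcal{A},
\]
and taking $t\to\infty$ together with $\hat{y}=\bar{x}$ and $\bar{\lambda}=-\nabla f(\bar{x})$ yields
\[
\|\bar{x}-(\bar{x}-\rho^{-1}\nabla f(\bar{x}))\| \le \|a-(\bar{x}-\rho^{-1}\nabla f(\bar{x}))\|,\qquad\forall\,a\in\mathcal{A},
\]
which is exactly the $\rho$-stationarity condition of Definition~\ref{def: stationarity}.

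The main obstacle relative to the exact proof is the step that replaced $\lambda^r=-\nabla f(x^r)$ (Lemma~\ref{lemma:lambda_r}) with an approximate identity; the point is that $\|e^r\|$ is controlled by $\|x^r-x^{r-1}\|$ through the inexactness condition and strong convexity, and the decrease lemma already guarantees the latter vanishes. Everything else is an almost verbatim transcription of the exact argument, including the finite-$\mathcal{A}$ subsequence refinement that pins $y^{r_t}$ and $y^{r_t+1}$ down to constants so the projection inequality can be passed to the limit termwise over~$a\in\mathcal{A}$.
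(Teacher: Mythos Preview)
Your proposal is correct and follows essentially the same argument as the paper's proof: both use Lemma~\ref{lemma: decrease_inexact} to force $\|x^{r+1}-x^r\|\to 0$, combine the inexactness bound with Lemma~\ref{lemma: strong-convexity} to kill $e^{r_t}$ and $e^{r_t+1}$ and hence identify $\bar{\lambda}=-\nabla f(\bar{x})=\lim\lambda^{r_t+1}$, refine the subsequence using finiteness of $\mathcal{A}$ to freeze $y^{r_t+1}=\hat{y}$, read off $\hat{y}=\bar{x}$ from the dual update, and pass the projection inequality to the limit. The only cosmetic difference is that the paper states $x^{r_t-1}\to\bar{x}$ explicitly before invoking $\|x^{r_t}-x_\star^{r_t}\|\le\gamma\|x^{r_t}-x^{r_t-1}\|$, whereas you fold this into the phrase ``parallel argument''; the content is identical.
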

\begin{proof}

Consider a sub-sequence $(\xrt,\yrt,\lrt)$, for $t=0,\cdots$ which converges to $(\xb, \yb, \lb)$. First of all due to Lemma \ref{lemma: decrease_inexact}, we know that $\lim_{t\rightarrow \infty}\|x^{r_t+1}-\xrt\|=0$ and $\lim_{t\rightarrow\infty}\|x^{r_t-1}-\xrt\|=0$. Thus, 
\begin{equation}
    \lim_{t\rightarrow \infty} x^{r_t+1} = \xb ~~\&~~\lim_{t\rightarrow \infty} x^{r_t-1} = \xb 
\end{equation}
Moreover, due to the updates of the algorithm
\begin{equation}
    \lim_{t\rightarrow\infty} \|x^{r_t+1}-x_\star^{r_t+1}\| \leq \lim_{t\rightarrow\infty} {\gamma} \|x^{r_t+1}-x^{r_t}\| = 0 ~~\&~~\lim_{t\rightarrow\infty} \|x^{r_t}-x_\star^{r_t}\| \leq \lim_{t\rightarrow\infty} {\gamma} \|x^{r_t}-x^{r_t-1}\| = 0
\end{equation}
Thus, $\lim_{t\rightarrow\infty} e^{r_t} = \lim_{t\rightarrow\infty} e^{r_t+1} = 0$, which means
\begin{align}
    \lb = \lim_{t\rightarrow \infty} \lrt = - \lim_{t\rightarrow \infty} (\nabla f(\xrt)-e^{r_t}) = -\nabla f(\xb)\\
    \lim_{t\rightarrow \infty} \lambda^{r_t+1} =  -\lim_{t\rightarrow \infty} (\nabla f(x^{r_t+1})-e^{r_{t+1}}) = -\nabla f(\xb) 
\end{align}
Thus, $\lim_{t\rightarrow \infty} \lambda^{r_t+1} = \lb$. 

Also, as $\mathcal{A}$ is finite, there exists a large enough T, such that $\yrt = \bar{y}$ for $t\geq T$. Again due to the fact that $\mathcal{A}$ is finite, we can re-fine the sub-sequence such that $y^{r_t+1} = \hat{y}$. Thus, without loss of generality assume that these two conditions hold, i.e. $\yrt = \bar{y}$ and $y^{r_t+1} = \hat{y}$ for all $t$ for an appropriately refined sub-sequence. This means that
\begin{equation}
\label{eq: y_hat}
    \hat{y} \in \arg\min_{a}\|a-(\xrt+\rho^{-1}\lrt)\|
\end{equation}
Moreover, $\lambda^{r_t+1} = \lrt + \rho(x^{r_t+1}-\hat{y})$. Taking the $\lim_{t\rightarrow\infty}$ from both sides, we get
\begin{equation}
    \hat{y} = \xb.
\end{equation}
Combining the above with \eqref{eq: y_hat} we can easily see that
\begin{equation}
    \|\xb-(\xrt+\rho^{-1}\lrt)\| \leq \|a_i-(\xrt+\rho^{-1}\lrt)\|,~i=0,\cdots,N
\end{equation}
Taking the limits $\lim_{t\rightarrow\infty}$ from both hand sides of the inequality for all the points $a_i$ we have
\begin{equation}
    \|\xb-(\xb+\rho^{-1}\lb)\| \leq \|a_i-(\xb+\rho^{-1}\lb)\|,~i=0,\cdots,N.
\end{equation}
Thus, 
\begin{equation}
    \xb\in \arg\min_{a\in\mathcal{A}}\|a-(\xb-\rho^{-1}\nabla f(\xb))\|,
\end{equation}
where we used the fact that $\lb = -\nabla f(\xb)$.
\end{proof}
{
\color{black}
\section{Convergence Analysis of PGD Algorithm}
\label{sec:PGDConvergence}
In this short section, we show that the convergence behavior of Projected Gradient Descent (PGD) algorithm can also be analyzed using Definition \ref{def: stationarity}. Each iteration of PGD  is gradient descent  followed by a projection to the discrete set~$\cA$. More precisely, PGD update rule is given by 
\begin{equation} \label{eq:PGD-Iterate}
    x^{r+1}\in\mathcal{P}_\mathcal{A}(x^r-\rho^{-1}\nabla_x f(x^{r}))
\end{equation}
\begin{lemma}
\label{lemma: PGD-descent}
Consider the PGD algorithm with the update rule $x^{r+1} \in \mathcal{P}_\mathcal{A}(x^r-\rho^{-1}\nabla_x f(x^{r}))$ with $\rho \geq L_f$. Then, for any $r \geq 1$ we have $f(x^r) \geq f(x^{r+1}) \geq f_{min}$.
\end{lemma}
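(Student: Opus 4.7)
The plan is to use the standard descent-lemma argument adapted to the discrete projection, essentially mimicking the continuous PGD analysis but exploiting that $x^r \in \mathcal{A}$ is a feasible competitor in the projection step.

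First, since $\nabla f$ is $L_f$-Lipschitz and $\rho \geq L_f$, the descent lemma (Assumption~\ref{assumption: lipschitz}) gives
\begin{equation}
\nonumber
f(x^{r+1}) \leq f(x^r) + \langle \nabla f(x^r), x^{r+1} - x^r \rangle + \frac{\rho}{2}\|x^{r+1} - x^r\|^2.
\end{equation}
I would then complete the square on the right-hand side to rewrite it as
\begin{equation}
\nonumber
f(x^r) + \frac{\rho}{2}\bigl\| x^{r+1} - (x^r - \rho^{-1}\nabla f(x^r)) \bigr\|^2 - \frac{1}{2\rho}\|\nabla f(x^r)\|^2.
\end{equation}

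Next, I would invoke the defining property of the projection in \eqref{eq:PGD-Iterate}: since $x^r \in \mathcal{A}$ is itself a candidate, optimality of $x^{r+1}$ yields
\begin{equation}
\nonumber
\bigl\| x^{r+1} - (x^r - \rho^{-1}\nabla f(x^r)) \bigr\|^2 \leq \bigl\| x^r - (x^r - \rho^{-1}\nabla f(x^r)) \bigr\|^2 = \rho^{-2}\|\nabla f(x^r)\|^2.
\end{equation}
Plugging this bound into the completed-square form collapses the two $\|\nabla f(x^r)\|^2$ terms and gives $f(x^{r+1}) \leq f(x^r)$, which is the desired monotonic decrease. The bound $f(x^{r+1}) \geq f_{\min}$ is immediate from Assumption~\ref{assumption: lowerbdd} together with the fact that $x^{r+1} \in \mathcal{A}$ by construction of the projection.

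There is no real obstacle here; the only subtlety worth flagging is that the argument crucially uses $x^r \in \mathcal{A}$ as an admissible point in the projection comparison, which is why the inequality needs $r \geq 1$ (or, equivalently, an initialization $x^0 \in \mathcal{A}$, as in the analogous \admmq{} setup in Algorithm~\ref{alg: ADMM exact}). Note also that the condition $\rho \geq L_f$ is exactly what is required to kill the quadratic term via the descent lemma; the same threshold appears in Lemma~\ref{lemma: lowerbound} and Lemma~\ref{lemma: optimality}, which is a reassuring consistency check.
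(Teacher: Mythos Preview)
Your proof is correct and is essentially the same argument as the paper's: both use the descent lemma to show that the quadratic $a \mapsto f(x^r) + \langle \nabla f(x^r), a - x^r\rangle + \tfrac{\rho}{2}\|a - x^r\|^2$ majorizes $f$, and then exploit that $x^{r+1}$ minimizes this quadratic over $\mathcal{A}$ while $x^r\in\mathcal{A}$ is a feasible competitor. The only cosmetic difference is that the paper phrases the projection step directly as minimizing the majorizer, whereas you complete the square first and then compare distances; the content is identical.
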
 
\begin{proof}
By the update rule of PGD algorithm, we have:
\begin{align*}
    x^{r+1}&\in \arg\min_{a\in\cA}\|a-x^r+\rho^{-1}\nabla f(x^r)\|^2 \\
    &\in \arg\min_{a\in\cA} f(a;x^r):= f(x^r) + \langle \nabla f(x^r),\; a-x^r \rangle +\frac{\rho}{2}\|a-x^r\|^2.
\end{align*}
Since $\rho \geq L_f$, we have $f(a;x^r)\geq f(a),\; \forall a$. Hence $f(x^{r})=f(x^{r};x^{r}) \geq f(x^{r+1};x^{r})\geq f(x^{r+1})$.
\end{proof}
\begin{theorem}
Assume that $f$ satisfies  Assumptions~\ref{assumption: lowerbdd}, \ref{assumption: lipschitz} and \ref{assumption: hessianbddbelow}. Assume further that $\rho$ is chosen large enough so that $\rho \geq L_f$. Let $\xb$ be a limit point of the PGD~algorithm. Then $\xb$ is a $\rho$--stationary point of the optimization problem~\eqref{eq:original}.
\end{theorem}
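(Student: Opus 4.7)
The plan is to exploit the discreteness of $\mathcal{A}$ so that one does not need to establish $\|x^{r+1}-x^r\|\to 0$ (which is the usual workhorse for continuous projected gradient results). The first observation is that since every iterate lies in the closed finite set $\mathcal{A}$, any limit point $\bar{x}$ lies in $\mathcal{A}$ and is an \emph{isolated} point of $\mathcal{A}$. Consequently, if $x^{r_t}\to\bar{x}$ along some subsequence, then in fact $x^{r_t}=\bar{x}$ for all sufficiently large~$t$. This turns the ``limit point'' hypothesis into a much stronger ``visited infinitely often'' statement, which is what I will use to conclude.

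Next I would recast the $\rho$-stationarity condition in terms of the quadratic surrogate $\widehat{f}(a;x) := f(x)+\langle\nabla f(x),a-x\rangle+\tfrac{\rho}{2}\|a-x\|^2$ used in the proof of Lemma~\ref{lemma: PGD-descent}. Expanding $\|a-(\bar{x}-\rho^{-1}\nabla f(\bar{x}))\|^2$ and completing the square shows that
\begin{equation*}
\arg\min_{a\in\mathcal{A}}\|a-(\bar{x}-\rho^{-1}\nabla f(\bar{x}))\| \;=\; \arg\min_{a\in\mathcal{A}}\widehat{f}(a;\bar{x}),
\end{equation*}
so $\bar{x}$ is $\rho$-stationary iff $\bar{x}\in\arg\min_{a\in\mathcal{A}}\widehat{f}(a;\bar{x})$. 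The PGD update rule \eqref{eq:PGD-Iterate} can therefore be read as $x^{r+1}\in\arg\min_{a\in\mathcal{A}}\widehat{f}(a;x^r)$.

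The conclusion follows by contradiction. Suppose $\bar{x}$ is not $\rho$-stationary, so $\min_{a\in\mathcal{A}}\widehat{f}(a;\bar{x}) < \widehat{f}(\bar{x};\bar{x}) = f(\bar{x})$. Pick $t$ large enough that $x^{r_t}=\bar{x}$; then $x^{r_t+1}\in\arg\min_{a\in\mathcal{A}}\widehat{f}(a;\bar{x})$, whence $\widehat{f}(x^{r_t+1};\bar{x})<f(\bar{x})$. Applying the descent lemma with $\rho\ge L_f$ (exactly as in the proof of Lemma~\ref{lemma: PGD-descent}) yields $f(x^{r_t+1})\le \widehat{f}(x^{r_t+1};\bar{x})<f(\bar{x})$. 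Combining this strict drop with the monotonicity $f(x^r)\le f(x^{r_t+1})$ from Lemma~\ref{lemma: PGD-descent} gives $f(x^r)<f(\bar{x})$, and hence $x^r\ne\bar{x}$, for every $r>r_t$. This contradicts the fact that $x^{r_{t'}}=\bar{x}$ for all sufficiently large $t'$, established in the first step.

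The main obstacle I anticipate is a subtle one: in the continuous PGD analysis one normally extracts $\|x^{r+1}-x^r\|\to 0$ and then takes limits in the projection relation using continuity of $\nabla f$; that route requires the strict condition $\rho>L_f$ and a bit of care on the tie-breaking rule in $\mathcal{P}_{\mathcal{A}}$. The discrete setting lets me sidestep this entirely via the ``isolated point'' observation and a strict-decrease-in-one-step argument, so the proof stays clean even under the boundary case $\rho=L_f$ and even though $\arg\min_{a\in\mathcal{A}}\widehat{f}(a;\bar{x})$ may be non-singleton.
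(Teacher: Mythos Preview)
Your proof is correct and takes a genuinely different route from the paper's. The paper argues \emph{directly}: it first observes that $\{f(x^r)\}$ is monotone and bounded, hence convergent with limit $f(\bar{x})$; then, for each fixed $a\in\mathcal{A}$, it passes to the limit in the surrogate inequality $f(x^{r_t+1})\le \widehat{f}(a;x^{r_t})$ using continuity of $\nabla f$, obtaining $f(\bar{x})\le \widehat{f}(a;\bar{x})$ for all $a$, which is exactly $\rho$-stationarity. By contrast, you exploit the finiteness of $\mathcal{A}$ to upgrade ``$x^{r_t}\to\bar{x}$'' to ``$x^{r_t}=\bar{x}$ eventually'' and then argue by contradiction via a one-step strict descent combined with monotonicity. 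Your argument is more elementary in that it never needs to pass a limit through $\nabla f$ or invoke convergence of $\{f(x^r)\}$, and as you note it handles the boundary case $\rho=L_f$ and tie-breaking without any fuss; the price is that it leans hard on finiteness of $\mathcal{A}$, whereas the paper's limit argument would transfer more readily to merely closed (infinite) constraint sets satisfying the same descent inequality.
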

\begin{proof}
By Lemma~\ref{lemma: PGD-descent} and compactness of $\mathcal{A}$, we know the sequence $f(x^r)$ is bounded and monotone, and hence convergent, i.e. $\lim_{r\to \infty}f(x^r)=\widebar{f}$. On the other hand, the continuity of $f(\cdot)$ implies that:
$$\exists \{x^{r_t}\}\quad\text{s.t.}\quad\lim_{t\to\infty} x^{r_t}=\widebar{x}\in\cA,\;\lim_{t\to\infty} f(x^{r_t})=f(\widebar{x}).$$
Hence, $\lim_{r\to\infty}f(x^r)=f(\widebar{x})$.
Moreover, for any fixed $a\in\cA$, we have 
$$ f(x^{r_t+1})\leq f(x^{r_t}) + \langle \nabla f(x^r_t),\;a-x^r \rangle +\frac{\rho}{2}\|a-x^{r_t}\|^2.$$
Letting $t\to\infty$, we obtain:
$$ f(\widebar{x})\leq f(\widebar{x}) + \langle \nabla f(\widebar{x}),\; a-\widebar{x} \rangle +\frac{\rho}{2}\|a-\widebar{x}\|^2,$$
which in turn implies that:
\begin{equation*}
    \widebar{x} \in \arg\min_{a\in\cA} f(\widebar{x}) + \langle \nabla f(\widebar{x}),\; a-\widebar{x} \rangle +\frac{\rho}{2}\|a-\widebar{x}\|^2
\end{equation*}
or equivalently,
\begin{equation*}
    \widebar{x} \in \arg\min_{a\in\cA}\|a-\widebar{x}+\rho^{-1}\nabla f(\widebar{x})\|^2.
\end{equation*}
Hence, $\widebar{x}$ is a $\rho$-stationary point.
\end{proof}

}
\section{On the update rules of \admms{}  and its behavior}
\label{app: derivation_algo_4}

The update rules of $x$ and $\lambda$ variables are similar to the \admmq~algorithm. Here we only present the $y$~update rule.
Let us define  $\beta'=\beta\rho^{-1}$, $z^{r+1}=x^r+\rho^{-1}\lambda^r$ and $\widetilde{z}^{r+1}=\mathcal{P}_\cA(z^{r+1})$. Following the steps of regular ADMM, the update rule of $y$ can be written as
\begin{equation}
    \begin{aligned}
    y^{r+1}&=\arg\min_{y}\;  \mathcal{L} (x^r,y,\lambda^r) \\
    &=\arg\min_{y}\; f(x^r) + \langle \lambda^r, x^r- y\rangle + \frac{\rho}{2} \|x^r-y\|_2^2 + \beta \mathcal{S}_{\cA} (y) \\
    &=\arg\min_{y}\; \frac{1}{2} \|y-x^r-\rho^{-1}\lambda^r\|^2+\beta\rho^{-1}\mathcal{S}_\cA(y)\\
    &=\arg\min_{y}\; \frac{1}{2} \|y-z^{r+1}\|^2+\beta'\|y-\mathcal{P}_\cA(z^{r+1}) \|_2\\
    &=\arg\min_{y}\; \frac{1}{2} \|y-z^{r+1}\|^2+\beta'\|y-\widetilde{z}^{r+1} \|_2
    \end{aligned}
\end{equation}
If $y^{r+1}\neq\widetilde{z}^{r+1}$, then we can take the derivative of the above function and set it to $0$ to get the update rule of $y$:
\begin{equation}
\label{eq:notEqualADMM-SUpdate}
\begin{aligned}
&(y^{r+1}-z^{r+1})+\beta'\frac{y^{r+1}-\widetilde{z}^{r+1}}{\|y^{r+1}-\widetilde{z}^{r+1}\|}_2=0\\
&\Longrightarrow y^{r+1}=z^{r+1}+\beta'\frac{\widetilde{z}^{r+1}-z^{r+1}}{\|\widetilde{z}^{r+1}-z^{r+1}\|_2}
\end{aligned}
\end{equation}
Now we need to find out when the solution is $y^{r+1}= \widetilde{z}^{r+1}$ and when it is given by equation~\eqref{eq:notEqualADMM-SUpdate}. Using the sub-gradient of the function~$\|y-\widetilde{z}^{r+1}\|_2$ at the point~$y= \widetilde{z}^{r+1}$, we obtain that 
\[
y^{r+1} = \widetilde{z}^{r+1} \quad \textrm{if}\quad \|\widetilde{z}^{r+1} - z^{r+1}\|_2 \leq \beta^\prime
\]
Combining this equation with~\eqref{eq:notEqualADMM-SUpdate}, we obtain the following update rule for $y$:
\[
y^{r+1}=\left\{
		\begin{array}{cl}
		z^{r+1} + \dfrac{\beta'(\widetilde{z}^{r+1}-z^{r+1})}{\|\widetilde{z}^{r+1}-z^{r+1}\|_2} &  ,\;\beta'\leq \|\widetilde{z}^{r+1}-z^{r+1}\|_2  \\
		\widetilde{z}^{r+1} & ,\;\beta' >  \|\widetilde{z}^{r+1}-z^{r+1}\|_2
		\end{array}
		\right.
\]

Notice that this update rule would keep $y^{r+1}$ very close to the set $\mathcal{A}$, especially when $\beta$ is large. In fact in the extreme case where $\beta$ is large enough, i.e. when  $\beta^\prime = \frac{\beta}{\rho}\geq \sup_{z} \| z - \cP_\cA(z)\|$, the update rule of $y$ in \admms~coincide with the update rule of $y$ in \admmq~algorithm. Obviously due to the fact that $y^{r}$ is not in $\mathcal{A}$, we cannot expect the \admms~to converge to a stationary solution defined in Definition \ref{def: stationarity}. But in what follows we show that under assumptions similar to what we used for \admmq~, we can actually show that the Lagrangian function converges in \admms.

Most of the proofs follow the same steps as in the convergence analysis of \admmq. Thus, they are mostly omitted and we only focus on the overall steps and the results here.
First of all it is easy to verify that the result of Lemma~\ref{lemma:lambda_r} is also true for~\admms, i.e. $\lr = -\nabla_x f(x^r)$. 
Moreover, Let us assume that the $y^r$ iterates stay bounded, i.e. $y^r\in \mathcal{A}^\prime$, where $\mathcal{A}^\prime$ is a compact set. Note that this is a reasonable assumption due to the proximity of $y^r$ to the bounded set $\mathcal{A}$. As $f$ is continuous, we can assume there exists a $f_{\min}$ such that $f(y)\geq f_{\min}$ for all $y\in\mathcal{A}^\prime$. Under these assumptions we have the following lemma, which states that the Lagrangian function is lower bounded.

\begin{lemma}
\label{lemma: lowerbound_admms}
If $\rho\geq L_f$, we have
$ 
\mathcal{L}(\xr,\yr,\lr)\geq f(\yr) \geq  f_{\min},  \quad \forall r\geq 1.$
\end{lemma}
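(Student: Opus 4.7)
\textbf{Proof proposal for Lemma~\ref{lemma: lowerbound_admms}.} The plan is to mirror the argument used to prove Lemma~\ref{lemma: lowerbound} for \admmq, making the necessary adjustments to account for the soft indicator term $\beta\mathcal{S}_\cA(y)$ that replaces $\mathcal{I}_\cA(y)$ in the augmented Lagrangian of \admms.

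First I would verify that the identity $\lambda^r = -\nabla f(x^r)$ (the content of Lemma~\ref{lemma:lambda_r}) continues to hold for \admms~iterates with $r \geq 1$. The reasoning is unchanged: the $x$--update is $x^{r+1} = \arg\min_x \mathcal{L}(x,y^{r+1},\lambda^r)$, so the first--order optimality condition gives $\nabla f(x^{r+1}) + \lambda^r + \rho(x^{r+1} - y^{r+1}) = 0$, and the definition of the $\lambda$--update identifies the last two terms with $\lambda^{r+1}$. This step does not depend on the form of the $y$--update and is therefore insensitive to whether we use a hard or soft indicator.

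Next I would substitute $\lambda^r = -\nabla f(x^r)$ into $\mathcal{L}(x^r,y^r,\lambda^r)$, which yields
\begin{equation*}
\mathcal{L}(x^r,y^r,\lambda^r) = f(x^r) + \langle \nabla f(x^r), y^r - x^r\rangle + \frac{\rho}{2}\|x^r - y^r\|^2 + \beta\,\mathcal{S}_\cA(y^r).
\end{equation*}
Since $\rho \geq L_f$, the descent lemma for $L_f$--smooth functions (Assumption~\ref{assumption: lipschitz}) gives $f(x^r) + \langle \nabla f(x^r), y^r - x^r\rangle + \frac{\rho}{2}\|x^r - y^r\|^2 \geq f(y^r)$. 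The soft indicator term satisfies $\beta\,\mathcal{S}_\cA(y^r) \geq 0$ by definition, since $\beta > 0$ and $\mathcal{S}_\cA$ is a distance. Chaining these yields $\mathcal{L}(x^r,y^r,\lambda^r) \geq f(y^r)$.

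To close the inequality $f(y^r) \geq f_{\min}$, I would invoke the boundedness hypothesis on the $y$--iterates stated right before the lemma: $y^r \in \mathcal{A}'$ for some compact set $\mathcal{A}' \supset \mathcal{A}$, together with continuity of $f$, so that $f_{\min} := \min_{y\in\mathcal{A}'} f(y)$ is well defined and $f(y^r) \geq f_{\min}$. I do not expect a major obstacle: the only subtlety is that, unlike in \admmq, $y^r$ need not land in the discrete set $\mathcal{A}$, so one cannot simply reuse Assumption~\ref{assumption: lowerbdd} verbatim; one must instead rely on the (already stated) compactness and continuity to get a finite lower bound on the relaxed iterates. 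Once that is in place, the combination of descent--lemma, nonnegativity of $\mathcal{S}_\cA$, and boundedness of $\{y^r\}$ immediately gives the claim.
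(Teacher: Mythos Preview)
Your proposal is correct and mirrors exactly the argument the paper has in mind: it explicitly says the proof is similar to that of Lemma~\ref{lemma: lowerbound} and omits it, and your adaptation---using $\lambda^r=-\nabla f(x^r)$, the descent lemma with $\rho\ge L_f$, the nonnegativity of $\beta\,\mathcal{S}_\cA(y^r)$, and the compactness of $\mathcal{A}'$ to bound $f(y^r)$ from below---is precisely the intended modification.
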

The proof is similar to the proof of Lemma~\ref{lemma: lowerbound} and is omitted. 
Moreover, we have the following result which is similar to Lemma~\ref{lemma: decrease} for \admmq.

\begin{lemma}\label{lemma: decrease_admms}
Define $\sigma(\rho) \triangleq \rho-\mu$.  We have 
\begin{equation}
    \mathcal{L}(x^{r+1},y^{r+1},\lambda^{r+1})-\mathcal{L}(x^r,y^r,\lambda^r)\leq (\rho^{-1}L_f^2-\frac{\sigma(\rho)}{2})\left\|x^{r+1}-x^r\right\|^2.
\end{equation}
\end{lemma}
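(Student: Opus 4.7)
The plan is to mirror the three–term telescoping argument used in the proof of Lemma~\ref{lemma: decrease}, exploiting the fact that replacing the hard indicator $\mathcal{I}_\cA(y)$ with the soft indicator $\mathcal{S}_\cA(y)$ affects only the $y$–subproblem; the $x$– and $\lambda$–slices of $\mathcal{L}$ are structurally identical to the \admmq{} case because $\mathcal{S}_\cA$ depends only on $y$. Concretely, I would write
\begin{equation*}
\mathcal{L}(\xrn,\yrn,\lrn) - \mathcal{L}(\xr,\yr,\lr)
= \underbrace{\mathcal{L}(\xrn,\yrn,\lrn) - \mathcal{L}(\xrn,\yrn,\lr)}_{(A)}
+ \underbrace{\mathcal{L}(\xrn,\yrn,\lr) - \mathcal{L}(\xr,\yrn,\lr)}_{(B_1)}
+ \underbrace{\mathcal{L}(\xr,\yrn,\lr) - \mathcal{L}(\xr,\yr,\lr)}_{(B_2)}
\end{equation*}
and bound each piece separately, following Lemma~\ref{lemma: decrease}.

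For the dual–ascent term $(A)$, the same identity as in the \admmq{} case applies: $(A) = \langle \lrn - \lr, \xrn - \yrn\rangle = \rho^{-1}\|\lrn - \lr\|^2$. Since the $x$–update in \admms{} still reads $\xrn = \arg\min_x \mathcal{L}(x, \yrn, \lr)$ and the objective is differentiable in $x$ with $\mathcal{S}_\cA(\yrn)$ contributing no $x$–dependence, the first–order condition gives $\nabla f(\xrn) = -\lrn$ (and, analogously, $\nabla f(\xr) = -\lr$ for $r\geq 1$, by invoking the analog of Lemma~\ref{lemma:lambda_r} that I would flag holds verbatim for \admms{}). Combined with the $L_f$–Lipschitz gradient assumption, this yields $\|\lrn - \lr\| \leq L_f\|\xrn - \xr\|$ and therefore $(A)\leq \rho^{-1}L_f^2\|\xrn-\xr\|^2$.

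For the two primal pieces, $(B_2)\leq 0$ is immediate from the definition of the $y$–update, which by construction minimizes $\mathcal{L}(\xr,\cdot,\lr)$ over $y$, irrespective of whether the regularizer is the hard or soft indicator. For $(B_1)$, the key observation is that $\mathcal{L}(\cdot,\yrn,\lr)$ remains $\sigma(\rho)$–strongly convex in $x$ with $\sigma(\rho)=\rho-\mu$: the $\mathcal{S}_\cA(\yrn)$ and $\langle\lr,-\yrn\rangle$ terms are $x$–independent constants, the quadratic penalty $\frac{\rho}{2}\|x-\yrn\|^2$ contributes modulus $\rho$, and $f$ is $\mu$–weakly convex by Assumption~\ref{assumption: hessianbddbelow}. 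The optimality of $\xrn$ for this strongly convex $x$–subproblem therefore gives $(B_1)\leq -\frac{\sigma(\rho)}{2}\|\xrn-\xr\|^2$.

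Summing the three bounds delivers the claimed inequality. I do not expect any genuine obstacle here: the only thing that has to be checked carefully is that the soft indicator does not disturb either the optimality identity $\nabla f(\xrn) = -\lrn$ or the $\sigma(\rho)$–strong convexity of the $x$–subproblem, and both survive because $\mathcal{S}_\cA$ is a function of $y$ alone. Consequently the entire calculation is structurally identical to Lemma~\ref{lemma: decrease}, and no bookkeeping involving the explicit closed form of the $y$–update in Algorithm~\ref{alg: ADMM soft} is required for the descent inequality itself.
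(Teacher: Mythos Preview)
Your proposal is correct and follows essentially the same approach as the paper. The paper itself only states that ``the proof of this lemma also follows the same arguments provided in the proof of Lemma~\ref{lemma: decrease}'' and omits the details; your write-up reproduces precisely that argument, correctly noting that $\mathcal{S}_\cA$ depends only on $y$ so that the optimality identity $\nabla f(\xrn)=-\lrn$, the $\sigma(\rho)$--strong convexity of the $x$--subproblem, and the $(B_2)\leq 0$ step from the $y$--minimization all carry over unchanged.
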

The proof of this lemma also follows the same arguments provided in the proof of Lemma~\ref{lemma: decrease}. 
Based on these two lemmas, we have that augmented Lagrangian function is decreasing and lower bounded when $\rho$ is chosen appropriately. Thus, it has to converge:
\begin{proposition}
If $\rho$ is chosen such that $\rho^{-1}L_f^2-\frac{\sigma(\rho)}{2}$<0, then $\mathcal{L}(x^r,y^r,\lambda^r)$ is decreasing and lower bounded. Thus, it converges.
\end{proposition}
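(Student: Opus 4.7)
The plan is to combine the two lemmas stated immediately before the proposition (Lemma~\ref{lemma: lowerbound_admms} and Lemma~\ref{lemma: decrease_admms}) with the classical monotone convergence theorem for real sequences. The statement is almost a direct consequence of these two ingredients, so the ``proof'' is really just the observation that a monotonically non-increasing sequence which is bounded below must converge.

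First I would invoke Lemma~\ref{lemma: decrease_admms} to show that the augmented Lagrangian values form a monotonically non-increasing sequence. Specifically, under the hypothesis $\rho^{-1}L_f^2 - \sigma(\rho)/2 < 0$, the coefficient on the right-hand side of the descent inequality is strictly negative, so
\[
\mathcal{L}(x^{r+1},y^{r+1},\lambda^{r+1}) - \mathcal{L}(x^r,y^r,\lambda^r) \leq 0
\]
for every $r$. Hence the sequence $\{\mathcal{L}(x^r,y^r,\lambda^r)\}_{r\geq 0}$ is non-increasing.

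Next, I would invoke Lemma~\ref{lemma: lowerbound_admms}, which (under $\rho \geq L_f$, a condition implied by the hypothesis for any reasonable $\mu \geq 0$) gives the uniform lower bound
\[
\mathcal{L}(x^r,y^r,\lambda^r) \geq f(y^r) \geq f_{\min}
\]
for all $r \geq 1$. Here I would briefly note that the boundedness assumption on the iterates $y^r$ (which keeps $y^r$ in a compact set $\mathcal{A}'$ close to $\mathcal{A}$) together with continuity of $f$ ensures that $f_{\min}$ is a finite real number, so the lower bound is nontrivial. Combining monotone non-increase with a finite lower bound, the monotone convergence theorem for real sequences immediately yields the existence of a finite limit $\lim_{r\to\infty}\mathcal{L}(x^r,y^r,\lambda^r)$, proving the proposition.

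I do not expect a genuine obstacle here, because both ingredients are already established; the only subtlety worth flagging in the write-up is that the hypothesis $\rho^{-1}L_f^2 - \sigma(\rho)/2 < 0$ is compatible with (and in fact stronger than, for typical $\mu$) the assumption $\rho \geq L_f$ needed for the lower bound, so the two lemmas are simultaneously applicable. If one wanted to be pedantic, one could also remark that the argument gives summability of $\|x^{r+1}-x^r\|^2$ as a byproduct, by telescoping the descent inequality against the finite lower bound, but this is not required for the stated convergence of the Lagrangian.
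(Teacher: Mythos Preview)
Your proposal is correct and matches the paper's approach exactly: the paper does not even give a formal proof of this proposition, merely noting that ``Based on these two lemmas, we have that augmented Lagrangian function is decreasing and lower bounded when $\rho$ is chosen appropriately. Thus, it has to converge.'' Your write-up spells out precisely this monotone-convergence argument, together with the helpful (but not strictly required) observations about compatibility of the hypotheses and the summability byproduct.
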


\section{Simulations on Convex Quadratic Case}
\label{app: simulation_quad_all}
Recall in section~\ref{sec: numerical_experiments}, we solve the following problem:
\begin{equation}
    \min_{x} \;\; \frac{1}{2}x^\top Q x + b^\top x \quad \st \;\; x\in \cA \triangleq v\mathbb{Z}^d,
\end{equation}
for some given $Q\in \mathbb{R}^{d\times d}$, $b\in \mathbb{R}^{d}$, and $v\in \mathbb{Z}^+$. We generate matrix $Q$ via the rule~$Q = \widetilde{Q}^\top \widetilde{Q} + \widetilde{q} \widetilde{q}^\top$, where $\widetilde{Q}_{ij}\sim N(0, 1),\;\widetilde{q}_{i}\sim N(0,\sigma_{\widetilde{q}}^2),\;1\leq i, j\leq d$. We follow the same procedure as discussed in section~\ref{sec: numerical_experiments}; see Table~\ref{tab: hyper_parameters} for the hyper-parameters used in \admmq, \admms~and \admmr. We report the results for the following combinations of $v$, $d$ and $\sigma_{\widetilde{q}}^2$ as seen in Table~\ref{tab:parameters_pairs}.

\textbf{Results.} Most of the observations  in section~\ref{sec: numerical_experiments} carry over here regardless of the values of $d$ and $\sigma_{\widetilde{q}}^2$. 
More precisely, \admmq{} outperforms PGD and GD+Proj with   large margins. Both \admms{} and \admmr{}  not only have better median final objective values, but also smaller variance as compared with \admmq. More importantly, the median tends to overlap with the $25\%$ quantile, see Figure~\ref{fig:Quantiles_d16_10}. It means the objective of at least 25 runs are exactly the same as the minimal objective over 50 runs. We also observe that \admms~or \admmr~is not always better than \admmq. As we conduct more experiments, we observed cases that \admms~yields large objective value; see, e.g., instance~3 in Figure~\ref{fig:Quantiles_d16_30}, and compare with Figure~\ref{fig:Quantiles}. Having said that, we observe that \admms~and \admmr~outperform \admmq~in most instances.
\vfill

\begin{table}
\centering
\begin{tabular}{ccc}
\hline
\multicolumn{3}{c}{Parameter Pairs} \\ \hline
$v$ & $d$ & $\sigma_{\widetilde{q}}^2$ \\
8 & 8 & 30 \\
8 & 16 & 30 \\
8 & 32 & 30 \\
8 & 64 & 30 \\
8 & 16 & 10 \\
8 & 16 & 50 \\
8 & 16 & 70 \\ \hline
\end{tabular}
\caption{Parameter pairs used in the experiment}
\label{tab:parameters_pairs}
\end{table}

\begin{figure}[H]
    \centering
    \resizebox{\textwidth}{!}{%
    \includegraphics[]{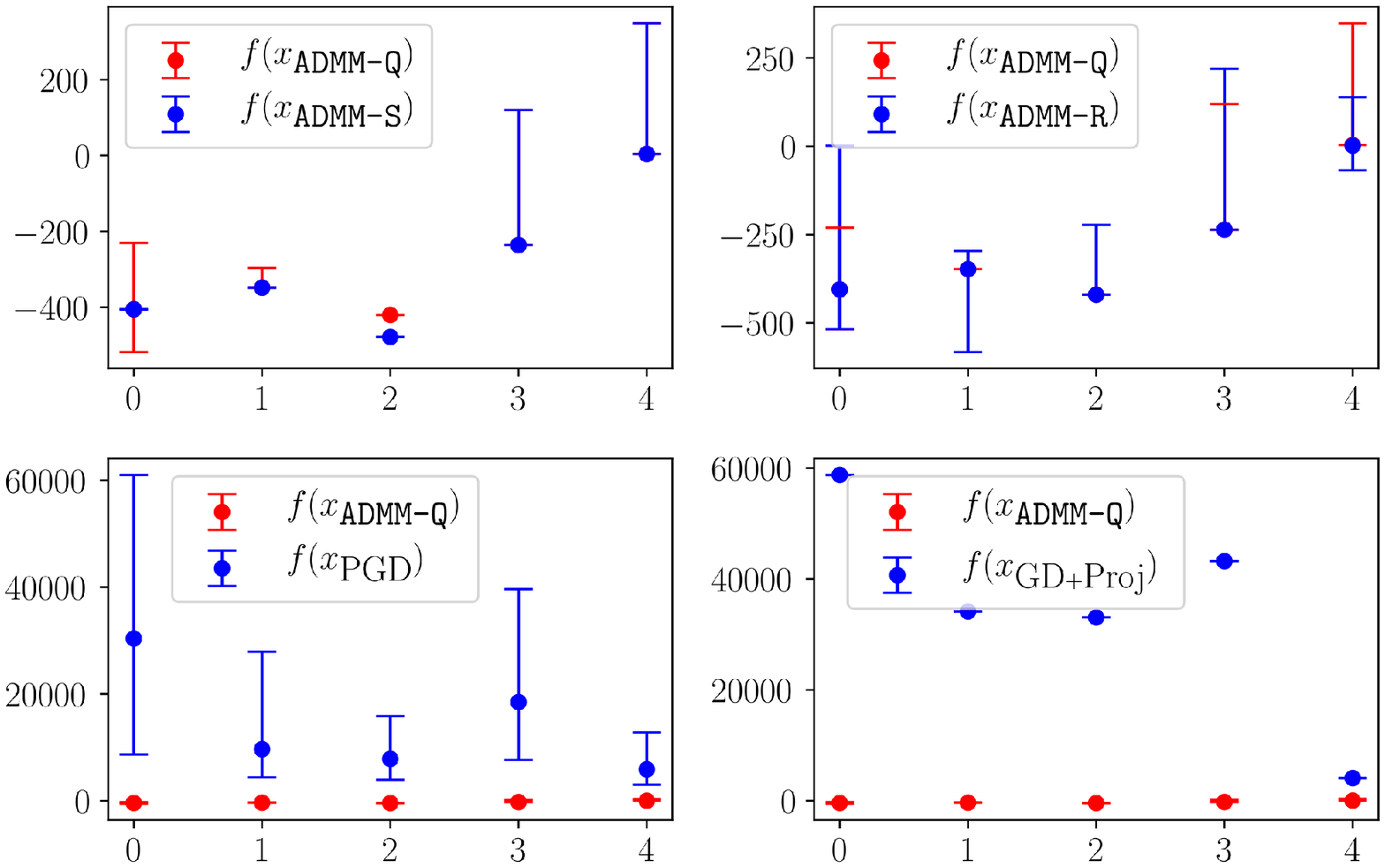}
    }
    \caption{Performance of \admmq, \admms, \admmr~ and PGD on different problem instances with $d=8$, $\sigma_{\widetilde{q}}^2=30$}
    \label{fig:Quantiles_d8_30}
\end{figure}

\begin{figure}[H]
    \centering
    \resizebox{\textwidth}{!}{%
    \includegraphics[]{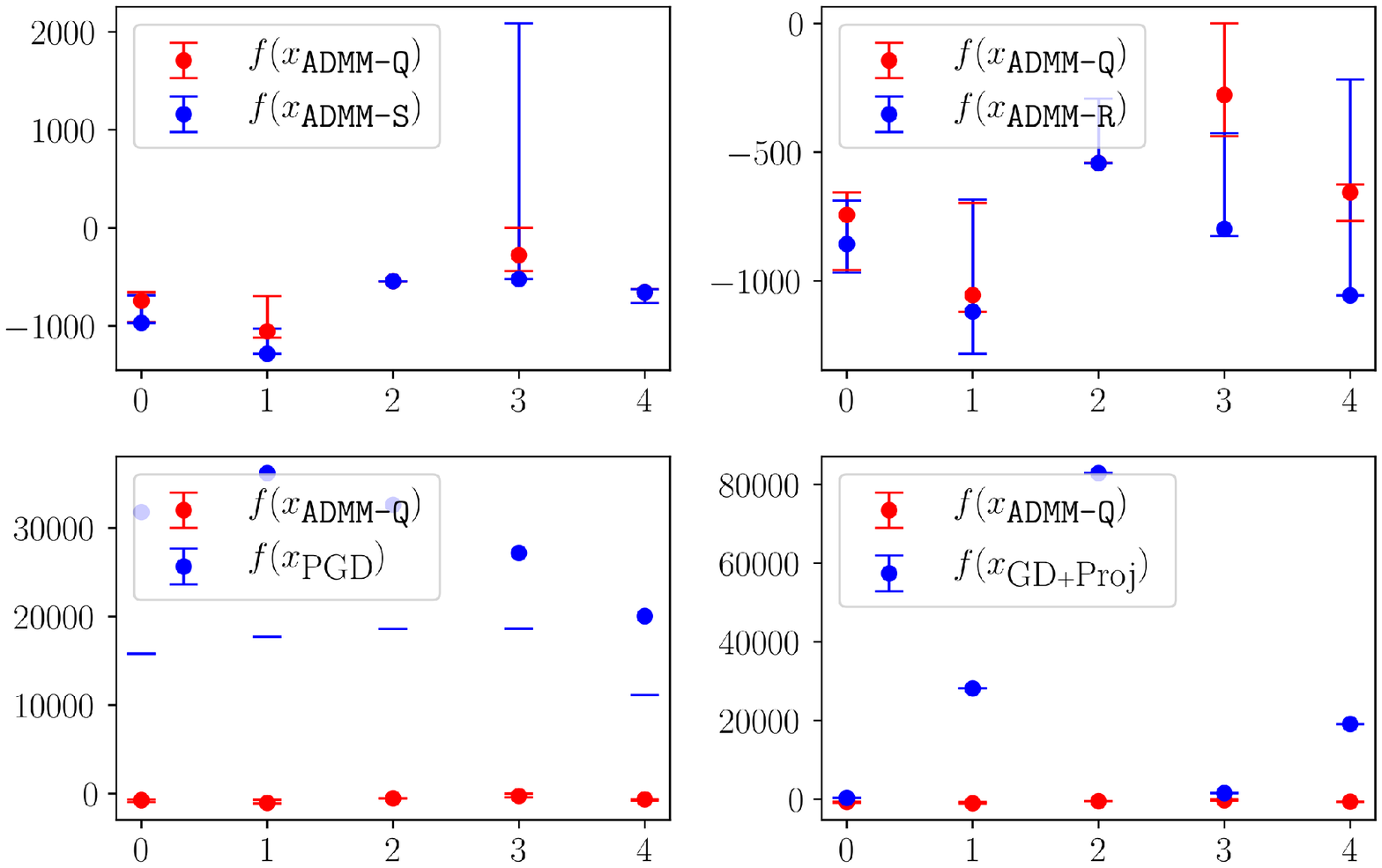}
    }
    \caption{Performance of \admmq, \admms, \admmr~ and PGD on different problem instances with $d=16$, $\sigma_{\widetilde{q}}^2=30$, note the difference compared with Figure~\ref{fig:Quantiles}}
    \label{fig:Quantiles_d16_30}
\end{figure}

\begin{figure}[H]
    \centering
    \resizebox{\textwidth}{!}{%
    \includegraphics[]{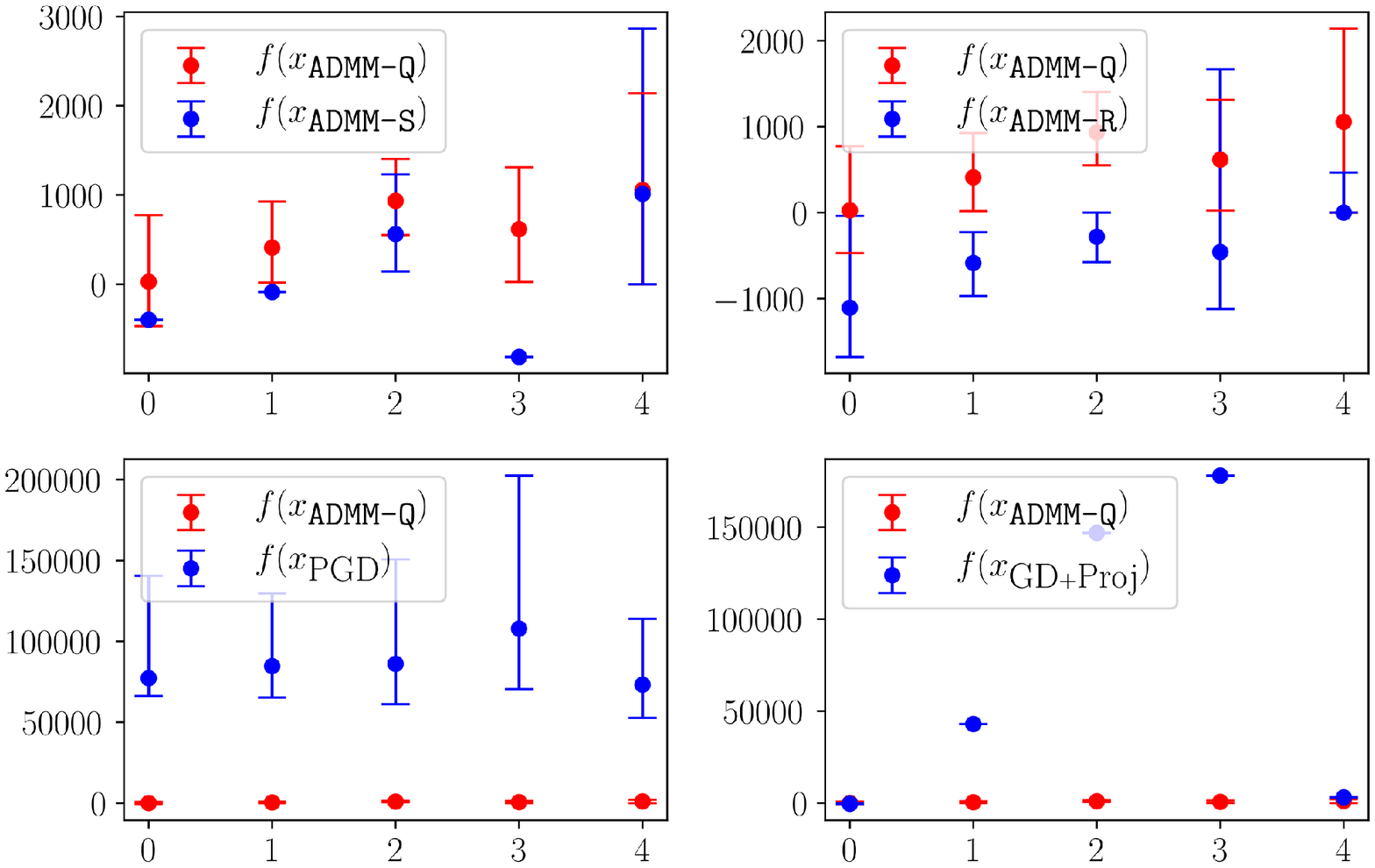}
    }
    \caption{Performance of \admmq, \admms, \admmr~ and PGD on different problem instances with $d=32$, $\sigma_{\widetilde{q}}^2=30$}
    \label{fig:Quantiles_d32_30}
\end{figure}

\begin{figure}[H]
    \centering
    \resizebox{\textwidth}{!}{%
    \includegraphics[]{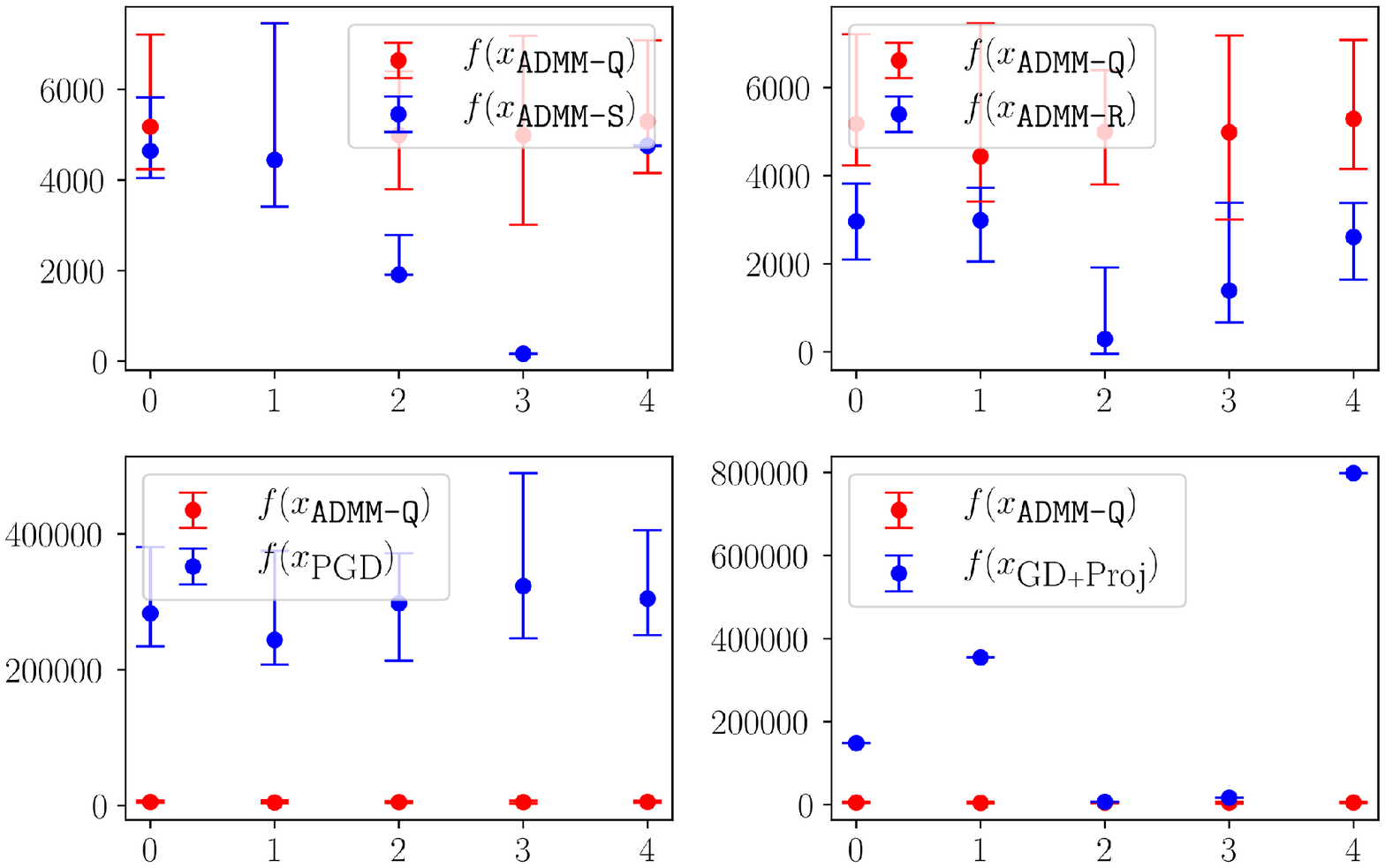}
    }
    \caption{Performance of \admmq, \admms, \admmr~ and PGD on different problem instances with $d=64$, $\sigma_{\widetilde{q}}^2=30$}
    \label{fig:Quantiles_d64_30}
\end{figure}

\begin{figure}[H]
    \centering
    \resizebox{\textwidth}{!}{%
    \includegraphics[]{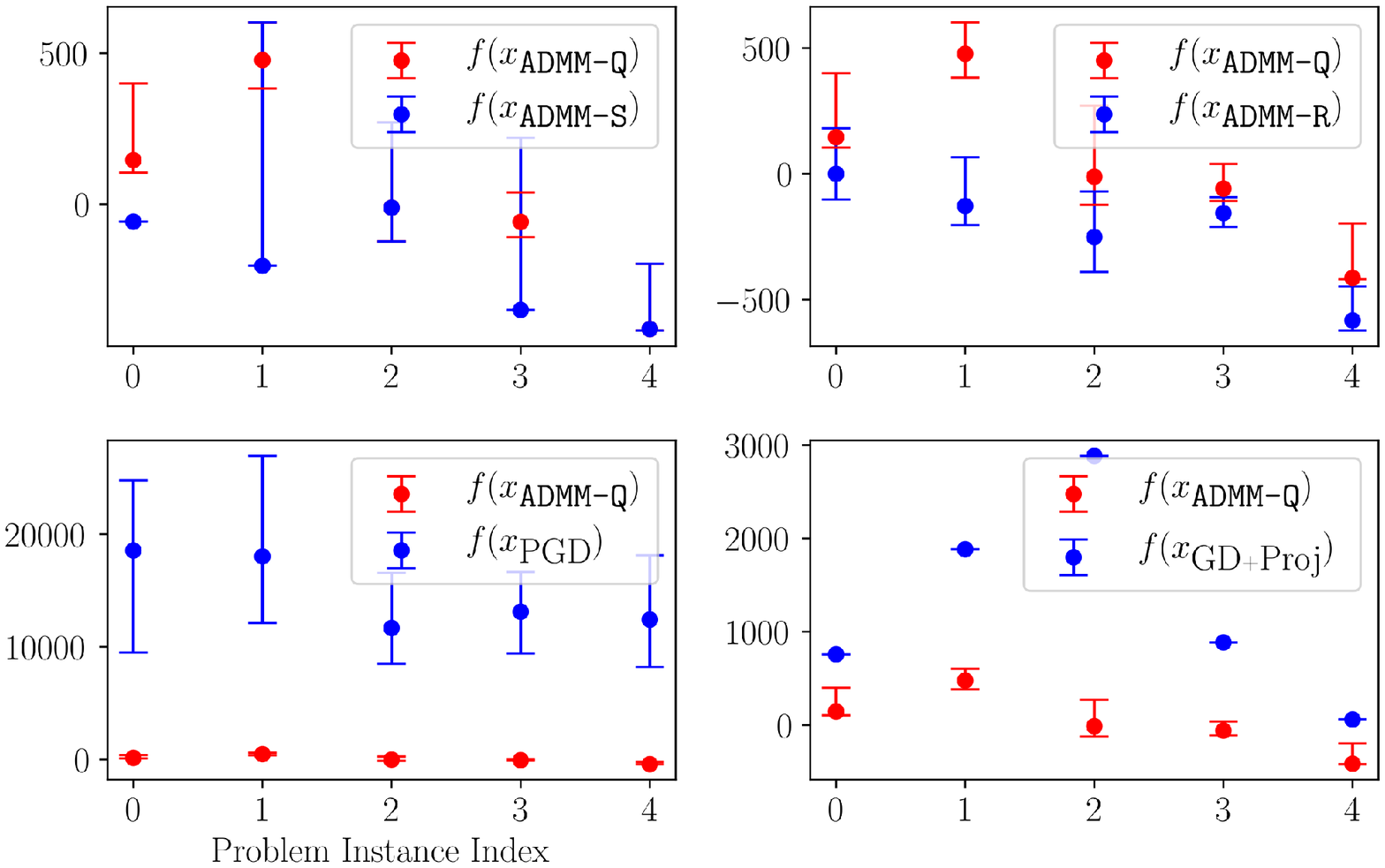}
    }
    \caption{Performance of \admmq, \admms, \admmr~ and PGD on different problem instances with $d=16$, $\sigma_{\widetilde{q}}^2=10$}
    \label{fig:Quantiles_d16_10}
\end{figure}

\begin{figure}[H]
    \centering
    \resizebox{\textwidth}{!}{%
    \includegraphics[]{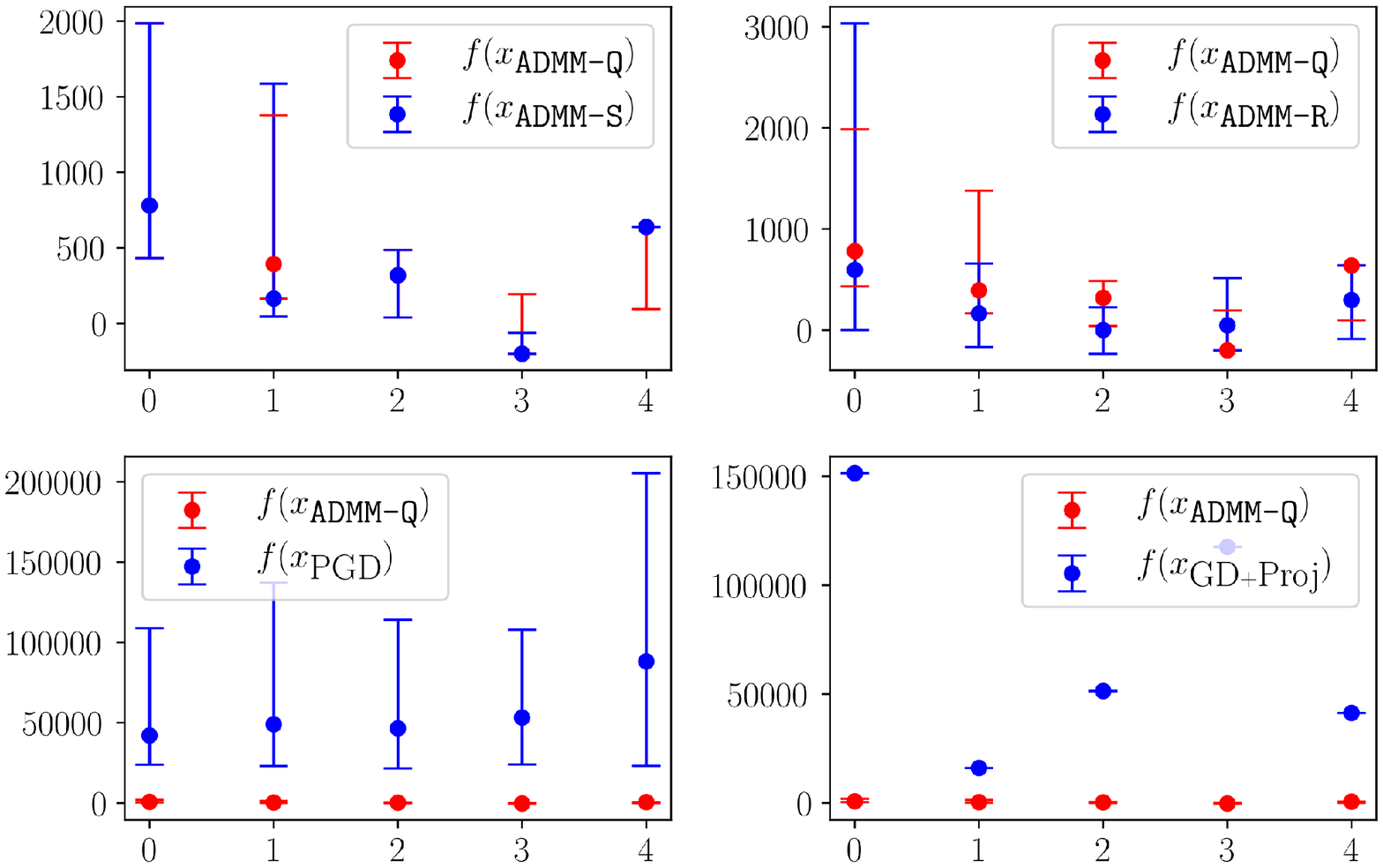}
    }
    \caption{Performance of \admmq, \admms, \admmr~ and PGD on different problem instances with $d=16$, $\sigma_{\widetilde{q}}^2=50$}
    \label{fig:Quantiles_d16_50}
\end{figure}

\begin{figure}[H]
    \centering
    \resizebox{\textwidth}{!}{%
    \includegraphics[]{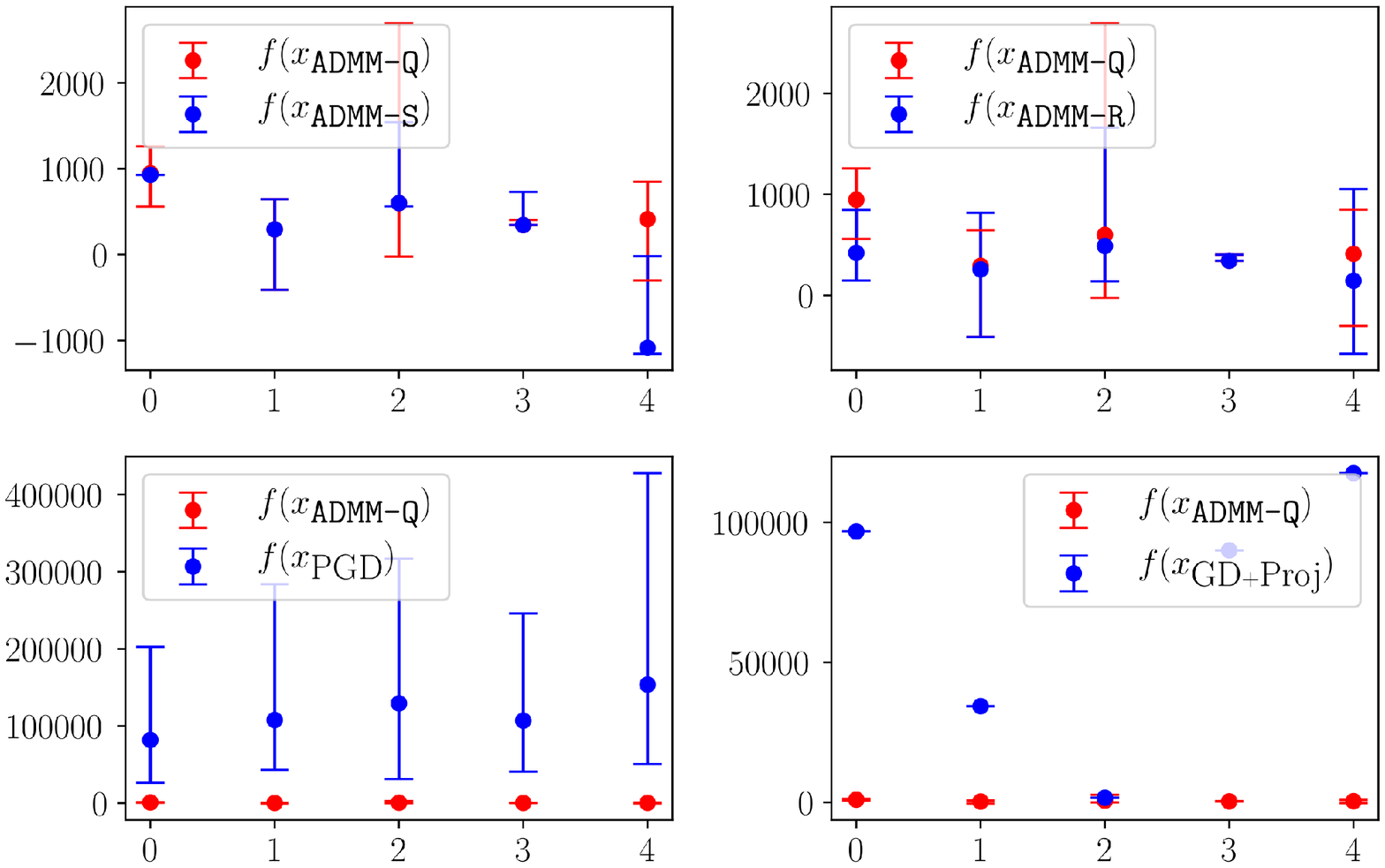}
    }
    \caption{Performance of \admmq, \admms, \admmr~ and PGD on different problem instances with $d=16$, $\sigma_{\widetilde{q}}^2=70$}
    \label{fig:Quantiles_d16_70}
\end{figure}

\begin{table}
\centering
\begin{tabular}{@{}ccc@{}}
\toprule
Algorithm             & \multicolumn{2}{c}{Hyper-parameters} \\ \midrule
\admmq                  & None & $\rho=10^{-k}, k\in \mathbb{Z}, -6\leq k \leq 2$            \\
\admms             & $\beta=10^{-5},10^{-4.5},10^{4},\ldots,10^{4.5},10^{5}$        & $\rho=10^{-k}, k\in \mathbb{Z}, -6\leq k \leq 2$        \\
\admmr & $p_i^{r}= 0.01, 0.1,0.3,0.5,0.7,0.9,0.99$            & $\rho=10^{-k}, k\in \mathbb{Z}, -6\leq k \leq 2$        \\ \bottomrule
\end{tabular}
\caption{Hyper-parameters used for \admmq, \admms~and \admmr}
\label{tab: hyper_parameters}
\end{table}

\section{Simulations on Neural Networks}
\label{app: simulation_nn_all}

Table~\ref{tab:Mnist-simple-network_pretrain_all} shows the performance of different algorithms on MNIST dataset. It suggests pre-training (with non-binarized weights) further improves the performance of ADMM-based methods. It is worth mentioning that, with pre-training, \admmq{} and its variants and even PGD algorithm are converging extremely fast, sometimes within as few as $3$ epochs. While in the presence of pre-training, and PGD and ADMM-based algorithms all work reasonably well, PGD is much more sensitive to initialization. In particular, omitting the pre-training phase drops the performance of PGD much more than the performance of ADMM-based methods. Table~\ref{tab:CIFAR-results_retrain_all} shows the results of the experiments on CIFAR-10 dataset. The observation is consistent with that from the MNIST dataset. Pre-training significantly improves the performance of the binarized models including both ADMM-based and PGD. Binarized models trained by ADMM-based algorithms with pre-training have comparable performance with the full precision model.

\begin{table}[H]
\centering
\begin{tabular}{@{}lc@{}}
\toprule
Algorithm      & Accuracy \\ \midrule
BinaryConnect~\cite{courbariaux2015binaryconnect}   &     $98.71\%$     \\
Full Precision & $98.87\pm 0.04\%$  \\
GD+Proj   &     $74.92\pm 4.83\%$     \\
PGD   & $92.73\pm 0.23\%$         \\
\admmq   & $98.21 \pm 0.16\%$         \\
\admmr   & \textcolor{black}{$97.78 \pm 0.23\%$}        \\ 
\admms   & $98.21 \pm 0.07\%$         \\ \midrule
PGD with pre-training  & $98.55\pm 0.05\%$         \\
\admmq~with pre-training &  $98.55\pm 0.04\%$         \\
\admmr~with pre-training &  $98.61\pm 0.06\%$          \\
\admms~with pre-training &  $98.57\pm 0.04\%$          \\
\bottomrule
\end{tabular}
\caption{Testing accuracies for MNIST dataset}
\label{tab:Mnist-simple-network_pretrain_all}
\end{table}

\begin{table}[H]
\centering
\begin{tabular}{@{}lc@{}}
\toprule
Algorithm      & Accuracy \\ \midrule
Progressive DNN~\cite{ye2019progressive}  &     $93.53\%$     \\
Full Precision & $93.06\%$  \\
GD+Proj   &     $9.86\%$     \\
PGD   & $63.53\%$         \\ 
\admmq   & $81.18\%$         \\ 
\admmr   & $84.87\%$         \\ 
\admms   & $84.72\%$         \\\midrule 
    PGD with pre-training  & $90.47\%$         \\
\admmq~ with pre-training & $90.42\%$         \\
\admmr~ with pre-training  &$90.46\%$          \\
\admms~ with pre-training  &$90.42\%$          \\
\bottomrule
\end{tabular}
\caption{Testing accuracies for CIFAR-10 dataset}
\label{tab:CIFAR-results_retrain_all}
\end{table}

\begin{table}[H]
\centering

\begin{tabular}{@{}ll@{}}
\toprule
Layer Type           & Shape               \\ \midrule
Dropout & $0.2$ \\
Fully Connected $+$ BatchNorm $+$ ReLU & $4096$ \\
Dropout & $0.5$ \\
Fully Connected $+$ BatchNorm $+$ ReLU & $4096$ \\ 
Dropout & $0.5$ \\
Fully Connected $+$ BatchNorm $+$ ReLU & $4096$ \\ 
Dropout & $0.5$ \\
Fully Connected $+$ BatchNorm & $10$ \\ \bottomrule
\end{tabular}
\caption{Model architecture for MNIST dataset.}
\label{tab:net-arch-mnist}

\end{table}

\begin{table}[H]
\centering
\begin{tabular}{@{}cclll@{}}
\toprule
\multicolumn{2}{c}{Algorithm} & \multicolumn{3}{c}{Parameter} \\ \midrule
\multicolumn{2}{c}{\multirow{3}{*}{GD (+ Proj)}} & Learning rate & $10^{-2}$ & $10^{-3}$ \\
\multicolumn{2}{c}{} & Epoch & 80 & 40 \\
\multicolumn{2}{c}{} & Batch-size & 512 & 512 \\ \midrule
\multicolumn{2}{c}{\multirow{3}{*}{PGD}} & Learning rate & $10^{-2}$ & $10^{-3}$ \\
\multicolumn{2}{c}{} & Epoch & 80 & 40 \\
\multicolumn{2}{c}{} & Batch-size & 512 & 512 \\ \midrule
\multicolumn{2}{c}{\multirow{4}{*}{\admmq}} & Learning rate & $10^{-2}$ & $10^{-3}$ \\
\multicolumn{2}{c}{} & Epoch & 80 & 40 \\
\multicolumn{2}{c}{} & Batch-size & 512 & 512 \\
\multicolumn{2}{c}{} & $\rho$ & $10^{-5}$ &  \\ \midrule
\multicolumn{2}{c}{\multirow{5}{*}{\admmr}} & Learning rate & $10^{-2}$ & $10^{-3}$ \\
\multicolumn{2}{c}{} & Epoch & 80 & 40 \\
\multicolumn{2}{c}{} & Batch-size & 512 & 512 \\
\multicolumn{2}{c}{} & $\rho$ & $10^{-5}$ &  \\
\multicolumn{2}{c}{} & $p^{r}_{i}$ & $0.99$ &  \\ \midrule
\multicolumn{2}{c}{\multirow{5}{*}{\admms}} & Learning rate & $10^{-2}$ & $10^{-3}$ \\
\multicolumn{2}{c}{} & Epoch & 80 & 40 \\
\multicolumn{2}{c}{} & Batch-size & 512 & 512 \\
\multicolumn{2}{c}{} & $\rho$ & $10^{-5}$ &  \\
\multicolumn{2}{c}{} & $\beta$ & $10^3$ &  \\ \midrule
\multirow{6}{*}{PGD with pre-training} & \multirow{3}{*}{Pre-training} & Learning rate & $10^{-2}$ & $10^{-3}$ \\
 &  & Epoch & 20 & 20 \\
 &  & Batch-size & 512 & 512 \\ \cmidrule(l){2-5} 
 & \multirow{3}{*}{Binariztion} & Learning rate & $10^{-2}$ & $10^{-3}$ \\
 &  & Epoch & 20 & 20 \\
 &  & Batch-size & 512 & 512 \\ \midrule
\multirow{7}{*}{\admmq~with pre-training} & \multirow{3}{*}{Pre-training} & Learning rate & $10^{-2}$ & $10^{-3}$ \\
 &  & Epoch & 20 & 20 \\
 &  & Batch-size & 512 & 512 \\ \cmidrule(l){2-5} 
 & \multirow{4}{*}{Binariztion} & Learning rate & $10^{-2}$ & $10^{-3}$ \\
 &  & Epoch & 20 & 20 \\
 &  & Batch-size & 512 & 512 \\
 &  & $\rho$ & $10^{-3}$ &  \\ \midrule
\multirow{8}{*}{\admmr~with pre-training} & \multirow{3}{*}{Pre-training} & Learning rate & $10^{-2}$ & $10^{-3}$ \\
 &  & Epoch & 20 & 20 \\
 &  & Batch-size & 512 & 512 \\ \cmidrule(l){2-5} 
 & \multirow{5}{*}{Binariztion} & Learning rate & $10^{-2}$ & $10^{-3}$ \\
 &  & Epoch & 20 & 20 \\
 &  & Batch-size & 512 & 512 \\
 &  & $\rho$ & $10^{-3}$ &  \\
 &  & $p^{r}_{i}$ & $0.3$ &  \\ \midrule
\multirow{8}{*}{\admms~with pre-training} & \multirow{3}{*}{Pre-training} & Learning rate & $10^{-2}$ & $10^{-3}$ \\
 &  & Epoch & 20 & 20 \\
 &  & Batch-size & 512 & 512 \\ \cmidrule(l){2-5} 
 & \multirow{5}{*}{Binariztion} & Learning rate & $10^{-2}$ & $10^{-3}$ \\
 &  & Epoch & 20 & 20 \\
 &  & Batch-size & 512 & 512 \\
 &  & $\rho$ & $10^{-3}$ &  \\
 &  & $\beta$ & $10^3$ &  \\ \bottomrule
\end{tabular}
\caption{Training parameters for MNIST dataset.}
\label{tab:training_para_mnist}
\end{table}


\begin{table}[H]
\centering
\begin{tabular}{@{}ccllllll@{}}
\toprule
\multicolumn{2}{c}{Algorithm} & \multicolumn{6}{c}{Parameter} \\ \midrule
\multicolumn{2}{c}{\multirow{3}{*}{GD (+ Proj)}} & Learning rate & $10^{-2}$ & $10^{-3}$ & $10^{-4}$ &  &  \\
\multicolumn{2}{c}{} & Epoch & 100 & 100 & 100 &  &  \\
\multicolumn{2}{c}{} & Batch-size & 512 & 512 & 512 &  &  \\ \midrule
\multicolumn{2}{c}{\multirow{3}{*}{PGD}} & Learning rate & $10^{-3}$ & $10^{-3}$ & $10^{-3}$ & $10^{-3}$ & $10^{-4}$ \\
\multicolumn{2}{c}{} & Epoch & 200 & 200 & 200 & 200 & 400 \\
\multicolumn{2}{c}{} & Batch-size & 512 & 512 & 512 & 512 & 512 \\ \midrule
\multicolumn{2}{c}{\multirow{4}{*}{\admmq}} & Learning rate & $10^{-3}$ & $10^{-3}$ & $10^{-3}$ & $10^{-3}$ & $10^{-4}$ \\
\multicolumn{2}{c}{} & Epoch & 200 & 200 & 200 & 200 & 400 \\
\multicolumn{2}{c}{} & Batch-size & 512 & 512 & 512 & 512 & 512 \\
\multicolumn{2}{c}{} & $\rho$ & $10^{-5}$ & $10^{-4}$ & $10^{-3}$ & $10^{-2}$ & $10^{-2}$ \\ \midrule
\multicolumn{2}{c}{\multirow{5}{*}{\admmr}} & Learning rate & $10^{-3}$ & $10^{-3}$ & $10^{-3}$ & $10^{-3}$ & $10^{-4}$ \\
\multicolumn{2}{c}{} & Epoch & 200 & 200 & 200 & 200 & 400 \\
\multicolumn{2}{c}{} & Batch-size & 512 & 512 & 512 & 512 & 512 \\
\multicolumn{2}{c}{} & $\rho$ & $10^{-5}$ & $10^{-4}$ & $10^{-3}$ & $10^{-2}$ & $10^{-2}$ \\
\multicolumn{2}{c}{} & $p^{r}_{i}$ & $0.975$ &  &  &  &  \\ \midrule
\multicolumn{2}{c}{\multirow{5}{*}{\admms}} & Learning rate & $10^{-3}$ & $10^{-3}$ & $10^{-3}$ & $10^{-3}$ & $10^{-4}$ \\
\multicolumn{2}{c}{} & Epoch & 200 & 200 & 200 & 200 & 400 \\
\multicolumn{2}{c}{} & Batch-size & 512 & 512 & 512 & 512 & 512 \\
\multicolumn{2}{c}{} & $\rho$ & $10^{-5}$ & $10^{-4}$ & $10^{-3}$ & $10^{-2}$ & $10^{-2}$ \\
\multicolumn{2}{c}{} & $\beta$ & $0.05\rho$ &  &  &  &  \\ \midrule
\multirow{6}{*}{PGD with pre-training} & \multirow{3}{*}{Pre-training} & Learning rate & $10^{-2}$ & $10^{-3}$ & $10^{-4}$ &  &  \\
 &  & Epoch & 100 & 100 & 100 &  &  \\
 &  & Batch-size & 512 & 512 & 512 &  &  \\ \cmidrule(l){2-8} 
 & \multirow{3}{*}{Binariztion} & Learning rate & $10^{-3}$ & $10^{-4}$ & $10^{-5}$ &  &  \\
 &  & Epoch & 250 & 250 & 250 &  &  \\
 &  & Batch-size & 512 & 512 & 512 &  &  \\ \midrule
\multirow{7}{*}{\admmq~with pre-training} & \multirow{3}{*}{Pre-training} & Learning rate & $10^{-2}$ & $10^{-3}$ & $10^{-4}$ &  &  \\
 &  & Epoch & 100 & 100 & 100 &  &  \\
 &  & Batch-size & 512 & 512 & 512 &  &  \\ \cmidrule(l){2-8} 
 & \multirow{4}{*}{Binariztion} & Learning rate & $10^{-3}$ & $10^{-4}$ & $10^{-5}$ &  &  \\
 &  & Epoch & 250 & 250 & 250 &  &  \\
 &  & Batch-size & 512 & 512 & 512 &  &  \\
 &  & $\rho$ & $0.05\rho$ &  &  &  &  \\ \midrule
\multirow{8}{*}{\admmr~with pre-training} & \multirow{3}{*}{Pre-training} & Learning rate & $10^{-2}$ & $10^{-3}$ & $10^{-4}$ &  &  \\
 &  & Epoch & 100 & 100 & 100 &  &  \\
 &  & Batch-size & 512 & 512 & 512 &  &  \\ \cmidrule(l){2-8} 
 & \multirow{5}{*}{Binariztion} & Learning rate & $10^{-3}$ & $10^{-4}$ & $10^{-5}$ &  &  \\
 &  & Epoch & 250 & 250 & 250 &  &  \\
 &  & Batch-size & 512 & 512 & 512 &  &  \\
 &  & $\rho$ & $10^{-2}$ &  &  &  &  \\
 &  & $p^{r}_{i}$ & $0.975$ &  &  &  &  \\ \midrule
\multirow{8}{*}{\admmr~with pre-training} & \multirow{3}{*}{Pre-training} & Learning rate & $10^{-2}$ & $10^{-3}$ & $10^{-4}$ &  &  \\
 &  & Epoch & 100 & 100 & 100 &  &  \\
 &  & Batch-size & 512 & 512 & 512 &  &  \\ \cmidrule(l){2-8} 
 & \multirow{5}{*}{Binariztion} & Learning rate & $10^{-3}$ & $10^{-4}$ & $10^{-5}$ &  &  \\
 &  & Epoch & 250 & 250 & 250 &  &  \\
 &  & Batch-size & 512 & 512 & 512 &  &  \\
 &  & $\rho$ & $10^{-2}$ &  &  &  &  \\
 &  & $\beta$ & $0.02\rho$ &  &  &  &  \\ \bottomrule
\end{tabular}
\caption{Training parameters for CIFAR-10 dataset.}
\label{tab:training_para_cifar-10}
\end{table}
{
\color{black}
\section{Link to the Code}
\label{app:code}
Codes are available at \url{https://github.com/optimization-for-data-driven-science/ADMM-Q}.
}

\end{document}